\numberwithin{equation}{section}
\newtheorem{theorem}{Theorem}[section]
\newtheorem{Ca}[theorem]{Corollary}
\newtheorem{Th}[theorem]{Theorem}
\newtheorem{Lm}[theorem]{Lemma}
\newtheorem{Prop}[theorem]{Proposition}
\newtheorem{Def}[theorem]{Definition}
\newtheorem{Remark}[theorem]{Remark}
\title{Traces of Sobolev spaces to piecewise Ahlfors--David regular sets}
\author{Alexander I. Tyulenev}
\address{Steklov Mathematical Institute of Russian academy of Sciences}
\email{tyulenev-math@yandex.ru,tyulenev@mi-ras.ru}
\begin{document}
\date{\today}
\allowdisplaybreaks
\keywords{Sobolev spaces, metric measure spaces, lower content regular sets, Frostman-type measures}
\subjclass[2010]{53C23, 46E35}
\begin{abstract}
Let $(\operatorname{X},\operatorname{d},\mu)$ be a metric measure space with uniformly locally doubling measure $\mu$. Given $p \in (1,\infty)$, assume that $(\operatorname{X},\operatorname{d},\mu)$
supports a weak local $(1,p)$-Poincar\'e inequality. We characterize trace spaces of the first-order Sobolev $W^{1}_{p}(\operatorname{X})$-spaces  to subsets $S$ of $\operatorname{X}$ that can be represented as a finite union $\cup_{i=1}^{N}S^{i}$, $N \in \mathbb{N}$, of Ahlfors--David regular subsets $S^{i} \subset \operatorname{X}$, $i \in \{1,...,N\}$ of different codimensions. Furthermore, we explicitly compute the corresponding trace norms
up to some universal constants.
\end{abstract}
\maketitle
\tableofcontents
\section*{Introduction}
Let $p \in (1,\infty)$ and let $(\operatorname{X},\operatorname{d},\mu)$ be a metric measure space supporting
a weak local $(1,p)$-Poincar\'e inequality (see Section 2 for details). The problem of traces of Sobolev $W_{p}^{1}(\operatorname{X})$-spaces to different closed subsets $S \subset \operatorname{X}$
has attracted a lot of attention in the recent years \cite{GBIZ,SakSot,Shv1,TV1,TV2,T1} (see also the references therein).
Furthermore, a closely related problem of traces of the first-order Sobolev $W_{p}^{1}(\Omega)$-spaces (initially defined on a domain $\Omega \subset \operatorname{X}$)
to the boundary $\partial \Omega$ is also of great interest \cite{GKS,GS,Maly}.

However, in all above papers it was assumed that $S$ satisfies a some sort of codimensional Ahlfors--David-type regularity condition, i.e., $S \in \mathcal{ADR}_{\theta}(\operatorname{X})$ for some $\theta \geq 0$ (see Definition \ref{Def.Ahlfors_David}).
Unfortunately, given $\theta \geq 0$, the class $\mathcal{ADR}_{\theta}(\operatorname{X})$ is too narrow. For example, one can construct
simple planar rectifiable curves that do not belong to $\mathcal{ADR}_{\theta}(\mathbb{R}^{2})$ for any $\theta \geq 0$ \cite{T2}.

In \cite{T5}, given $\theta \geq 0$, the class of all lower codimension-$\theta$ regular sets $\mathcal{LCR}_{\theta}(\operatorname{X})$ was introduced (see Definition \ref{Def.lower_content}).
Given $\theta \geq 0$, we have $\mathcal{ADR}_{\theta}(\operatorname{X}) \subset \mathcal{LCR}_{\theta}(\operatorname{X})$ \cite{T5}, but typically the class $\mathcal{LCR}_{\theta}(\operatorname{X})$
is much broader than $\mathcal{ADR}_{\theta}(\operatorname{X})$ \cite{TV1,T5}.
Given $\theta \in [0,p)$ and a closed set $S \in \mathcal{LCR}_{\theta}(\operatorname{X})$,
the author obtained in \cite{T5} sharp intrinsic descriptions of traces of $W_{p}^{1}(\operatorname{X})$-spaces to $S$. The results of \cite{T5} cover all previously known results concerning traces of $W_{p}^{1}(\operatorname{X})$-spaces to different subsets $S \subset \operatorname{X}$.
At the same time, due to the high generality, the corresponding criteria
given in \cite{T5} are quite abstract. Indeed, they were based on some special sequences of measures called $\theta$-regular.
The corresponding constructions of those measures given in \cite{T5} were based on some nontrivial techniques including Chryst's dyadic cubes and
the locally $\ast$-weak convergence of measures. This fact makes the corresponding criteria quite difficult
for applications.

In fact, finding explicit simple constructions of $\theta$-regular sequences of measures is a subtle problem.
It is natural to find some particular cases of sets $S \in \mathcal{LCR}_{\theta}(\operatorname{X}) \setminus \mathcal{ADR}_{\theta}(\operatorname{X})$, $\theta \geq 0$ for which
one can easily built the corresponding $\theta$-regular sequences of measures. A natural step in this direction is to consider piecewise
Ahlfors--David regular sets $S$, i.e., sets $S$ that can be represented as a
a finite number of pieces of Ahlfors--David regular sets of different codimensions. More precisely, given $p \in (1,\infty)$, we assume that $S=\cup_{i=1}^{N}S^{i}$, for some $N \in \mathbb{N}$, $N \geq 2$, where for each $i \in \{1,...,N\}$, $S^{i} \in \mathcal{ADR}_{\theta_{i}}(\operatorname{X})$ and $0 \le \theta_{1} < \theta_{2} < ... < \theta_{N} < p$. Given $\theta \in [\theta_{N},p)$, we construct explicit examples
of $\theta$-regular sequences of measures concentrated on $S$ and obtain explicit sharp intrinsic descriptions of traces of Sobolev $W_{p}^{1}(\operatorname{X})$-spaces
to $S$.

As far as we know, the results of the present paper are new and can not be obtained via the previously known methods.
We strongly believe that our results are quite transparent and can be effectively used in boundary value problems for partial differential equations.

In conclusion, we should mention that while the methods of \cite{T5} are capable of dealing with sets $S$ composed of
infinitely many  Ahlfros--David regular pieces of different codimensions, it is difficult to make the corresponding criteria transparent. In the
present paper, we essentially use the fact that $N < +\infty$. Furthermore, the corresponding intermediate constants in the present paper depend essentially on $N$.
Note, however, that in \cite{T2} the author made the first attempt to obtain explicit examples of $1$-regular sequences of measures together with transparent
trace criteria for the case of planar rectifiable curves of positive lengths and without self-intersections.
Clearly, such curves can not be obtained as a finite union of Ahflors--David regular sets in general.

We split the main results of the present paper into \textit{two parts}. The \textit{first part} corresponds to the case when $\theta_{i} > 0$ for all $i \in \{1,...,N\}$.
This case is technically simpler because the set $S$ is porous. In the \textit{second part} we assume that $\theta_{1}=0$ and $\theta_{i} > 0$ for all $i \in \{2,...,N\}$.
This case is more complicated because $S$ is not necessary porous.

\section{Necessary background and auxiliary results}

\subsection{The assumptions}
Throughout the paper we fix a \textit{metric measure space} $\operatorname{X}=(\operatorname{X},\operatorname{d},\mu)$, where $(\operatorname{X},\operatorname{d})$ is a \textit{complete separable}
metric space and $\mu$ is a \textit{Borel regular locally finite outer measure} on $\operatorname{X}$ with $\operatorname{supp}\mu=\operatorname{X}$
satisfying the \textit{uniformly locally doubling condition}, i.e., for each $R > 0$,
\begin{equation}
\label{eqq.doubling}
C_{\mu}(R):=\sup\limits_{r (0,R]}\sup\limits_{x \in \operatorname{X}}\frac{\mu(B_{2r}(x))}{\mu(B_{r}(x))} < +\infty,
\end{equation}
where $B_{r}(x)$ is the closed ball of radius $r$ centered at $x$, i.e.,
\begin{equation}
\label{eqq.closed_balls}
B_{r}(x):=\{y \in \operatorname{X}:\operatorname{d}(x,y) \le r\}.
\end{equation}
Furthermore, by a ball we always mean a \textit{closed ball} $B=B_{r}(x)$ for some $r \geq 0$ and $x \in \operatorname{X}$.
Clearly, if one consider a given ball $B$ just as a subset of $\operatorname{X}$, then it can happen that its center and its radius are not uniquely
determined. Hence, in the sequel we always consider a given ball $B$ together with some fixed center $x_{B}$ and fixed radius $r_{B}$.
%Given a ball $B \subset \operatorname{X}$, we will denote by $r(B)$ its radius.
Given a ball $B=B_{r}(x)$ and a constant $c \geq 0$, we set $cB:=B_{cr}(x)$.

We say that a family $\mathcal{F}$ of subsets of $\operatorname{X}$ is \textit{disjoint} if $F_{1} \cap F_{2} = \emptyset$
for different sets $F_{1},F_{2} \in \mathcal{F}$.

By $\operatorname{LIP}(\operatorname{X})$ we denote the linear space of all real valued Lipschitz functions on $\operatorname{X}$, i.e., $f \in \operatorname{LIP}(\operatorname{X})$
if and only if there is $L_{f} \geq 0$ such that
$$
|f(x)-f(y)| \le L_{f}\operatorname{d}(x,y), \quad (x,y) \in \operatorname{X} \times \operatorname{X}.
$$

By a \textit{measure on} $\operatorname{X}$ we always mean a nonzero Borel regular (outer) measure $\mathfrak{m}$ with $\operatorname{supp}\mathfrak{m} \subset \operatorname{X}$.
We say that $\mathfrak{m}$ is locally finite if $\mathfrak{m}(B_{r}(x)) < +\infty$ for all $x \in \operatorname{X}$ and all $r \in [0,+\infty)$.
Given $p \in [1,\infty)$, by $L_{p}(\mathfrak{m})$ ($L^{loc}_{p}(\mathfrak{m})$) we denote the linear space of all $\mathfrak{m}$-equivalence classes of $p$-integrable (locally $p$-integrable)
functions $f:\operatorname{supp}\mathfrak{m} \to [-\infty,+\infty]$.
Given a Borel regular locally finite (outer) measure $\mathfrak{m}$ on $\operatorname{X}$, for each $f \in L_{1}^{loc}(\mathfrak{m})$, and every Borel set $G \subset \operatorname{X}$
with $\mathfrak{m}(G) < +\infty$, we put
\begin{equation}
f_{G,\mathfrak{m}}:=\fint\limits_{G}f(x)\,d\mathfrak{m}(x):=
\begin{cases}
\frac{1}{\mathfrak{m}(G)}\int\limits_{G}f(x)\,d\mathfrak{m}(x), \quad \mathfrak{m}(G) > 0;\\
0, \quad \mathfrak{m}(G)=0.
\end{cases}
\end{equation}
We also set
\begin{equation}
\label{eqq.different_overagings}
\mathcal{E}_{\mathfrak{m}}(f,G):=\inf_{c \in \mathbb{R}}\fint_{G}|f(x)-c|\,d\mathfrak{m}(x), \quad \mathcal{OSC}_{\mathfrak{m}}(f,G):=\fint_{G}\fint_{G}|f(x)-f(y)|\,d\mathfrak{m}(x)d\mathfrak{m}(y).
\end{equation}
One can easily verify that (see Section 2 in \cite{T5} for the proof)
\begin{equation}
\label{eqq.averaging_comparison}
\mathcal{E}_{\mathfrak{m}}(f,G) \le \mathcal{OSC}_{\mathfrak{m}}(f,G) \le 2 \mathcal{E}_{\mathfrak{m}}(f,G).
\end{equation}

Finally, throughout the paper we \textit{fix a parameter} $p \in (1,\infty)$ and assume that the space $\operatorname{X}$ supports a \textit{weak local $(1,p)$-Poincar\'e inequality}, i.e.,
for any $R > 0$, there are constants $C=C(R) > 0$, $\lambda=\lambda(R) \geq 1$ such that, for each  $f \in \operatorname{LIP}(\operatorname{X})$,
\begin{equation}
\label{eq.Poincare}
\mathcal{E}_{\mu}(f,B_{r}(x)) \le C r \Bigl(\fint\limits_{B_{\lambda r}(x)}(\operatorname{lip}f(y))^{p}\,d\mu(y)\Bigr)^{\frac{1}{p}} \qquad \text{for all} \quad (x,r) \in \operatorname{X} \times (0,R],
\end{equation}
where $\operatorname{lip}f(y):=\varlimsup_{z \to y} |f(y)-f(z)|/\operatorname{d}(y,z)$ provided that $y \in \operatorname{X}$ is an accumulation point and $\operatorname{lip}f(y)=0$ provided that $y$ is an isolated point.

Our assumptions on the space $\operatorname{X}$ are quite typical in the modern Geometric Analysis and imply some nice properties of $\operatorname{X}$.
The reader can consult the beautiful monograph \cite{HKST} for the detailed exposition of metric measure 
spaces satisfying assumptions adopted in this paper.
We have the following result (see Proposition 2.23 in \cite{T5}).
\begin{Prop}
\label{Prop.volume_decay}
For each $R > 0$, there is $Q=Q(R) > 0$ such that the measure $\mu$ has the relative volume decay property of order $Q$, i.e., there exists $C(R,Q) > 0$ such that
\begin{equation}
\label{eqq.decay_1}
\Bigl(\frac{r(\underline{B})}{r(\overline{B})}\Bigr)^{Q} \le C(R,Q) \frac{\mu(\underline{B})}{\mu(\overline{B})} \quad \text{for all balls}
\quad \underline{B} \subset \overline{B} \quad \text{with} \quad 0< r_{\underline{B}} \le r_{\overline{B}} \le R.
\end{equation}
Furthermore, for each $R > 0$, there is $q=q(R) > 0$ such that the measure $\mu$ has the reverse volume decay property of order $q$, i.e., there exists $C(R,q) > 0$ such that
\begin{equation}
\label{eqq.decay_2}
\frac{\mu(\underline{B})}{\mu(\overline{B})} \le C(R,q) \Bigl(\frac{r(\underline{B})}{r(\overline{B})}\Bigr)^{q} \quad \text{for all balls}
\quad \underline{B} \subset \overline{B} \quad \text{with} \quad 0< r_{\underline{B}} \le r_{\overline{B}} \le R.
\end{equation}
\end{Prop}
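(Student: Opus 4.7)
The first inequality is a straightforward iteration of \eqref{eqq.doubling}. Since $x_{\underline{B}} \in \underline{B} \subset \overline{B}$, the triangle inequality gives $\overline{B} \subset B_{2 r_{\overline{B}}}(x_{\underline{B}})$. Let $k \in \mathbb{N}$ be the smallest integer with $2^{k} r_{\underline{B}} \geq 2 r_{\overline{B}}$, so that $k \le \log_{2}(2 r_{\overline{B}}/r_{\underline{B}}) + 1$ and all intermediate radii lie in $(0, 2R]$. Applying \eqref{eqq.doubling} $k$ times yields
\begin{equation*}
\mu(\overline{B}) \le \mu(B_{2^{k} r_{\underline{B}}}(x_{\underline{B}})) \le C_{\mu}(2R)^{k} \mu(\underline{B}),
\end{equation*}
and setting $Q := \log_{2} C_{\mu}(2R)$ rearranges to \eqref{eqq.decay_1}.

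The second inequality is considerably more delicate: it fails for arbitrary doubling measures on metric spaces, and the Poincar\'e assumption \eqref{eq.Poincare} enters only through its geometric consequence that $\operatorname{X}$ is \emph{locally quasiconvex} at scales $\le R$ (a standard fact when combined with $\operatorname{supp}\mu = \operatorname{X}$). I would first establish a \emph{single-scale reverse doubling}: there exists $\delta = \delta(R) \in (0,1)$ such that $\mu(B_{r}(x)) \le (1 - \delta) \mu(B_{2r}(x))$ whenever $0 < r \le R$ and $B_{r}(x) \ne \operatorname{X}$. For this, I would use quasiconvexity to produce a point $y \in B_{2r}(x)$ with $\operatorname{d}(x,y) \geq (3/2)r$; for a suitable $\eta = \eta(R) \in (0, 1/2)$ the ball $B_{\eta r}(y)$ then lies inside $B_{2r}(x) \setminus B_{r}(x)$. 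Since $B_{2r}(x) \subset B_{4r}(y)$, a bounded number of applications of \eqref{eqq.doubling} yields $\mu(B_{\eta r}(y)) \geq c(R) \mu(B_{2r}(x))$, whence the estimate with $\delta := c(R)$.

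Iterating the single-scale step along the concentric dyadic sequence $B_{2^{i} r_{\underline{B}}}(x_{\underline{B}})$, with $n$ maximal such that $2^{n} r_{\underline{B}} \le r_{\overline{B}}$, gives
\begin{equation*}
\mu(\underline{B}) \le C(R) (r_{\underline{B}}/r_{\overline{B}})^{q} \mu(B_{r_{\overline{B}}}(x_{\underline{B}})), \qquad q := -\log_{2}(1 - \delta(R)) > 0.
\end{equation*}
The passage from concentric to non-concentric costs only a bounded constant: the inclusion $B_{r_{\overline{B}}}(x_{\underline{B}}) \subset B_{2 r_{\overline{B}}}(x_{\overline{B}})$ together with one application of \eqref{eqq.doubling} gives $\mu(B_{r_{\overline{B}}}(x_{\underline{B}})) \le C_{\mu}(2R) \mu(\overline{B})$, establishing \eqref{eqq.decay_2}. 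The main obstacle is the single-scale step: the uniform-in-scale extraction of the point $y$ and the uniform control of $\eta(R)$ from \eqref{eq.Poincare} through local quasiconvexity is the heart of the argument, while everything else is routine iteration and bookkeeping. Edge cases (balls coinciding with $\operatorname{X}$, or $r_{\overline{B}} \le 2 r_{\underline{B}}$) are absorbed into the constant $C(R,q)$.
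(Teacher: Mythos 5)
The paper does not prove this proposition here; it is cited as Proposition~2.23 of \cite{T5}. Your argument is correct, and it is the standard one for both halves: iterating the doubling condition for \eqref{eqq.decay_1}, and deriving a single-scale reverse-doubling estimate from the connectivity implied by the Poincar\'e inequality, then iterating, for \eqref{eqq.decay_2}. You also correctly isolate that \eqref{eqq.decay_1} uses only \eqref{eqq.doubling}, while \eqref{eqq.decay_2} genuinely requires the Poincar\'e hypothesis through its geometric consequence.

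Two small points worth tightening. First, in the single-scale step you want the intermediate point $y$ to satisfy $\operatorname{d}(x,y) = \tfrac{3}{2}r$ exactly, not merely $\operatorname{d}(x,y) \geq \tfrac{3}{2}r$: since balls here are \emph{closed}, an upper bound on $\operatorname{d}(x,y)$ is also needed to keep $B_{\eta r}(y)$ inside $B_{2r}(x)$, and only then does $B_{\eta r}(y) \subset B_{2r}(x)\setminus B_{r}(x)$ (for $\eta < \tfrac12$). The exact value is supplied by connectedness alone: if $\operatorname{X}\setminus B_{2r}(x)\neq\emptyset$, continuity of $\operatorname{d}(x,\cdot)$ on the connected space $\operatorname{X}$ gives such a $y$; quasiconvexity (which is what the Poincar\'e inequality directly yields on a complete doubling space) is more than you need but certainly suffices. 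Second, your dispatch of the edge cases is right but could be spelled out: when $r_{\overline{B}} > \operatorname{diam}(\operatorname{X})$ the iteration terminates at scale comparable to $\operatorname{diam}(\operatorname{X})$, and the discrepancy $(\operatorname{diam}(\operatorname{X})/r_{\overline{B}})^{q}$ is bounded below by $(\operatorname{diam}(\operatorname{X})/R)^{q}$, so it is absorbed into $C(R,q)$, as you indicated. With these clarifications the proof is complete.
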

Having at our disposal Proposition \ref{Prop.volume_decay} we let $\underline{Q}_{\mu}$ denote the \textit{infimum} of the set of all $Q$ for which \eqref{eqq.decay_1} holds. Similarly,
we let $\overline{q}_{\mu}$ denote the \textit{supremum} of all $q$ for which \eqref{eqq.decay_2} holds.

\begin{Remark}
\label{Rem.gap_between_exponents}
It is clear that $\overline{q}_{\mu} \le \underline{Q}_{\mu}$. Unfortunately, in many typical situations there is a gap between these exponents, i.e., $\overline{q}_{\mu}$ can be much smaller than
$\underline{Q}_{\mu}$. The reader can find interesting examples illustrating this phenomena in \cite{MarOrt}.
\end{Remark}

Given a set $E \subset \operatorname{X}$, for each $k \in \mathbb{Z}$, by $Z_{k}(E)$ we will denote an arbitrary $2^{-k}$-separated subset of $E$.
Furthermore, by $\mathcal{A}_{k}(E)$ we denote the corresponding index set, i.e., $Z_{k}(E):=\{z_{k,\alpha}:\alpha \in \mathcal{A}_{k}(E)\}$.
We recall the following elementary property (see \cite{T5}).
\begin{Prop}
\label{Prop.overlapping}
Given $c \geq 1$, there exists a constant $C > 0$ such that
\begin{equation}
\sup\limits_{x \in \operatorname{X}}\sup_{k \in \mathbb{N}_{0}}\sum_{\alpha \in \mathcal{A}_{k}(E)}\chi_{cB_{k,\alpha}}(x) \le C.
\end{equation}
\end{Prop}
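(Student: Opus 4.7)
The statement is a standard bounded-overlap lemma for doubling measures, so the plan is to run the usual volume-counting argument, being careful to use only the \emph{uniformly local} doubling property available through \eqref{eqq.doubling}. Throughout, I read the notation as $B_{k,\alpha}:=B_{2^{-k}}(z_{k,\alpha})$, so that $cB_{k,\alpha}=B_{c2^{-k}}(z_{k,\alpha})$.

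Fix $c\geq 1$, $x\in\operatorname{X}$, $k\in\mathbb{N}_{0}$, and set
$$
I:=\{\alpha\in\mathcal{A}_{k}(E): x\in cB_{k,\alpha}\}=\{\alpha:\operatorname{d}(x,z_{k,\alpha})\leq c\,2^{-k}\}.
$$
First I would check that the \emph{small} balls $\widetilde{B}_{\alpha}:=B_{2^{-k-1}}(z_{k,\alpha})$, $\alpha\in I$, are pairwise disjoint: since $Z_{k}(E)$ is $2^{-k}$-separated, the triangle inequality gives $\operatorname{d}(z_{k,\alpha},z_{k,\beta})>2^{-k}$ for $\alpha\neq\beta$, so $\widetilde{B}_{\alpha}\cap\widetilde{B}_{\beta}=\emptyset$. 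Next, fix any $\alpha_{0}\in I$ and observe that for every $\alpha\in I$,
$$
\widetilde{B}_{\alpha}\subset B_{(2c+\frac{1}{2})2^{-k}}(z_{k,\alpha_{0}}),
$$
by the triangle inequality through $x$. Call the large ball on the right $\widehat{B}$; its radius is at most $(2c+\tfrac{1}{2})$ since $k\geq 0$.

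The key quantitative step is to compare $\mu(\widehat{B})$ with $\mu(\widetilde{B}_{\alpha})$. Both balls are centered at points of $\operatorname{X}$ and have radii in $(0,R]$ with $R:=2c+1$, so iterating \eqref{eqq.doubling} a bounded number of times (depending only on $c$) yields a constant $C_{1}=C_{1}(c,C_{\mu}(R))$ with
$$
\mu(\widehat{B})\leq C_{1}\,\mu(\widetilde{B}_{\alpha})\quad\text{for every}\quad\alpha\in I.
$$
Combining pairwise disjointness with this comparison,
$$
(\#I)\,\mu(\widehat{B})\leq C_{1}\sum_{\alpha\in I}\mu(\widetilde{B}_{\alpha})=C_{1}\,\mu\!\Bigl(\bigcup_{\alpha\in I}\widetilde{B}_{\alpha}\Bigr)\leq C_{1}\,\mu(\widehat{B}).
$$
Since $\mu(\widehat{B})>0$ (because $\operatorname{supp}\mu=\operatorname{X}$ and $\mu$ is locally finite, ensuring $0<\mu(\widehat{B})<+\infty$), we conclude $\#I\leq C_{1}$, which is exactly the claimed bound with $C=C_{1}$ independent of $x$ and $k$.

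The only subtle point to watch is the uniformity of the doubling constant: because the radii of all balls appearing in the proof are bounded above by $2c+1$ (thanks to $k\geq 0$), the exponent $R$ used in \eqref{eqq.doubling} is a fixed finite number depending only on $c$, and hence the resulting constant $C$ depends only on $c$ and on $C_{\mu}(2c+1)$. I do not expect any substantive obstacle; the argument is routine once the correct bounded collection of doubling iterations is identified.
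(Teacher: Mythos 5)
Your volume-counting argument is the standard one, and since the paper itself only cites \cite{T5} for this elementary fact rather than reproducing a proof, there is no internal argument to compare against; yours is essentially the canonical one.

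Two small technical points worth tightening. First, ``$2^{-k}$-separated'' is conventionally $\operatorname{d}(z_{k,\alpha},z_{k,\beta})\geq 2^{-k}$ (not strictly $>$), in which case the closed balls $B_{2^{-k-1}}(z_{k,\alpha})$ need not be disjoint (they can meet along a sphere, which for a general doubling $\mu$ may carry positive mass); use radius $2^{-k}/3$ instead, which makes the disjointness an immediate consequence of the triangle inequality. Second, when you compare $\mu(\widehat{B})$ with $\mu(\widetilde{B}_{\alpha})$ you must first re-center: $\widehat{B}\subset B_{(4c+\frac12)2^{-k}}(z_{k,\alpha})$, so the doubling constant you need is $C_{\mu}(R)$ with $R$ of order $4c$, not $2c+1$ as you wrote; since $k\geq 0$ this is still a fixed finite number depending only on $c$, so the conclusion is unaffected. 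With these adjustments the proof is complete and gives $C$ depending only on $c$ and $C_{\mu}(\cdot)$, as required.
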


\subsection{Regular sets}

Since the dependence of $\mu(B_{r}(x))$ on $r$ is not a power of $r$ in general, it is natural to consider
\textit{codimensional substitutions} for the usual Hausdorff contents and measures.
More precisely, following \cite{GKS,GS,Maly,MNS,SakSot}, given $\theta \geq 0$, for each set $E \subset \operatorname{X}$ and any $\delta \in (0,\infty]$, we put
\begin{equation}
\label{eq.cod.content}
\mathcal{H}_{\theta,\delta}(E):=\inf\{\sum \frac{\mu(B_{r_{i}}(x_{i}))}{(r_{i})^{\theta}}:E \subset \bigcup B_{r_{i}}(x_{i}) \text{ and } 0 < r_{i} < \delta\},
\end{equation}
where the infimum is taken over all at most countable coverings of $E$ by balls $\{B_{r_{i}}(x_{i})\}$ with radii $r_{i} \in (0,\delta)$.
Given $\delta > 0$, the mapping $\mathcal{H}_{\theta,\delta}:2^{\operatorname{X}} \to [0,+\infty]$ is called the \textit{codimension-$\theta$ Hausdorff content} at scale $\delta$. We define the \textit{codimension-$\theta$ Hausdorff measure} by the equality
\begin{equation}
\label{eqq.definition_Hausdorff_measure}
\mathcal{H}_{\theta}(E):=\lim_{\delta \to 0}\mathcal{H}_{\theta,\delta}(E).
\end{equation}

\begin{Remark}
\label{Rem.nontriviality_Hausdorff}
Clearly, given $\theta \in [0,\underline{Q}_{\mu})$ the equality $\mathcal{H}_{\theta}(\emptyset)=0$ follows from the existence of a sequence of (closed) balls $\{B_{i}\}:=\{B_{r_{i}}(x_{i})\}_{i=1}^{\infty}$
with radii $r_{i} \to 0$, $i \to \infty$
such that $\mu(B_{i})/(r_{i})^{\theta} \to 0$, $i \to \infty$. As a result, by Theorem 4.2 in \cite{Mat}, in this case $\mathcal{H}_{\theta}:2^{\operatorname{X}} \to [0,+\infty]$ is a Borel regular outer measure on $\operatorname{X}$. Obviously, the inequality $0 \le \theta < \overline{q}_{\mu}$ is sufficient for that. Unfortunately, it is far from being necessary.

The problem of finding an appropriate range of parameters for which $\mathcal{H}_{\theta}$ is a nontrivial
outer measure (i.e., there are nontrivial subsets of finite positive measure) is rather subtle and depends on a concrete structure of a given metric measure space.
The situation is completely transparent for the so-called Ahlfors $Q$-regular space, i.e., when $\mu(B_{r}(x)) \approx r^{Q}$, $r  > 0$, $x \in \operatorname{X}$ for some $Q \geq 0$ (independent on $r$ and $x$).
In this case $\mathcal{H}_{\theta}$ can be considered as a nontrivial outer measure for the range $\theta \in [0,Q)$. In the case $\theta=Q$, the measure
$\mathcal{H}_{Q}$ is a counting measure and $\mathcal{H}_{Q}(E)=+\infty$ for any infinite set $E$.
%If, the measure $\mu$ is Ahlfors can be infinite on generic compact sets, and, hence $\mathcal{H}_{\\theta}$ is not locally finite in general.
\end{Remark}

The following concept, which was actively used in the recent papers \cite{GKS,GS,Maly,MNS}, extends the well-known Ahlfors-David regularity condition to the general
metric measure settings.

\begin{Def}
\label{Def.Ahlfors_David}
Given a parameter $\theta \geq 0$, a closed set $S \subset \operatorname{X}$ is said to be \textit{Ahlfors--David codimension-$\theta$ regular} provided that there exist
constants $\varkappa_{S,1},\varkappa_{S,2} > 0$ such that
\begin{equation}
\label{eqq.Ahlfors_David}
\varkappa_{\theta,1}(S) \frac{\mu(B_{r}(x))}{r^{\theta}} \le \mathcal{H}_{\theta}(B_{r}(x) \cap S) \le \varkappa_{\kappa,2}(S) \frac{\mu(B_{r}(x))}{r^{\theta}} \quad \hbox{for all} \quad (x,r) \in S \times (0,1].
\end{equation}
The class of all Ahlfors--David codimension-$\theta$ regular sets will be denoted by $\mathcal{ADR}_{\theta}(\operatorname{X})$.
\end{Def}

\begin{Remark}
\label{Rem.nontriviality_Ahlfors_David}
It is clear from Remark \ref{Rem.nontriviality_Hausdorff} that if the metric measure space is not Ahlfors $Q$-regular
it is difficult to write down explicitly the range of $\theta \geq 0$ for which $\mathcal{ADR}_{\theta}(\operatorname{X}) \neq \emptyset$.
\end{Remark}

Given a Borel regular (outer) measure $\mathfrak{m}$ on $\operatorname{X}$ and a Borel set $S \subset \operatorname{X}$, by $\mathfrak{m}\lfloor_{S}$ we denote the restriction
of $\mathfrak{m}$ to $S$, i.e., $\mathfrak{m}\lfloor_{S}:=\mathfrak{m}(E \cap S)$ for any Borel set $E \subset \operatorname{X}$. It is well known (see, for example, Lemma 3.3.13 in \cite{HKST})
that $\mathfrak{m}\lfloor_{S}$ is a Borel regular measure on $\operatorname{X}$.

\begin{Remark}
\label{Rem.Ahlfors_doubling}
Note that, given $\theta \geq 0$ and $S \in \mathcal{ADR}_{\theta}(\operatorname{X})$, the restriction
$\mathcal{H}_{\theta} \lfloor_{S}$ of $\mathcal{H}_{\theta}$ to the set $S$ satisfies the uniformly locally doubling condition on $S$, i.e.,
for each $R > 0$,
\begin{equation}
\notag
C_{\theta}(R):=\sup_{r (0,R]}\sup_{x \in S}\frac{\mathcal{H}_{\theta}\lfloor_{S}(B_{2r}(x))}{\mathcal{H}_{\theta}\lfloor{S}(B_{r}(x))}:=\sup_{r (0,R]}\sup_{x \in S}\frac{\mathcal{H}_{\theta}(B_{2r}(x) \cap S)}{\mathcal{H}_{\theta}(B_{r}(x) \cap S)} < +\infty.
\end{equation}

Combining this observation with the Lebesgue differentiation theorem (see Section 3.4 in \cite{HKST}) we deduce that, for each $f \in L_{p}(\mathcal{H}_{\theta}\lfloor_{S})$,
there is a set $E \subset S$ with $\mathcal{H}_{\theta}(E)=0$ such that every $x \in S \setminus E$ is a Lebesgue point of $f$.
\end{Remark}

The following concept is very important in may areas of modern analysis. 
The corresponding literature is huge and we mention only the survey \cite{Shmerkin}.
Furthermore, this concept was crucial in many results concerning traces of function spaces (see \cite{TV1,T5} and the corresponding references therein).

\begin{Def}
\label{Def.porous}
Given a Borel set $S \subset \operatorname{X}$ and a parameter $\sigma \in (0,1]$, we say that a ball $B$ is \textit{$(S,\sigma)$-porous} if
there is a ball $B' \subset B \setminus S$ such that $r(B') \geq \sigma r(B)$.
Furthermore, given $r \in (0,1]$, we put $S_{r}(\sigma):=\{x \in S:B_{r}(x) \text{ is } (S,\sigma)\text{-porous}\}.$
We say that $S$ is $\sigma$-porous if $S=S_{r}(\sigma)$ for all $r \in (0,1]$.
\end{Def}

Now we summarize the basic properties of Ahlfors--David codimension-$\theta$ regular sets.
\begin{Prop}
\label{Prop.measure_plus_porosity}
Let $\theta > 0$ and $S \in \mathcal{ADR}_{\theta}(\operatorname{X})$. Then:
\begin{itemize}
\item[\((1)\)] $\mu(S)=0$;

\item[\((2)\)] there is $\sigma=\sigma(S) \in (0,1]$ depending on $\theta$, $\varkappa_{S,1}$, $\varkappa_{S,2}$ such that $S$ is $\sigma$-porous.
\end{itemize}
\end{Prop}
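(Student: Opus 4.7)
Part (1) follows by exploiting the definition of $\mathcal{H}_{\theta,\delta}$ as an infimum over covers by small balls. Fix $x_{0}\in S$ and $r\in(0,1]$. The upper AD bound gives $\mathcal{H}_{\theta}(B_{r}(x_{0})\cap S)\le \varkappa_{S,2}\mu(B_{r}(x_{0}))/r^{\theta}<+\infty$, so for each $\delta>0$ I choose a cover $\{B_{r_{i}}(x_{i})\}$ of $B_{r}(x_{0})\cap S$ with $r_{i}<\delta$ whose weighted sum $\sum \mu(B_{r_{i}}(x_{i}))/r_{i}^{\theta}$ is within a factor of two of $\mathcal{H}_{\theta,\delta}(B_{r}(x_{0})\cap S)\le \mathcal{H}_{\theta}(B_{r}(x_{0})\cap S)$. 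The crude estimate $\mu(B_{r}(x_{0})\cap S)\le \sum\mu(B_{r_{i}}(x_{i}))\le \delta^{\theta}\sum\mu(B_{r_{i}}(x_{i}))/r_{i}^{\theta}$ then lets me send $\delta\to 0$ and conclude $\mu(B_{r}(x_{0})\cap S)=0$. Covering the separable set $S$ by countably many balls of radius one centered on $S$ upgrades this to $\mu(S)=0$.

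Part (2) I would deduce from the following \emph{neighborhood lemma}: for $x\in S$ and $0<\rho\le r$ with $r$ sufficiently small (say $r\le 1/4$),
\[
\mu(\{y\in B_{r}(x):\operatorname{d}(y,S)\le \rho\})\le C\Bigl(\frac{\rho}{r}\Bigr)^{\theta}\mu(B_{r}(x)),
\]
where $C$ depends only on $\varkappa_{S,1}$, $\varkappa_{S,2}$, $\theta$ and the local doubling constant. To prove the lemma I pick a maximal disjoint family $\{B_{\rho/2}(s_{j})\}$ with $s_{j}\in S\cap B_{r+\rho}(x)$; by maximality $\{B_{\rho}(s_{j})\}$ covers $S\cap B_{r+\rho}(x)$, so $\{B_{2\rho}(s_{j})\}$ covers the claimed $\rho$-neighborhood of $S$ inside $B_{r}(x)$. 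The lower AD bound applied to each $B_{\rho/2}(s_{j})\cap S$, together with disjointness and the upper AD bound on $B_{r+3\rho/2}(x)\cap S$, yields $\sum_{j}\mu(B_{\rho/2}(s_{j}))\le C(\rho/r)^{\theta}\mu(B_{r}(x))$; local doubling converts this into the analogous bound for $\sum_{j}\mu(B_{2\rho}(s_{j}))$, finishing the lemma.

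Given the lemma, porosity is immediate. Suppose $B_{r}(x)$ is not $(S,\sigma)$-porous for some $\sigma\le 1/2$. Then every $y\in B_{(1-\sigma)r}(x)$ satisfies $B_{\sigma r}(y)\subset B_{r}(x)$ and $B_{\sigma r}(y)\cap S\neq\emptyset$, hence $\operatorname{d}(y,S)\le\sigma r$. The neighborhood lemma with $\rho=\sigma r$ gives $\mu(B_{(1-\sigma)r}(x))\le C\sigma^{\theta}\mu(B_{r}(x))$, while Proposition~\ref{Prop.volume_decay} supplies a reverse inequality $\mu(B_{(1-\sigma)r}(x))\ge c_{0}\mu(B_{r}(x))$ with $c_{0}>0$ independent of $r$ and $\sigma$. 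Combining these forces $\sigma\ge(c_{0}/C)^{1/\theta}$; picking $\sigma$ strictly below this threshold (and $\le 1/2$) rules out non-porosity at scales $r\le 1/4$. For $r\in(1/4,1]$ I run the same argument on $B_{r/4}(x)$ and observe that the resulting sub-ball of $\operatorname{X}\setminus S$ still has radius at least $\sigma r/4$, i.e.\ one replaces $\sigma$ by $\sigma/4$ in the final porosity constant.

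The only nontrivial step is the neighborhood lemma: one has to stay within the validity range $(0,1]$ of the AD bounds, iterate doubling a uniformly bounded number of times to pass between the radii $\rho/2$, $\rho$, $2\rho$ and between $r$ and $r+3\rho/2$, and keep the disjoint/cover dichotomy straight. The rest is bookkeeping; the hypothesis $\theta>0$ enters the porosity step exactly because it is needed to make $\sigma^{\theta}\to 0$ contradict the uniform lower bound $c_{0}$.
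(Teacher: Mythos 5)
Your proof of part (1) is the same argument as the paper's (which just takes $\delta\to 0$ in the inequality $\mu(S)\le\delta^{\theta}\mathcal{H}_{\theta,\delta}(S)$), modulo the cosmetic decision to localize to $B_{r}(x_{0})\cap S$ first and then take a countable union.

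For part (2) you take a genuinely different route. The paper argues directly on $2^{-k}$-separated nets inside $B_{r}(x)$, following \cite{JJKR}: assuming every net ball meets $S$, the lower AD bound together with bounded overlap and the upper AD bound yield $2^{k\theta}\mu(B_{r}(x))\le C\,\mu(B_{r}(x))/r^{\theta}$ uniformly in $k$, which is absurd when $\theta>0$, so at some definite scale a net ball must miss $S$. You instead prove a neighborhood-measure estimate $\mu\bigl(\{y\in B_{r}(x):\operatorname{d}(y,S)\le\rho\}\bigr)\le C(\rho/r)^{\theta}\mu(B_{r}(x))$ by a maximal-packing argument and the two AD bounds, then run the standard porosity contradiction: non-porosity at parameter $\sigma$ forces $B_{(1-\sigma)r}(x)$ into the $\sigma r$-neighborhood of $S$, hence $c_{0}\le C\sigma^{\theta}$, pinning $\sigma$ from below. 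I checked the bookkeeping (the radii $\rho/2$, $\rho$, $2\rho$, $r+3\rho/2\le 5r/2$ all stay under $1$ when $r\le 1/4$, and only a bounded number of doublings are needed; the extension to $r\in(1/4,1]$ by working in $B_{r/4}(x)$ at the cost of a factor $1/4$ in the porosity constant is also fine), and the argument is correct. The paper's net argument is slightly more elementary; your neighborhood lemma is a quantitatively stronger intermediate statement (a Minkowski-type content bound on the $\rho$-tube around $S$), which can be reused elsewhere. One small remark: the lower volume bound $\mu(B_{(1-\sigma)r}(x))\ge c_{0}\mu(B_{r}(x))$ needs only the doubling condition \eqref{eqq.doubling} rather than the full force of Proposition \ref{Prop.volume_decay}, since $1-\sigma\ge 1/2$.
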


\begin{proof}
Without loss of generality we may assume that $S$ is bounded and hence, $\mathcal{H}_{\theta}(S) < +\infty$.
To prove (1) it is sufficient to note that, given $\delta > 0$ and a covering $\{B_{j}\}$ of $S$ with radii $r(B_{j}) \in (0,\delta)$, we clearly
have $\sum \mu(B_{j}) \le \delta^{\theta}\sum \mu(B_{j})/(r(B_{j}))^{\theta}$. Hence, $\mu(S) \le \delta^{\theta}\mathcal{H}_{\theta,\delta}(S)$.
Since $\delta > 0$ can be chosen arbitrarily small, the claim follows.

To prove (2) we repeat some standard arguments from \cite{JJKR} given there for the case of Ahlfors $Q$-regular metric measure spaces.
We fix $x \in S$ and $r \in (0,1/8]$. Given $k \in \mathbb{N}_{0}$ with $2^{-k} \le r$, consider the index set $\mathcal{C}_{k}:=\{\alpha \in \mathcal{A}_{k}(\operatorname{X}): z_{k,\alpha} \in B_{r}(x)\}$.
Assume that $B_{k,\alpha} \cap S \neq \emptyset$ for all $\alpha \in \mathcal{C}_{k}$ and, for each $\alpha \in \mathcal{C}_{k}$, 
choose $x_{k,\alpha} \in B_{k,\alpha} \cap S$. 
Clearly, we have the following inclusions
$$
B_{\frac{1}{2^{k}}}(x_{k,\alpha}) \subset 2B_{k,\alpha} \subset B_{\frac{4}{2^{k}}}(x_{k,\alpha}) \subset 8B_{k,\alpha}.
$$
Using the locally uniformly doubling property of $\mu$ and \eqref{eqq.Ahlfors_David}, we have, for each $k \in \mathbb{N}_{0}$ satisfying  $2^{-k} \le r$, for each $\alpha \in \mathcal{C}_{k}$
the following estimates (we recall that $r \in (0,1/8]$)
\begin{equation}
\notag
\begin{split}
&\frac{c_{\theta,1}}{C_{\mu}(1)}2^{k\theta}\mu(B_{k,\alpha}) \le c_{\theta,1}2^{k\theta}\mu(B_{\frac{1}{2^{k}}}(x_{k,\alpha})) \le \mathcal{H}_{\theta}(B_{\frac{1}{2^{k}}}(x_{k,\alpha}) \cap S) \le \mathcal{H}_{\theta}(2B_{k,\alpha} \cap S)\\
&\le \mathcal{H}_{\theta}(B_{\frac{4}{2^{k}}}(x_{k,\alpha}) \cap S) \le c_{\theta,2}2^{k\theta}\mu(B_{\frac{4}{2^{k}}}(x_{k,\alpha})) \le (C_{\mu}(1))^{3}c_{\theta,2}2^{k\theta}\mu(B_{k,\alpha}).
\end{split}
\end{equation}
Combining this observation with Proposition \ref{Prop.overlapping}, Remark \ref{Rem.Ahlfors_doubling} and the right-hand inequality in \eqref{eqq.Ahlfors_David} we deduce (recall again that $r \in (0,1/8]$)
\begin{equation}
\begin{split}
&2^{k\theta}\mu(B_{r}(x))\le \sum\limits_{\alpha \in \mathcal{C}_{k}}2^{k\theta}\mu(2B_{k,\alpha}) \le C \sum\limits_{\alpha \in \mathcal{C}_{k}}\mathcal{H}_{\theta}(2B_{k,\alpha} \cap S )\\
&\le C \mathcal{H}_{\theta}(B_{3r}(x) \cap S) \le C \frac{\mu(B_{r}(x))}{r^{\theta}}.
\end{split}
\end{equation}
Note that $k \in \mathbb{N}$ can be taken arbitrarily large. On the other hand, the constant $C > 0$ in the above inequality does not depend on $k$. This clearly gives a contradiction. 
Hence, there exists $N=N(\theta,C_{\mu}(1),\varkappa_{\theta,1}(S),\varkappa_{\theta,2}(S)) \in \mathbb{N}$ such that for any $k \in \mathbb{N}_{0}$ satisfying $2^{-k} \le \frac{r}{N}$ one can find a ball $B_{k,\alpha}$
whose center belongs to $B_{r}(x)$, but $B_{k,\alpha} \cap S = \emptyset$.
Taking into account that $x \in S$ and $r \in (0,\frac{1}{8}]$ was chosen arbitrarily we put $\sigma = \frac{1}{8N}$ and complete the proof.
\end{proof}

In \cite{T5} the following natural generalization of the Ahlfors--David-type regularity condition was introduced.

\begin{Def}
\label{Def.lower_content}
Given a parameter $\theta \geq 0$,
we say that a set $S \subset \operatorname{X}$
is \textit{lower codimension-$\theta$ content regular} if there exists a constant $\lambda_{S} \in (0,1]$ such that
\begin{equation}
\notag
\mathcal{H}_{\theta,r}(B_{r}(x) \cap S) \geq \lambda_{S} \frac{\mu(B_{r}(x))}{r^{\theta}} \quad \hbox{for all} \quad x \in S \quad \hbox{and all} \quad r \in (0,1].
\end{equation}
By $\mathcal{LCR}_{\theta}(\operatorname{X})$ we denote the family of all lower codimension-$\theta$ content regular subsets of $\operatorname{X}$.
\end{Def}

\begin{Remark}
\label{Rem.2.7}
One can easily show that $\mathcal{ADR}_{\theta}(\operatorname{X}) \subset \mathcal{LCR}_{\theta}(\operatorname{X})$, $\theta \geq 0$ (see Lemma 4.7 in \cite{T5}). Typically, the class $\mathcal{LCR}_{\theta}(\operatorname{X})$ is much more broad than the class $\mathcal{ADR}_{\theta}(\operatorname{X})$ (see the corresponding examples in \cite{TV1,T2}).
\end{Remark}

\begin{Remark}
\label{Rem.2.8}
It is clear that, given $0 \le \theta_{1} \le \theta_{2}$, we have $\mathcal{LCR}_{\theta_{1}}(\operatorname{X}) \subset \mathcal{LCR}_{\theta_{2}}(\operatorname{X})$.
\end{Remark}

\begin{Remark}
\label{Rem.2.9}
It is easy to see that, given $\theta \geq 0$, and arbitrary sets $S^{1}, S^{2} \in \mathcal{LCR}_{\theta}(\operatorname{X})$, the union $S=S^{1} \cup S^{2}$ also
lies in the class $\mathcal{LCR}_{\theta}(\operatorname{X})$.
\end{Remark}

\begin{Def}
\label{Def.piesewise_Ahlfors}
We say that a closed set $S \subset \operatorname{X}$ is pisewise Ahlfors--David regular
if there are numbers $0 \le \theta_{1}(S) < ... < \theta_{N}(S) < \underline{Q}_{\mu}$, $N \in \mathbb{N}$,
and sets $S^{i} \in \mathcal{ADR}_{\theta_{i}(S)}(\operatorname{X})$ such that $S=\cup_{i=1}^{N}S^{i}$.
In this case we put $\theta(S):=\theta_{N}(S)$.

By $\mathcal{PADR}(\operatorname{X})$ we denote the class
of all pisewise Ahlfors--David regular closed sets. Furthermore, given $\theta \geq 0$, we put $\mathcal{PADR}_{\operatorname{\theta}}(\operatorname{X}):=\{S \in \mathcal{PADR}(\operatorname{X}):\theta(S)=\theta\}$.
Clearly, $\mathcal{PADR}(\operatorname{X})=\cup_{\theta \geq 0}\mathcal{PADR}_{\theta}(\operatorname{X})$.
\end{Def}

\begin{Remark}
By Remarks \ref{Rem.2.7}-\ref{Rem.2.9} it is clear that, given $\theta \geq 0$, $\mathcal{PADR}_{\theta}(\operatorname{X}) \subset \mathcal{LCR}_{\theta}(\operatorname{X})$.
\end{Remark}

\begin{Remark}
\label{Rem.measure_plus_porosity}
Given a set $S \in \mathcal{PADR}_{\theta}(\operatorname{X})$, by Proposition \ref{Prop.measure_plus_porosity} we clearly have $\mu(S)=0$ provided that $\theta_{1}(S) > 0$.

Furthermore, if $S = \cup_{i=1}^{N}S^{i}$ is such that $S^{i} \in \mathcal{ADR}_{\theta_{i}(S)}(\operatorname{X})$ with $\theta_{i}(S) > 0$, $i \in \{1,...,N\}$, then $S$
is $\sigma$-porous for some $\sigma=\sigma(S) \in (0,1)$. To prove this fact we proceed as follows.

First of all we claim that if a set $S$ is $\sigma$-porous, then each ball $B$ with $r_{B} \le 1$ is $(S,2\sigma/3)$-porous.
Indeed, let $B=B_{r}(x)$ be an arbitrary ball with $r \le 1$. Consider the ball $B_{\frac{r}{3}}(x)$. If it has an empty intersection with $S$,
then we conclude. If $B_{\frac{r}{3}}(x) \cap S \neq \emptyset$, then, for each $y \in B_{\frac{r}{3}}(x) \cap S$, we have $B_{\frac{r}{3}}(x) \subset B_{\frac{2r}{3}}(y)$.
Taking into account that $B_{\frac{2r}{3}}(y)$ is $(S,\sigma)$-porous (and $\sigma \le \frac{1}{2}$ in this case) we comlete the proof of the claim.

According to Proposition \ref{Prop.measure_plus_porosity},
for each $i \in \{1,...,N\}$, the set $S^{i}$ is $\sigma_{i}:=\sigma_{i}(S^{i})$-porous for some $\sigma_{i} \in (0,1)$.
Consequently, applying the second assertion in Proposition \ref{Prop.measure_plus_porosity} $N$ times in combination with above arguments 
we see that the set $S$ is $\sigma$-porous with $\sigma=\prod_{i=1}^{N}2\sigma_{i}/3$.
\end{Remark}

\subsection{Regular sequences of measures}
Now we recall the crucial tool from \cite{T5} capable of capturing smoothness properties of functions in the trace spaces.

\begin{Def}
\label{Def.regular_sequence}
Given $\theta \geq 0$, we say that a sequence of
measures $\{\mathfrak{m}_{k}\}:=\{\mathfrak{m}_{k}\}_{k=0}^{\infty}$ on $\operatorname{X}$ is $\theta$-regular if there exists $\epsilon=\epsilon(\{\mathfrak{m}_{k}\}) \in (0,1)$ such that
the following conditions are satisfied:
\begin{itemize}
\item[\((\textbf{M}1)\)] there exists a closed nonempty set $S \subset \operatorname{X}$ such that $\operatorname{supp}\mathfrak{m}_{k}=S$ for all $k \in \mathbb{N}_{0}$;

\item[\((\textbf{M}2)\)]
there exists a constant $C_{1} > 0$ such that, for each $k \in \mathbb{N}_{0}$, $\mathfrak{m}_{k}(B_{r}(x)) \le C_{1} \frac{\mu(B_{r}(x))}{r^{\theta}}$ for all $x \in \operatorname{X}$
and all $r \in (0,\epsilon^{k}]$;

\item[\((\textbf{M}3)\)]  there exists a constant $C_{2} > 0$ such that, for each $k \in \mathbb{N}_{0}$, $\mathfrak{m}_{k}(B_{r}(x)) \geq
C_{2}\frac{\mu(B_{r}(x))}{r^{\theta}}$ for all $x \in S$ and all $r \in [\epsilon^{k},1]$;

\item[\((\textbf{M}4)\)]  $\mathfrak{m}_{k}=w_{k}\mathfrak{m}_{0}$ with $w_{k} \in L_{\infty}(\mathfrak{m}_{0})$ for every $k \in \mathbb{N}_{0}$ and, furthermore, there exists a constant
$C_{3} > 0$ such that, for each $k,j \in \mathbb{N}_{0}$, $\epsilon^{\theta j} (C_{3})^{-1} \le w_{k}(x)/w_{k+j}(x) \le C_{3}$ for $\mathfrak{m}_{0}$-a.e. $x \in S$.
\end{itemize}
Furthermore, we say that a $\theta$-regular sequence of measures $\{\mathfrak{m}_{k}\}$ is strongly $\theta$-regular if
\begin{itemize}
\item[\((\textbf{M}5)\)] for each Borel set $E \subset S$, $\varlimsup_{k \to \infty} \mathfrak{m}_{k}(B_{\epsilon^{k}}(\underline{x}) \cap E)/ \mathfrak{m}_{k}(B_{\epsilon^{k}}(\underline{x}))  > 0$ for
$\mathfrak{m}_{0}$-a.e. $\underline{x} \in E$.
\end{itemize}
\end{Def}

Given a closed set $S \subset \operatorname{X}$,
the class of all $\theta$-regular and all strongly $\theta$-regular sequences of measures $\{\mathfrak{m}_{k}\}$ satisfying $\operatorname{supp}\mathfrak{m}_{k}=S$, $k \in \mathbb{N}_{0}$ will be denoted by $\mathfrak{M}_{\theta}(S)$ and $\mathfrak{M}^{str}_{\theta}(S)$, respectively.
%The class of all strongly $\theta$-regular sequences of measures $\{\mathfrak{m}_{k}\}$ satisfying \eqref{M.1} will be denoted by

\begin{Remark}
Let $S$ be a closed nonempty set and let $\theta \geq 0$. It was proved in \cite{T5} that if $S \in \mathcal{LCR}_{\theta}(\operatorname{X})$, then $\mathfrak{M}^{str}_{\theta}(S) \neq \emptyset$. 
On the other hand, if $\mathfrak{M}_{\theta}(S) \neq \emptyset$, then $S \in \mathcal{LCR}_{\theta}(\operatorname{X})$.
\end{Remark}

The following proposition is an easy consequence of Definition \ref{Def.regular_sequence} (see Theorem 5.2 in \cite{T5} for the details). Roughly speaking,
it gives a some sort of doubling properties of measures $\mathfrak{m}_{k}$, $k \in \mathbb{N}_{0}$ on the corresponding scales.

\begin{Prop}
\label{Prop.doubling_property}
Let $\theta \geq 0$, $S \in \mathcal{LCR}_{\theta}(\operatorname{X})$ and $\{\mathfrak{m}_{k}\} \in \mathfrak{M}_{\theta}(\operatorname{X})$. Then, for each $c \geq 1$,
there exists a constant $C > 0$ such that, for each $k \in \mathbb{N}_{0}$,
\begin{equation}
\label{eq.doubling_type}
\frac{1}{C}\mathfrak{m}_{k}(B_{\epsilon^{k}}(y))\le \mathfrak{m}_{k}(B_{\frac{\epsilon^{k}}{c}}(y)) \le \mathfrak{m}_{k}(B_{c\epsilon^{k}}(y)) \le C  \mathfrak{m}_{k}(B_{\epsilon^{k}}(y)) \quad \text{for all} \quad y \in S.
\end{equation}
\end{Prop}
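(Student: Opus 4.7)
\medskip

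\noindent\textbf{Proof proposal.} The middle inequality $\mathfrak{m}_{k}(B_{\epsilon^{k}/c}(y)) \le \mathfrak{m}_{k}(B_{c\epsilon^{k}}(y))$ is monotonicity. The plan is to prove the remaining two inequalities by first sandwiching $\mathfrak{m}_{k}(B_{\epsilon^{k}}(y))$ between constant multiples of $\mu(B_{\epsilon^{k}}(y))/\epsilon^{k\theta}$ via (\textbf{M}2) and (\textbf{M}3) applied at the borderline scale $r=\epsilon^{k}$, and then transferring this equivalence to the slightly smaller scale $\epsilon^{k}/c$ and the slightly larger scale $c\epsilon^{k}$. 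The doubling of $\mu$ (from \eqref{eqq.doubling}) will let us pass between $\mu$-measures of concentric balls of comparable radii with constants depending only on $c$ and $C_{\mu}(1)$.

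For the upper estimate $\mathfrak{m}_{k}(B_{c\epsilon^{k}}(y)) \le C\,\mathfrak{m}_{k}(B_{\epsilon^{k}}(y))$, I would proceed by a covering argument. Pick a maximal $\epsilon^{k}$-separated net $\{z_{\alpha}\} \subset B_{c\epsilon^{k}}(y) \cap S$; by maximality the balls $B_{\epsilon^{k}}(z_{\alpha})$ cover $B_{c\epsilon^{k}}(y) \cap S$, while the half-radius balls $B_{\epsilon^{k}/2}(z_{\alpha})$ are pairwise disjoint and contained in $B_{(c+1)\epsilon^{k}}(y)$. Since $\operatorname{supp}\mathfrak{m}_{k} = S$, using (\textbf{M}2) at scale $r=\epsilon^{k}$ and then $\mu$-doubling,
\begin{equation*}
\mathfrak{m}_{k}(B_{c\epsilon^{k}}(y)) \le \sum_{\alpha} \mathfrak{m}_{k}(B_{\epsilon^{k}}(z_{\alpha})) \le \frac{C_{1}}{\epsilon^{k\theta}} \sum_{\alpha} \mu(B_{\epsilon^{k}}(z_{\alpha})) \le \frac{C}{\epsilon^{k\theta}}\,\mu(B_{\epsilon^{k}}(y)),
\end{equation*}
with $C$ depending on $c$ and $C_{\mu}(1)$. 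Applying (\textbf{M}3) at $r=\epsilon^{k}$ to convert the right-hand side back into $\mathfrak{m}_{k}(B_{\epsilon^{k}}(y))$ finishes this part.

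The more delicate inequality is $\mathfrak{m}_{k}(B_{\epsilon^{k}}(y)) \le C\,\mathfrak{m}_{k}(B_{\epsilon^{k}/c}(y))$, and this is where I expect the real work. The obstacle is that (\textbf{M}3) only provides a lower bound at radii $\ge \epsilon^{k}$, so we cannot directly estimate $\mathfrak{m}_{k}(B_{\epsilon^{k}/c}(y))$ from below using (\textbf{M}3) for the index $k$. The trick I would use is (\textbf{M}4): choose $j=j(c) \in \mathbb{N}$ so that $\epsilon^{k+j} \le \epsilon^{k}/c$, which forces $\epsilon^{j} \asymp 1/c$ and hence $\epsilon^{\theta j} \asymp c^{-\theta}$. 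Then (\textbf{M}3) applied to $\mathfrak{m}_{k+j}$ at the admissible radius $\epsilon^{k}/c$ yields $\mathfrak{m}_{k+j}(B_{\epsilon^{k}/c}(y)) \ge C_{2}\,c^{\theta}\,\mu(B_{\epsilon^{k}/c}(y))/\epsilon^{k\theta}$. Invoking (\textbf{M}4) to compare densities, $\mathfrak{m}_{k} \ge (\epsilon^{\theta j}/C_{3})\,\mathfrak{m}_{k+j}$, the factors $\epsilon^{\theta j}$ and $c^{\theta}$ cancel, leaving
\begin{equation*}
\mathfrak{m}_{k}(B_{\epsilon^{k}/c}(y)) \ge c'\,\frac{\mu(B_{\epsilon^{k}/c}(y))}{\epsilon^{k\theta}} \ge c''\,\frac{\mu(B_{\epsilon^{k}}(y))}{\epsilon^{k\theta}}
\end{equation*}
by $\mu$-doubling, and (\textbf{M}2) at $r=\epsilon^{k}$ then bounds the last quantity below by a constant times $\mathfrak{m}_{k}(B_{\epsilon^{k}}(y))$.

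The principal subtlety, as noted, is the quantitative balance between the shift factor $\epsilon^{\theta j}$ produced by (\textbf{M}4) and the $r^{-\theta}$ appearing in (\textbf{M}3); the codimension parameter $\theta$ must appear consistently in both, which is exactly why (\textbf{M}4) is stated with the precise exponent $\epsilon^{\theta j}$ rather than just qualitative absolute continuity. Collecting the three paragraphs with a final constant $C=C(c,\epsilon,\theta,C_{1},C_{2},C_{3},C_{\mu}(1))$ yields \eqref{eq.doubling_type}.
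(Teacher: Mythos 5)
The paper does not give its own proof of Proposition \ref{Prop.doubling_property}: it defers to Theorem 5.2 of \cite{T5}. So there is no in-paper argument to compare against, and your proposal has to stand on its own merits. It does.

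Your decomposition into three inequalities is the natural one, and the upper bound via a maximal $\epsilon^{k}$-separated net in $B_{c\epsilon^{k}}(y)\cap S$, bounded overlap, and (\textbf{M}2)/(\textbf{M}3) at the borderline radius $r=\epsilon^{k}$ is correct. More importantly, you correctly identified the real obstacle in the lower bound: (\textbf{M}3) gives no information about $\mathfrak{m}_{k}$ at radii below $\epsilon^{k}$, so one must apply (\textbf{M}3) to a later measure $\mathfrak{m}_{k+j}$ at the now-admissible radius $\epsilon^{k}/c$ and then transfer back to $\mathfrak{m}_{k}$ using (\textbf{M}4). The arithmetic works: with $j$ minimal such that $\epsilon^{j}\le 1/c$, one has $\epsilon^{\theta j}c^{\theta}\in(\epsilon^{\theta},1]$, so the shift factor $\epsilon^{\theta j}/C_{3}$ from (\textbf{M}4) and the gain $c^{\theta}$ from $(\epsilon^{k}/c)^{-\theta}$ combine into a harmless constant, and a final application of (\textbf{M}2) at $r=\epsilon^{k}$ closes the loop. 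Your remark that this is precisely why (\textbf{M}4) is stated with the quantitative exponent $\epsilon^{\theta j}$ rather than mere mutual absolute continuity is exactly the point. One trivial cosmetic note: closed balls $B_{\epsilon^{k}/2}(z_{\alpha})$ coming from an $\epsilon^{k}$-separated net can meet at boundary points; use radius $\epsilon^{k}/3$, or simply invoke bounded overlap of $\{B_{\epsilon^{k}}(z_{\alpha})\}$ directly (as in Proposition \ref{Prop.overlapping}), to make the disjointness step airtight. This does not affect the correctness of the argument.
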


Using the above proposition we deduce the following simple but important estimate.

\begin{Prop}
\label{Prop.oscillation_comparison}
Let $\theta \geq 0$, $S \in \mathcal{LCR}_{\theta}(\operatorname{X})$ and $\{\mathfrak{m}_{k}\} \in \mathfrak{M}_{\theta}(\operatorname{X})$. Then, for each $c_{1},c_{2} \geq 1$,
there exists a constant $C > 0$ such that, for each $k \in \mathbb{N}_{0}$, the following inequality
\begin{equation}
\label{eq.oscillation_comparison}
\mathcal{E}_{\mathfrak{m}_{k}}(f,B_{c_{1}\epsilon^{k}}(x)) \le C \mathcal{E}_{\mathfrak{m}_{k}}(f,B_{c_{2}\epsilon^{k}}(y))
\end{equation}
holds for any balls $B_{\epsilon^{k}}(x)$, $B_{\epsilon^{k}}(y)$ with $x \in S$, $y \in \operatorname{X}$ satisfying $B_{c_{1}\epsilon^{k}}(x) \subset B_{c_{2}\epsilon^{k}}(y)$.
\end{Prop}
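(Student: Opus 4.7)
The plan is to reduce \eqref{eq.oscillation_comparison} to a comparison of the measures $\mathfrak{m}_{k}$ on the two balls, and then to apply Proposition \ref{Prop.doubling_property} after re-centering. Set $B := B_{c_{1}\epsilon^{k}}(x)$ and $B' := B_{c_{2}\epsilon^{k}}(y)$ with $B \subset B'$ and $x \in S$. For any $c^{\ast} \in \mathbb{R}$ one has the trivial bound
\[
\mathcal{E}_{\mathfrak{m}_{k}}(f,B) \le \fint_{B}|f-c^{\ast}|\,d\mathfrak{m}_{k} \le \frac{\mathfrak{m}_{k}(B')}{\mathfrak{m}_{k}(B)}\fint_{B'}|f-c^{\ast}|\,d\mathfrak{m}_{k}.
\]
Choosing $c^{\ast}$ to be a near-minimizer for the definition of $\mathcal{E}_{\mathfrak{m}_{k}}(f,B')$ and letting the approximation error tend to zero, the desired estimate follows once we establish
\[
\frac{\mathfrak{m}_{k}(B')}{\mathfrak{m}_{k}(B)} \le C
\]
with a constant $C$ depending only on $c_{1}$, $c_{2}$, and the doubling data of $\{\mathfrak{m}_{k}\}$.

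For the measure comparison I would use the hypothesis $x \in S$ to re-center. Since $x \in B \subset B'$, we have $\operatorname{d}(x,y)\le c_{2}\epsilon^{k}$, so $B' \subset B_{2c_{2}\epsilon^{k}}(x)$. Because $c_{1}\ge 1$, monotonicity gives $\mathfrak{m}_{k}(B) \ge \mathfrak{m}_{k}(B_{\epsilon^{k}}(x))$. Now both balls $B_{\epsilon^{k}}(x)$ and $B_{2c_{2}\epsilon^{k}}(x)$ are centered at the point $x \in S$, so Proposition \ref{Prop.doubling_property} (applied with the constant $c=2c_{2}$) yields
\[
\mathfrak{m}_{k}(B') \le \mathfrak{m}_{k}(B_{2c_{2}\epsilon^{k}}(x)) \le C\,\mathfrak{m}_{k}(B_{\epsilon^{k}}(x)) \le C\,\mathfrak{m}_{k}(B).
\]
Chaining with the previous display delivers \eqref{eq.oscillation_comparison}.

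The only subtle point is that Proposition \ref{Prop.doubling_property} requires the center of the ball to lie in $S$, whereas here $y$ is only assumed to be in $\operatorname{X}$. The re-centering step above is precisely the device that handles this: we use the containment $B \subset B'$ together with the fact that $x \in S$ to replace the uncontrolled center $y$ by the good center $x$. Once this is observed, the rest is routine.
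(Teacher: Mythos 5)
Your proof is correct and relies on the same key ingredient as the paper's: re-centering the larger ball at the point $x \in S$ via $B_{c_{2}\epsilon^{k}}(y) \subset B_{2c_{2}\epsilon^{k}}(x)$ so that Proposition \ref{Prop.doubling_property} becomes applicable, then concluding from the measure comparison $\mathfrak{m}_{k}(B') \le C\,\mathfrak{m}_{k}(B)$. The only cosmetic difference is that the paper routes the measure comparison through $\mathcal{OSC}_{\mathfrak{m}_{k}}$ and then invokes \eqref{eqq.averaging_comparison}, whereas you work directly with $\mathcal{E}_{\mathfrak{m}_{k}}$ by plugging a near-minimizing constant into the larger ball; your version is marginally more streamlined but not substantively different.
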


\begin{proof}
We fix a number $k \in \mathbb{N}_{0}$ and closed balls $B_{\epsilon^{k}}(x)$, $B_{\epsilon^{k}}(y)$ with $x \in S$, $y \in \operatorname{X}$ such that $B_{c_{1}\epsilon^{k}}(x) \subset B_{c_{2}\epsilon^{k}}(y)$. Clearly, $B_{c_{2}\epsilon^{k}}(y) \subset B_{2c_{2}\epsilon^{k}}(x)$. By Proposition \ref{Prop.doubling_property}, we obtain
\begin{equation}
\notag
\mathfrak{m}_{k}(B_{c_{1}\epsilon^{k}}(x)) \le \mathfrak{m}_{k}(B_{c_{2}\epsilon^{k}}(y)) \le \mathfrak{m}_{k}(B_{2c_{2}\epsilon^{k}}(x)) \le C\mathfrak{m}_{k}(B_{\epsilon^{k}}(x)) \le C\mathfrak{m}_{k}(B_{c_{1}\epsilon^{k}}(x)).
\end{equation}
Combining this estimate with \eqref{eqq.different_overagings} we get
\begin{equation}
%\label{eq.oscillation_comparison}
\notag
\mathcal{OSC}_{\mathfrak{m}_{k}}(f,B_{c_{1}\epsilon^{k}}(x)) \le C \mathcal{OSC}_{\mathfrak{m}_{k}}(f,B_{c_{2}\epsilon^{k}}(y)).
\end{equation}
Hence, taking into account \eqref{eqq.averaging_comparison} we obtain the required estimate and complete the proof.
\end{proof}

\begin{Prop}
\label{Prop.special_regular_measures}
Let $\underline{\theta} \in [0,\underline{Q}_{\mu})$ and $S \in \mathcal{ADR}_{\underline{\theta}}(\operatorname{X})$. Then, for each $\theta \in [\underline{\theta},\underline{Q}_{\mu})$,
the sequence $\{2^{k(\theta-\underline{\theta})}\mathcal{H}_{\underline{\theta}}\lfloor_{S}\} \in \mathfrak{M}_{\theta}^{str}(S)$ with $\epsilon(\{2^{k(\theta-\underline{\theta})}\mathcal{H}_{\underline{\theta}}\lfloor_{S}\})=1/2$.
\end{Prop}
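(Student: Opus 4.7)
The plan is to verify the five conditions $(\textbf{M}1)$--$(\textbf{M}5)$ of Definition \ref{Def.regular_sequence} for the sequence $\mathfrak{m}_{k} := 2^{k(\theta-\underline{\theta})} \mathcal{H}_{\underline{\theta}}\lfloor_{S}$ with $\epsilon = 1/2$. Since $\mathcal{H}_{\underline{\theta}}\lfloor_{S}$ is a rescaling of $\mathfrak{m}_{0}$ by the positive scalar $2^{k(\theta-\underline{\theta})}$, all the supports agree with $S$, giving $(\textbf{M}1)$ immediately.

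For $(\textbf{M}2)$, I would bound $\mathfrak{m}_{k}(B_{r}(x))$ for arbitrary $x \in \operatorname{X}$ and $r \in (0, 2^{-k}]$. If $B_{r}(x) \cap S = \emptyset$ the estimate is trivial; otherwise I pick $y \in B_{r}(x) \cap S$, use $B_{r}(x) \cap S \subset B_{2r}(y) \cap S$, and apply the Ahlfors--David upper bound together with doubling of $\mu$ to get $\mathcal{H}_{\underline{\theta}}(B_{r}(x) \cap S) \lesssim \mu(B_{r}(x)) / r^{\underline{\theta}}$. Multiplying by $2^{k(\theta - \underline{\theta})}$ and using $r \leq 2^{-k}$ to absorb $r^{\theta - \underline{\theta}} \leq 2^{-k(\theta-\underline{\theta})}$ yields $\mathfrak{m}_{k}(B_{r}(x)) \lesssim \mu(B_{r}(x))/r^{\theta}$. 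For $(\textbf{M}3)$, since $x \in S$ and $r \in (0,1]$, I apply the lower Ahlfors--David bound directly; the constraint $r \geq 2^{-k}$ now gives $r^{\theta - \underline{\theta}} \geq 2^{-k(\theta-\underline{\theta})}$, and the factor $2^{k(\theta-\underline{\theta})}$ cancels the excess power to produce the required lower estimate $\mathfrak{m}_{k}(B_{r}(x)) \gtrsim \mu(B_{r}(x))/r^{\theta}$.

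For $(\textbf{M}4)$, the weights $w_{k} \equiv 2^{k(\theta - \underline{\theta})}$ are constants, so $w_{k}/w_{k+j} = 2^{-j(\theta - \underline{\theta})}$. Since $\theta \geq \underline{\theta} \geq 0$, one has $2^{-j(\theta - \underline{\theta})} \leq 1$ (upper bound) and $2^{-j(\theta - \underline{\theta})} \geq 2^{-j\theta} = \epsilon^{j\theta}$ (lower bound), so $C_{3} = 1$ works. For $(\textbf{M}5)$, I would invoke the doubling property of $\mathcal{H}_{\underline{\theta}}\lfloor_{S}$ on $S$ recorded in Remark \ref{Rem.Ahlfors_doubling}, which makes the Lebesgue differentiation theorem available on $(S, \operatorname{d}, \mathcal{H}_{\underline{\theta}}\lfloor_{S})$. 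Applying it to $\chi_{E}$ gives, for $\mathcal{H}_{\underline{\theta}}\lfloor_{S}$-a.e.\ $\underline{x} \in E$,
\[
\lim_{k \to \infty} \frac{\mathfrak{m}_{k}(B_{2^{-k}}(\underline{x}) \cap E)}{\mathfrak{m}_{k}(B_{2^{-k}}(\underline{x}))} = \lim_{k \to \infty} \frac{\mathcal{H}_{\underline{\theta}}(B_{2^{-k}}(\underline{x}) \cap E)}{\mathcal{H}_{\underline{\theta}}(B_{2^{-k}}(\underline{x}) \cap S)} = 1,
\]
which in particular is strictly positive.

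I expect no real obstacle: every step is essentially an algebraic matching of scales. The only mildly delicate point is $(\textbf{M}2)$ when $x \notin S$, which requires shifting the center to a point of $S$ and then paying a constant via doubling of $\mu$; this is where one uses that the Ahlfors--David upper bound is an assumption only for centers in $S$. The choice $\epsilon = 1/2$ is precisely what makes the scale cut-off $r \lessgtr 2^{-k}$ match the weight factor $2^{k(\theta - \underline{\theta})}$ in both directions.
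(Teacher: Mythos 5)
Your proposal is correct and follows essentially the same approach as the paper: the paper simply observes that $(\textbf{M}1)$--$(\textbf{M}4)$ follow immediately from Definitions \ref{Def.Ahlfors_David} and \ref{Def.regular_sequence} and then verifies $(\textbf{M}5)$ via the Lebesgue differentiation theorem on the doubling measure space $(S,\operatorname{d},\mathcal{H}_{\underline{\theta}}\lfloor_{S})$, which is exactly what you do, just with the routine scale-matching for $(\textbf{M}2)$--$(\textbf{M}4)$ spelled out.
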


\begin{proof}
The fact that the sequence $\{2^{k(\theta-\underline{\theta})}\mathcal{H}_{\theta}\lfloor_{S}\}$ lies in $\mathfrak{M}_{\theta}(S)$ follows immediately from Definitions \ref{Def.Ahlfors_David} and \ref{Def.regular_sequence}. To verify condition (\textbf{M}5) in Definition \ref{Def.regular_sequence} we note that in fact a stronger condition holds. More precisely, given
a Borel set $E \subset \operatorname{X}$,
\begin{equation}
\label{eqq.1.7}
\lim\limits_{k \to \infty} \frac{\mathcal{H}_{\theta}\lfloor_{S}(B_{2^{-k}}(\underline{x}) \cap E)}{\mathcal{H}_{\theta}\lfloor_{S}(B_{2^{-k}}(\underline{x}))}  = 1 \quad \text{for} \quad
\mathcal{H}_{\theta}-\text{a.e.} \quad \underline{x} \in E.
\end{equation}
In order to verify \eqref{eqq.1.7} it is sufficient to use Remark \ref{Rem.Ahlfors_doubling} and apply the classical arguments
given in Section 3.4 in \cite{HKST}.
\end{proof}

\begin{Remark}
\label{Rem.change_variables}
By Proposition \ref{Prop.doubling_property}, given $\theta \geq 0$, $S \in \mathcal{LCR}_{\theta}(\operatorname{X})$, $\{\mathfrak{m}_{k}\} \in \mathfrak{M}_{\theta}(\operatorname{X})$ and $c \geq 1$, it follows that there exists a constant $C > 0$ such that, for each $k \in \mathbb{N}_{0}$, (see Proposition 5.3 in \cite{T5} for details)
\begin{equation}
\int\limits_{B_{c\epsilon^{k}}(z)}\frac{1}{\mathfrak{m}_{k}(B_{c\epsilon^{k}}(y))}\,d\mathfrak{m}_{k}(y) \le C \quad \text{for all} \quad z \in S.
\end{equation}

In particular, keeping in mind Proposition \ref{Prop.special_regular_measures}, we obtain that, given $\theta \geq 0$, $S \in \mathcal{ADR}_{\theta}(\operatorname{X})$ and $c \geq 1$, there is a constant $C > 0$ such that, for each $k \in \mathbb{N}_{0}$,
%we immediately have
\begin{equation}
\int\limits_{B_{\frac{c}{2^{k}}}(z) \cap S}\frac{1}{\mathcal{H}_{\theta}(B_{\frac{c}{2^{k}}}(y) \cap S)}d\mathcal{H}_{\theta}(y) \le C.
\end{equation}
\end{Remark}

\begin{Lm}
\label{Lm.estimate_by_lp_norms}
Let $\theta \geq 0$, $S \in \mathcal{LCR}_{\theta}(\operatorname{X})$ and $\{\mathfrak{m}_{k}\} \in \mathfrak{M}_{\theta}(\operatorname{X})$. Then, for each $L \in \mathbb{N}_{0}$,
there is a constant $C > 0$ (depending on $L$) such that
\begin{equation}
\sum\limits_{k=0}^{L}\int\limits_{S}\Bigl(\mathcal{E}_{\mathfrak{m}_{k}}(f,B_{\epsilon^{k}}(x))\Bigr)^{p}\,d\mathfrak{m}_{k}(x) \le C \|f|L_{p}(\mathfrak{m}_{0})\|^{p}.
\end{equation}
\end{Lm}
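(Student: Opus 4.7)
The plan is to chain together three simple ingredients: the trivial bound on $\mathcal{E}_{\mathfrak{m}_{k}}$, a Fubini argument controlled by Remark \ref{Rem.change_variables}, and the absolute continuity property (\textbf{M}4) to pass from $L_{p}(\mathfrak{m}_{k})$ to $L_{p}(\mathfrak{m}_{0})$.

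First, I would fix $k \in \{0,\ldots,L\}$ and $x \in S$ and estimate
\[
\mathcal{E}_{\mathfrak{m}_{k}}(f,B_{\epsilon^{k}}(x)) \le \fint_{B_{\epsilon^{k}}(x)} |f(y)|\,d\mathfrak{m}_{k}(y)
\]
by taking the admissible constant $c = 0$ in the infimum defining $\mathcal{E}_{\mathfrak{m}_{k}}$. Raising to the $p$-th power and applying Jensen's inequality to the convex function $t \mapsto |t|^{p}$ yields
\[
\bigl(\mathcal{E}_{\mathfrak{m}_{k}}(f,B_{\epsilon^{k}}(x))\bigr)^{p} \le \fint_{B_{\epsilon^{k}}(x)} |f(y)|^{p}\,d\mathfrak{m}_{k}(y).
\]

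Next, I would integrate over $x \in S$ with respect to $\mathfrak{m}_{k}$ and apply Fubini's theorem, rewriting the resulting double integral as
\[
\int_{S}\int_{B_{\epsilon^{k}}(x) \cap S}\frac{|f(y)|^{p}}{\mathfrak{m}_{k}(B_{\epsilon^{k}}(x))}\,d\mathfrak{m}_{k}(y)\,d\mathfrak{m}_{k}(x) = \int_{S}|f(y)|^{p}\Bigl(\int_{B_{\epsilon^{k}}(y) \cap S}\frac{d\mathfrak{m}_{k}(x)}{\mathfrak{m}_{k}(B_{\epsilon^{k}}(x))}\Bigr)d\mathfrak{m}_{k}(y),
\]
using the symmetric condition $\operatorname{d}(x,y)\le \epsilon^{k}$. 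Now Remark \ref{Rem.change_variables} (applied with $c=1$ and $z = y \in S$) bounds the inner integral by a universal constant, so
\[
\int_{S}\bigl(\mathcal{E}_{\mathfrak{m}_{k}}(f,B_{\epsilon^{k}}(x))\bigr)^{p}\,d\mathfrak{m}_{k}(x) \le C \int_{S}|f(y)|^{p}\,d\mathfrak{m}_{k}(y).
\]

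Finally, to convert the right-hand side to an $L_{p}(\mathfrak{m}_{0})$-norm I would invoke condition (\textbf{M}4) of Definition \ref{Def.regular_sequence}. Taking $k=0$ in $\mathfrak{m}_{k} = w_{k}\mathfrak{m}_{0}$ forces $w_{0} = 1$ $\mathfrak{m}_{0}$-a.e., and then the right-hand inequality in (\textbf{M}4) with $k=0$, $j=k$ gives $w_{k} \le C_{3}$ $\mathfrak{m}_{0}$-a.e. Hence $\int_{S}|f|^{p}\,d\mathfrak{m}_{k} = \int_{S}|f|^{p}w_{k}\,d\mathfrak{m}_{0} \le C_{3}\|f|L_{p}(\mathfrak{m}_{0})\|^{p}$. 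Summing over $k \in \{0,\ldots,L\}$ collects a factor $L+1$ into the final constant $C = C(L)$, completing the argument.

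There is no real obstacle here; the statement is a warm-up estimate combining the $L_{p}$-Jensen bound with Remark \ref{Rem.change_variables} and the uniform bound on the densities $w_{k}$ provided by (\textbf{M}4). The only point requiring a little care is the Fubini step, where one must recognize the symmetric dependence $\chi_{\{\operatorname{d}(x,y)\le \epsilon^{k}\}}$ in order to identify the inner integral with the one appearing in Remark \ref{Rem.change_variables}.
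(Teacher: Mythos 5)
Your argument follows the same route as the paper's: Jensen/H\"older to pass from $\mathcal{E}_{\mathfrak{m}_{k}}(f,B_{\epsilon^{k}}(x))$ to an $L_{p}$-average, then Fubini together with Remark \ref{Rem.change_variables} to integrate out the $x$-variable, and finally a comparison of $\mathfrak{m}_{k}$ with $\mathfrak{m}_{0}$ (via (\textbf{M}4)) and summation over $k\le L$. You actually make the last step explicit, which the paper leaves implicit, and that is the step where you have a small slip: from (\textbf{M}4) with $k=0$ and $j=k$ one has $\epsilon^{\theta k}(C_{3})^{-1}\le w_{0}(x)/w_{k}(x)\le C_{3}$; the \emph{right}-hand inequality $w_{0}/w_{k}\le C_{3}$ yields a \emph{lower} bound $w_{k}\ge 1/C_{3}$, and the upper bound comes from the \emph{left}-hand inequality, giving $w_{k}\le C_{3}\epsilon^{-\theta k}$, which is not uniformly $\le C_{3}$ unless $\theta=0$. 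Since you only need $k\in\{0,\dots,L\}$, the factor $\epsilon^{-\theta k}\le\epsilon^{-\theta L}$ is harmless and gets absorbed into $C=C(L)$, so the proof goes through; just correct the attribution and the stated bound on $w_{k}$.
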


\begin{proof}
By H\"older's inequality, \eqref{eqq.different_overagings} and \eqref{eqq.different_overagings} we have 
$$
\Bigl(\mathcal{E}_{\mathfrak{m}_{k}}(f,B_{\epsilon^{k}}(x))\Bigr)^{p} \le C \fint_{B_{\epsilon^{k}}(x)}|f(y)|^{p}\,d\mathfrak{m}_{k}(y).
$$
Hence, changing the order of integration and taking into account Remark \ref{Rem.change_variables} we obtain, for each $k \in \{0,...,L\}$,
\begin{equation}
\notag
\begin{split}
&\int\limits_{S}\Bigl(\mathcal{E}_{\mathfrak{m}_{k}}(f,B_{\epsilon^{k}}(x))\Bigr)^{p}\,d\mathfrak{m}_{k}(x)\\
&\le C \int\limits_{S}\Bigl(\fint\limits_{B_{\epsilon^{k}}(x)}|f(y)|^{p}\,d\mathfrak{m}_{k}(y)\Bigr)\,d\mathfrak{m}_{k}(x) \le C\int\limits_{S}|f(y)|^{p}\,d\mathfrak{m}_{k}(y).
\end{split}
\end{equation}
Summing this estimate over all $k \in \{0,...,L\}$ we get the required estimate.
\end{proof}

\subsection{Sobolev spaces}

Recall that the integrability parameter $p \in (1,\infty)$ is assumed to be \textit{fixed throughout the paper.}
Recall that there are several different approaches to Sobolev spaces on metric measure spaces (see chapter 10 in \cite{HKST} for the detailed exposition 
and \cite{T5} for the corresponding discussions).
In the present paper, we follow the approach proposed by J.~Cheeger \cite{Ch} and introduce the following definition of Sobolev spaces.
\begin{Def}
\label{Def.Cheeger_Sobolev}
The Sobolev space $W^{1}_{p}(\operatorname{X})$
is a linear space consisting of all $F \in L_{p}(\operatorname{X})$ with $\operatorname{Ch}_{p}(F) < +\infty$,
where $\operatorname{Ch}_{p}(F)$ is \textit{a Cheeger energy of $F$} defined by
\begin{equation}
\notag
\operatorname{Ch}_{p}(F):=\inf\{\varliminf\limits_{n \to \infty}\int\limits_{\operatorname{X}}(\operatorname{lip}F_{n})^{p}\,d\mu: \{F_{n}\} \subset \operatorname{LIP}(\operatorname{X}),\ F_{n} \to F \text{ in } L_{p}(\operatorname{X})\}.
\end{equation}
The space $W^{1}_{p}(\operatorname{X})$ is normed by $\|F|W_{p}^{1}(\operatorname{X})\|:=\|F|L_{p}(\operatorname{X})\|+(\operatorname{Ch}_{p}(F))^{\frac{1}{p}}.$
\end{Def}

Recall the notion of $p$-capacity $C_{p}$ (see Subsection 1.4 in \cite{BB} for the details).
It is well known that, for each element $F \in W^{1}_{p}(\operatorname{X})$, there is a Borel representative $\overline{F}$ which
has Lebesgue points everywhere on $\operatorname{X}$ except a set of $p$-capacity zero. Any such a representative will be called
a \textit{$p$-sharp representative} of $F$.

In this paper, we follow the approach to traces of Sobolev functions proposed by the author in the recent paper \cite{T5}.
Suppose we are given a Borel regular 
measure $\mathfrak{m}$ on $\operatorname{X}$ and a closed nonempty set $S \subset \operatorname{X}$ 
such that $\operatorname{supp}\mathfrak{m}=S$ and $C_{p}(S) > 0$. Assume that the measure $\mathfrak{m}$ is absolutely continuous with respect to $C_{p}$, i.e.,
for each Borel set $E \subset S$, the equality $C_{p}(E)=0$ implies the equality $\mathfrak{m}(E)=0$.
We define the $\mathfrak{m}$-trace $F|_{S}^{\mathfrak{m}}$ of any element $F \in W_{p}^{1}(\operatorname{X})$ to $S$ as the
$\mathfrak{m}$-equivalence class of the pointwise restriction $\overline{F}|_{S}$ of any $p$-sharp representative $\overline{F}$ of $F$ to the set $S$.
By $W_{p}^{1}(\operatorname{X})|^{\mathfrak{m}}_{S}$ we denote the linear space of $\mathfrak{m}$-traces of all $F \in W_{p}^{1}(\operatorname{X})$ equipped with the corresponding quotient space norm.
We also introduce the $\mathfrak{m}$-trace operator $\operatorname{Tr}|_{S}^{\mathfrak{m}}:W_{p}^{1}(\operatorname{X}) \to W_{p}^{1}(\operatorname{X})|_{S}^{\mathfrak{m}}$
which acts on $W_{p}^{1}(\operatorname{X})$ by $\operatorname{Tr}|_{S}^{\mathfrak{m}}(F):=F|_{S}^{\mathfrak{m}}$. Finally, we say that $F \in W_{p}^{1}(\operatorname{X})$ is an \textit{$\mathfrak{m}$-extension} of a given function $f: S \to \mathbb{R}$ provided that for the $\mathfrak{m}$-equivalence class $[f]_{\mathfrak{m}}$ of $f$ we have $[f]_{\mathfrak{m}}=F|^{\mathfrak{m}}_{S}$.

\subsection{Abstract criterion}
Now we briefly describe a particular case of author's recent result \cite{T5}. The detailed discussion of the concepts given in this section can be found
in \cite{T5}.

Let $\theta \geq 0$ , $S \in \mathcal{LCR}_{\theta}(\operatorname{X})$ and $\{\mathfrak{m}_{k}\} \in \mathfrak{M}^{str}_{\theta}(S)$.
Given $k \in \mathbb{N}_{0}$, for each $r > 0$, we put
\begin{equation}
\label{eqq.tricky_average}
\widetilde{\mathcal{E}}_{\mathfrak{m}_{k}}(f,B_{r}(x)):=
\begin{cases}
\mathcal{E}_{\mathfrak{m}_{k}}(f,B_{2r}(x)), \quad \text{if} \quad B_{r}(x) \cap S \neq \emptyset;\\
0, \quad \text{if} \quad B_{r}(x) \cap S = \emptyset.
\end{cases}
\end{equation}

The following concept was introduced in \cite{T5}.

\begin{Def}
Given a parameter $c \geq 1$, we say that a finite family of closed balls $\mathcal{B}=\{B_{r_{i}}(x_{i})\}_{i=1}^{N}$ is $(S,c)$-nice if the following conditions hold:
\begin{itemize}
\item[\((\textbf{F}1)\)] $B_{r_{i}}(x_{i}) \cap B_{r_{j}}(x_{j}) = \emptyset$ if $i \neq j$;

\item[\((\textbf{F}2)\)] $\max\{r_{i}:i=1,...,N\} \le 1$;

\item[\((\textbf{F}3)\)] $B_{cr_{i}}(x_{i}) \cap S \neq \emptyset$ for all $i \in \{1,...,N\}$.
\end{itemize}
We say that an $(S,c)$-nice family $\mathcal{B}=\{B_{r_{i}}(x_{i})\}_{i=1}^{N}$ is an $(S,c)$-Whitney family provided that
\begin{itemize}
\item[\((\textbf{F}4)\)] $B_{r_{i}}(x_{i}) \cap S = \emptyset$ for all $i \in \{1,...,N\}$.
\end{itemize}

\end{Def}

For the subsequent exposition we shall adopt the following notation. Given a number $r > 0$, by $k(r)$ we denote the unique
integer such that $r \in (2^{-k(r)-1},2^{-k(r)}]$.

We introduce the \textit{Brudnyi--Shvartsman-type functional}. Given $c \geq 1$, for each $f \in \cap_{k=0}^{\infty}L_{p}^{loc}(\mathfrak{m}_{k})$,
\begin{equation}
\begin{split}
\label{eq.main2}
\mathcal{BSN}_{p,\{\mathfrak{m}_{k}\},c}(f):=
\sup&\Bigl(\sum\limits_{i=1}^{N} \frac{\mu(B_{r_{i}}(x_{i}))}{r^{p}_{i}}\Bigl(\widetilde{\mathcal{E}}_{\mathfrak{m}_{k(r_{i})}}(f,B_{cr_{i}}(x_{i}))\Bigr)^{p}\Bigr)^{\frac{1}{p}},
\end{split}
\end{equation}
where the supremum is taken over all $(S,c)$-nice families of closed balls $\{B_{r_{i}}(x_{i})\}_{i=1}^{N}$.

Given $f \in \cap_{k=0}^{\infty}L_{1}^{loc}(\mathfrak{m}_{k})$, we define the \textit{$\{\mathfrak{m}_{k}\}$-Calder\'on maximal function} by the formula
\begin{equation}
\notag
f^{\sharp}_{\{\mathfrak{m}_{k}\}}(x):=\sup\limits_{r \in (0,1]}\frac{1}{r}\widetilde{\mathcal{E}}_{\mathfrak{m}_{k(r)}}(f,B_{r}(x)), \quad x \in \operatorname{X}.
\end{equation}

We recall Definition \ref{Def.porous} and define a natural analog of the Besov seminorm. Given $\sigma \in (0,1]$, we put,
for each $f \in \cap_{k=0}^{\infty}L_{1}^{loc}(\mathfrak{m}_{k})$,
\begin{equation}
\label{eq.main3}
\mathcal{BN}_{p,\{\mathfrak{m}_{k}\},\sigma}(f):=\|f^{\sharp}_{\{\mathfrak{m}_{k}\}}|L_{p}(S,\mu)\|+\Bigl(\sum\limits_{k=1}^{\infty}
\epsilon^{k(\theta-p)}\int\limits_{S_{\epsilon^{k}}(\sigma)}\Bigl(\mathcal{E}_{\mathfrak{m}_{k}}(f,B_{\epsilon^{k}}(x))\Bigr)^{p}\,d\mathfrak{m}_{k}(x)\Bigr)^{\frac{1}{p}}.
\end{equation}

Now we are ready to formulate a criterion which is part of Theorem 1.4 from \cite{T5}.

\begin{Th}
\label{Th.main}
Let $\{\mathfrak{m}_{k}\} \in \mathfrak{M}^{str}_{\theta}(S)$, $\epsilon:=\epsilon(\{\mathfrak{m}_{k}\})$, $c \geq \frac{3}{\epsilon}$ and $\sigma \in (0,\frac{\epsilon^{2}}{4c})$. Then, given $f \in L_{1}^{loc}(\{\mathfrak{m}_{k}\})$, the following conditions are equivalent:

\begin{itemize}
\item[\((i)\)] $f \in W_{p}^{1}(\operatorname{X})|_{S}^{\mathfrak{m}_{0}}$;

\item[\((ii)\)] $\operatorname{BSN}_{p,\{\mathfrak{m}_{k}\},c}(f):=\|f|L_{p}(\mathfrak{m}_{0})\|+\mathcal{BSN}_{p,\{\mathfrak{m}_{k}\},c}(f) < +\infty$;

\item[\((iii)\)] $\operatorname{BN}_{p,\{\mathfrak{m}_{k}\},\sigma}(f):=\|f|L_{p}(\mathfrak{m}_{0})\|+\mathcal{BN}_{p,\{\mathfrak{m}_{k}\},\sigma}(f) < +\infty$.

\end{itemize}
Furthermore, for each $c \geq \frac{3}{\epsilon}$ and $\sigma \in (0,\frac{\epsilon^{2}}{4c})$, for every $f \in L_{1}^{loc}(\{\mathfrak{m}_{k}\})$,
\begin{equation}
\label{eq.717}
\begin{split}
&\|f|W_{p}^{1}(\operatorname{X})|_{S}^{\mathfrak{m}_{0}}\| \approx \operatorname{BSN}_{p,\{\mathfrak{m}_{k}\},c}(f) \approx
\operatorname{BN}_{p,\{\mathfrak{m}_{k}\},\sigma}(f)
\end{split}
\end{equation}
where the equivalence constants are independent of $f$.
\end{Th}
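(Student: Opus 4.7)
Since Theorem \ref{Th.main} asserts a three-way equivalence with norm equivalence, the plan is to prove the necessity implications $(i) \Rightarrow (ii)$ and $(i) \Rightarrow (iii)$, the sufficiency implication $(iii) \Rightarrow (i)$, and finally the internal equivalence $(ii) \Leftrightarrow (iii)$; all quantitative estimates should be tracked to obtain \eqref{eq.717}.

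For necessity, fix a $p$-sharp representative $\overline{F}$ of $F \in W_p^{1}(\operatorname{X})$ with $\overline{F}|_S = f$ $\mathfrak{m}_0$-a.e. For a ball $B_r(x)$ with $x \in S$, I would estimate $\mathcal{E}_{\mathfrak{m}_k}(f, B_r(x))$ by first comparing it with $\mathcal{E}_{\mu}(\overline{F}, B_{Cr}(x))$ for a suitable enlargement factor: the key point is that $\mathfrak{m}_k$ is comparable to $\mu(B)/r^\theta$ on scales $r \geq \epsilon^k$ by conditions (\textbf{M}2)--(\textbf{M}3), so averages against $\mathfrak{m}_k$ can be compared with averages against $\mu$ on a slightly enlarged ball via Fubini together with Remark \ref{Rem.change_variables}. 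Then I invoke the weak local $(1,p)$-Poincar\'e inequality \eqref{eq.Poincare} to bound $\mathcal{E}_{\mu}(\overline{F}, \cdot)$ by $r\bigl(\fint (\operatorname{lip}\overline{F})^p\bigr)^{1/p}$. For (ii), summing the $p$-th powers over an $(S,c)$-nice disjoint family $\{B_{r_i}(x_i)\}$ collapses into an integral of $(\operatorname{lip}\overline{F})^p$ via condition (\textbf{F}1) and Proposition \ref{Prop.overlapping}. For (iii), I would handle the maximal-function term and the scale-sum term separately, the latter by stratifying $S$ using $\epsilon^k$-separated nets and exploiting the bounded overlap of $\epsilon^k$-balls combined with Fubini.

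For sufficiency $(iii) \Rightarrow (i)$, I would construct an explicit extension operator $\operatorname{Ext}: f \mapsto F$. The plan is a Whitney-type decomposition of $\operatorname{X} \setminus S$: for each Whitney ball $B$ of radius $r \approx \operatorname{dist}(\cdot, S)$, choose a point $x_B \in S$ realizing (approximately) the distance, and define $F$ on $B$ as the $\mathfrak{m}_{k(r)}$-average of $f$ over $B_{Cr}(x_B)$. Glue these local averages via a Lipschitz partition of unity subordinate to the Whitney cover, and set $F = f$ on $S$. The pointwise slope $\operatorname{lip} F$ at a point $y \in \operatorname{X} \setminus S$ is then controlled by differences of nearby $\mathfrak{m}_k$-averages, which by a standard chain-of-balls argument across scales is bounded by a telescoping sum of oscillations $\mathcal{E}_{\mathfrak{m}_k}(f,\cdot)/\epsilon^k$. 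Condition (\textbf{M}5) and the fact that $\mathfrak{m}_0$ is absolutely continuous with respect to $C_p$ ensure that $\mathfrak{m}_0$-a.e. $x \in S$ is a Lebesgue point of (the $p$-sharp representative of) $F$ with value $f(x)$.

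The main obstacle is the sufficiency direction, and specifically the verification that the Cheeger energy of the extension is bounded by $\operatorname{BN}_{p,\{\mathfrak{m}_k\},\sigma}(f)^p$. Technically this requires handling both Whitney balls sitting near the $\sigma$-porous part $S_{\epsilon^k}(\sigma)$ of $S$ and the "bad" balls where porosity degrades and the Whitney scale ceases to match the scale of oscillation of $f$. The porosity threshold $\sigma \in (0,\epsilon^2/(4c))$ in the hypothesis is precisely what ensures that every Whitney ball at scale $\epsilon^k$ can be associated with a porous anchor point in $S_{\epsilon^k}(\sigma)$, so the scale sum in $\mathcal{BN}_{p,\{\mathfrak{m}_k\},\sigma}$ integrates only over the porous stratum without losing information. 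Finally, the internal equivalence $(ii) \Leftrightarrow (iii)$ is a covering argument: given any $(S,c)$-nice family, one stratifies by dyadic scales and estimates each stratum by $\|f^{\sharp}_{\{\mathfrak{m}_k\}}\|_{L_p(S,\mu)}$ plus the porous scale-sum, whereas in the reverse direction maximal disjoint subfamilies of $\epsilon^k$-balls centered on $S_{\epsilon^k}(\sigma)$ realize the Besov-type sum as a supremum of Brudnyi--Shvartsman type.
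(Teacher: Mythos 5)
The paper does not prove Theorem~\ref{Th.main}: it is introduced as ``a criterion which is part of Theorem~1.4 from~\cite{T5}'' and simply cited, so there is no in-text argument to compare against. Your sketch reconstructs the standard architecture for trace theorems of this type (necessity via the Poincar\'e inequality, sufficiency via a Whitney-type extension across $\operatorname{X}\setminus S$, internal equivalence via covering arguments), and at that level of abstraction it is consistent with the method of~\cite{T5} that the rest of the paper draws on.

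However, as written the plan contains at least one step that would fail. In the necessity direction you claim that ``averages against $\mathfrak{m}_k$ can be compared with averages against $\mu$ on a slightly enlarged ball via Fubini together with Remark~\ref{Rem.change_variables}.'' When $\theta > 0$ the set $S$ is $\mu$-null (Proposition~\ref{Prop.measure_plus_porosity}), so $\mathfrak{m}_k$ (supported on $S$) and $\mu$ are mutually singular and there is no Fubini exchange between them; moreover Remark~\ref{Rem.change_variables} concerns $\mathfrak{m}_k$-integrals only and says nothing about comparison with $\mu$. The actual argument must pass through the $p$-sharp representative $\overline{F}$, the absolute continuity of $\mathfrak{m}_k$ with respect to the $p$-capacity $C_p$, and a telescoping chain of $\mu$-oscillations over shrinking balls, each estimated via~\eqref{eq.Poincare}, in order to link the pointwise value $\overline{F}(x)$ at an $\mathfrak{m}_k$-typical $x\in S$ with a $\mu$-average at a fixed scale --- this is a substantive step, not a one-line Fubini. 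Likewise, your account of the role of the threshold $\sigma < \epsilon^{2}/(4c)$ and of how Whitney balls anchored near non-porous strata of $S$ are absorbed into the maximal-function term of~\eqref{eq.main3} is speculative: the definition integrates only over $S_{\epsilon^k}(\sigma)$, so the extension must be built so that contributions from balls whose anchor lies outside $S_{\epsilon^k}(\sigma)$ are controlled by $\|f^{\sharp}_{\{\mathfrak{m}_k\}}|L_p(S,\mu)\|$ or by $\|f|L_p(\mathfrak{m}_0)\|$, and that bookkeeping is the technical heart of the sufficiency direction in~\cite{T5}; it cannot simply be asserted.
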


\section{Besov spaces}

The theory of Besov spaces $\operatorname{B}^{s}_{p,q}(\operatorname{X})$, $s > 0$, $p,q \in (0,+\infty]$ on $\operatorname{X}$
is of great interest in the recent years \cite{AWYY,BPV,GBIZ,GoKoSh}.
In what follows we will work with Besov spaces defined on Ahlfors--David regular subsets of $\operatorname{X}$.
Since in this note we will not work with the whole scale of Besov spaces, we define them only for $p=q \in (1,\infty)$ and $s \in (0,1)$.

Throughout the section we put $B_{k}(x):=B_{2^{-k}}(x)$ for all $x \in \operatorname{X}$ and $k \in \mathbb{Z}$. The following proposition is an immediate
consequence of \eqref{eqq.averaging_comparison} and Remark \ref{Rem.Ahlfors_doubling}.
\begin{Prop}
\label{Prop.3.1}
Let $S \in \mathcal{ADR}_{\theta}(\operatorname{X})$ for some $\theta \in [0,\underline{Q}_{\mu})$. Given $c \geq 1$, there is a constant $C > 0$ such that,
for each $f \in L_{1}^{loc}(\mathcal{H}_{\theta}\lfloor_{S})$, for every $k \in \mathbb{N}_{0}$ and any $x,y$ with $\operatorname{d}(x,y) \le \frac{c}{2^{k}}$,
\begin{equation}
\Bigl|\fint\limits_{B_{k}(x) \cap S}f(x')\,d\mathcal{H}_{\theta}(x')-\fint\limits_{B_{k}(y) \cap S}f(y')\,d\mathcal{H}_{\theta}(y')\Bigr| \le C\mathcal{E}_{\mathcal{H}_{\theta}\lfloor_{S}}(f,B_{2^{-k}+\operatorname{d}(x,y)}(x)).
\end{equation}
\end{Prop}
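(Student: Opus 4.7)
The plan is to reduce everything to an oscillation over a single ball containing both $B_{k}(x)$ and $B_{k}(y)$, and then pay for the passage to this enlarged ball with a constant coming from Ahlfors--David regularity and doubling. Concretely, I would set $B := B_{2^{-k}+\operatorname{d}(x,y)}(x)$. By the triangle inequality, $B_{k}(x)\subset B$ and $B_{k}(y)\subset B$, and the radius of $B$ satisfies $r(B)\le (1+c)2^{-k}$. For every $\underline{c}\in \mathbb{R}$, a direct manipulation gives
\[
\Bigl|\fint_{B_{k}(x)\cap S} f\,d\mathcal{H}_\theta - \underline{c}\Bigr|\ \le\ \frac{\mathcal{H}_\theta(B\cap S)}{\mathcal{H}_\theta(B_{k}(x)\cap S)}\,\fint_{B\cap S}|f-\underline{c}|\,d\mathcal{H}_\theta,
\]
and the analogous bound holds with $x$ replaced by $y$.

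The key auxiliary step is then to bound the two ratios of codimensional Hausdorff measures that appear above by a constant depending only on $c$, $\theta$, and on the Ahlfors--David constants $\varkappa_{\theta,1}(S),\varkappa_{\theta,2}(S)$. In the setting of interest $x,y\in S$, so I would apply the upper bound from \eqref{eqq.Ahlfors_David} to $B\subset B_{(1+c)2^{-k}}(x)$ together with the lower bound from \eqref{eqq.Ahlfors_David} applied to $B_{k}(x)$, and then invoke the uniformly locally doubling property of $\mu$ to absorb the ratio $\mu(B_{(1+c)2^{-k}}(x))/\mu(B_{k}(x))$ into a constant. The same argument with $y$ in place of $x$, combined with the uniformly locally doubling property of $\mathcal{H}_\theta\lfloor_S$ from Remark~\ref{Rem.Ahlfors_doubling} (which lets me freely compare $\mathcal{H}_\theta(B_{k}(y)\cap S)$ with $\mathcal{H}_\theta(B_{(1+c)2^{-k}}(y)\cap S)\ge \mathcal{H}_\theta(B\cap S)$), handles the second ratio.

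With both ratios bounded, the triangle inequality gives
\[
\Bigl|\fint_{B_{k}(x)\cap S} f\,d\mathcal{H}_\theta - \fint_{B_{k}(y)\cap S} f\,d\mathcal{H}_\theta\Bigr|\ \le\ C\,\fint_{B\cap S}|f-\underline{c}|\,d\mathcal{H}_\theta,
\]
and taking $\inf_{\underline{c}\in \mathbb{R}}$ on the right-hand side together with \eqref{eqq.averaging_comparison} produces exactly the claimed estimate. I expect no real obstacle in the argument: the whole proof is routine bookkeeping that trades the passage from $B_{k}(x)\cap S$ (respectively $B_{k}(y)\cap S$) to $B\cap S$ for a constant factor coming from Ahlfors--David regularity and doubling, which is why the author advertises the statement as an immediate consequence of \eqref{eqq.averaging_comparison} and Remark~\ref{Rem.Ahlfors_doubling}.
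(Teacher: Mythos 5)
Your proof is correct and is precisely the elementary argument the paper has in mind when it declares the proposition to be an immediate consequence of \eqref{eqq.averaging_comparison} and Remark~\ref{Rem.Ahlfors_doubling}; the key inclusions $B_k(x)\subset B$, $B_k(y)\subset B \subset B_{(1+2c)2^{-k}}(y)$ and the two measure ratios bounded via doubling of $\mathcal{H}_\theta\lfloor_S$ are exactly what is needed. Two small bookkeeping remarks: the estimate only makes sense (and is stated implicitly) for $x,y\in S$, which you correctly assume; and for the finitely many indices $k$ with $(1+c)2^{-k}>1$ one cannot invoke \eqref{eqq.Ahlfors_David} directly on the enlarged ball, but iterating the doubling property of $\mathcal{H}_\theta\lfloor_S$ from Remark~\ref{Rem.Ahlfors_doubling} (with $R$ of order $1+c$) covers this, as you indicate; finally, once you take $\inf_{\underline c}$ you land directly on $\mathcal{E}_{\mathcal{H}_\theta\lfloor_S}(f,B)$, so the appeal to \eqref{eqq.averaging_comparison} at the end is not actually needed.
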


The following definition is inspired by that used in \cite{SakSot}.

\begin{Def}
\label{Def.Besov}
Let $S \in \mathcal{ADR}_{\theta}(\operatorname{X})$ for some $\theta \in [0,\underline{Q}_{\mu})$. Given $s \in (0,1)$ and $p \in (1,\infty)$, a function $f \in L_{p}(\mathcal{H}_{\theta}\lfloor_{S})$ belongs
to the Besov space $\operatorname{B}^{s}_{p}(S):=\operatorname{B}^{s}_{p,p}(S)$ provided that
\begin{equation}
\label{eqq.Besov_norm_1}
\mathcal{BN}^{s}_{p}(f):=\Bigl(\sum\limits_{k=1}^{\infty}
2^{ksp}\int\limits_{S}\Bigl(\mathcal{E}_{\mathcal{H}_{\theta}\lfloor_{S}}(f,B_{k}(x))\Bigr)^{p}\,d\mathcal{H}_{\theta}(x)\Bigr)^{\frac{1}{p}} < +\infty.
\end{equation}
Furthermore, we put
\begin{equation}
\label{eqq.true_Besov_norm}
\|f|\operatorname{B}^{s}_{p}(S)\|:=\|f|L_{p}(\mathcal{H}_{\theta}\lfloor_{S})\|+\mathcal{BN}^{s}_{p}(f).
\end{equation}
\end{Def}

We also need an alternative definition of the Besov space. The corresponding characterization is given by Theorem \ref{Th.equivalent_Besov_norms} below. Note that a similar result for homogeneous Besov spaces
was obtained in \cite{GBIZ} for the whole scale of parameters $s,p,q$. In fact, based on the ideas and methods of \cite{GBIZ} one can obtain an ``inhomogeneous analogs''
of the corresponding results. However, in our particular case, more simple (and, in fact, classical) techniques work perfectly.
We present the details for the completeness of our exposition.

\begin{Th}
\label{Th.equivalent_Besov_norms}
Let $S \in \mathcal{ADR}_{\theta}(\operatorname{X})$ for some $\theta \in (0,\underline{Q}_{\mu})$. Given $s > 0$ and $p \in (1,\infty)$, a function $f \in L_{p}(\mathcal{H}_{\theta}\lfloor_{S})$ belongs
to the Besov space $\operatorname{B}^{s}_{p}(S):=\operatorname{B}^{s}_{p,p}(S)$ if and only if
\begin{equation}
\label{eqq.Besov_norm_2}
\widetilde{\mathcal{BN}}^{s}_{p}(f):=\Bigl(\sum\limits_{k=1}^{\infty}
2^{ksp}\int\limits_{S}\fint\limits_{B_{k}(x)\cap S}|f(x)-f(y)|^{p}\,d\mathcal{H}_{\theta}(y)\,d\mathcal{H}_{\theta}(x)\Bigr)^{\frac{1}{p}} < +\infty.
\end{equation}
Furthermore, there is a constant $C > 0$ such that
\begin{equation}
\label{eqq.Besov_norms_equivalence}
\frac{1}{C}\|f|\operatorname{B}^{s}_{p}(S)\| \le \|f|L_{p}(\mathcal{H}_{\theta}\lfloor_{S})\|+\widetilde{\mathcal{BN}}^{s}_{p}(f) \le C \|f|\operatorname{B}^{s}_{p}(S)\| \quad \text{for all} \quad f \in L_{p}(\mathcal{H}_{\theta}\lfloor_{S}).
\end{equation}
\end{Th}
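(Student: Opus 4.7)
The plan is to prove the two one-sided inequalities $\mathcal{BN}^{s}_{p}(f) \le \widetilde{\mathcal{BN}}^{s}_{p}(f)$ and $\widetilde{\mathcal{BN}}^{s}_{p}(f) \lesssim \|f|L_{p}(\mathcal{H}_{\theta}\lfloor_{S})\| + \mathcal{BN}^{s}_{p}(f)$. The first is immediate: choosing the admissible constant $c=f(x)$ in the infimum defining $\mathcal{E}_{\mathcal{H}_{\theta}\lfloor_{S}}(f,B_{k}(x))$, Jensen's inequality gives
\begin{equation*}
(\mathcal{E}_{\mathcal{H}_{\theta}\lfloor_{S}}(f,B_{k}(x)))^{p} \le \fint_{B_{k}(x)\cap S}|f(x)-f(y)|^{p}\,d\mathcal{H}_{\theta}(y),
\end{equation*}
and integrating in $x$ and weighting by $2^{ksp}$ concludes this direction.

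For the reverse inequality I would use the classical telescoping-plus-Hardy argument, built on the doubling of $\mathcal{H}_{\theta}\lfloor_{S}$ recorded in Remark \ref{Rem.Ahlfors_doubling}. Set $g_{j}(x):=\mathcal{E}_{\mathcal{H}_{\theta}\lfloor_{S}}(f,B_{j}(x))$ and $f_{k,z}:=\fint_{B_{k}(z)\cap S}f\,d\mathcal{H}_{\theta}$. By Lebesgue differentiation (applicable by Remark \ref{Rem.Ahlfors_doubling}), $f(x)=\lim_{j\to\infty}f_{j,x}$ for $\mathcal{H}_{\theta}$-a.e. $x\in S$, and then the nestedness $B_{j+1}(x)\subset B_{j}(x)$ combined with doubling of $\mathcal{H}_{\theta}\lfloor_{S}$ gives $|f_{j+1,x}-f_{j,x}|\lesssim g_{j}(x)$, so telescoping yields $|f(x)-f_{k,x}|\lesssim\sum_{j\ge k}g_{j}(x)$. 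For $y\in B_{k}(x)\cap S$ one has $\operatorname{d}(x,y)\le 2^{-k}$, hence Proposition \ref{Prop.3.1} together with a standard monotonicity-up-to-doubling argument controls the crossover $|f_{k,x}-f_{k,y}|\lesssim g_{k-1}(x)$. Combining with the analogous bound at $y$ and the triangle inequality gives, for $\mathcal{H}_{\theta}$-a.e. such pair,
\begin{equation*}
|f(x)-f(y)|^{p}\lesssim\Bigl(\sum_{j\ge k-1}g_{j}(x)\Bigr)^{p}+\Bigl(\sum_{j\ge k}g_{j}(y)\Bigr)^{p}.
\end{equation*}

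To conclude, substitute this bound into the definition of $\widetilde{\mathcal{BN}}^{s}_{p}(f)^{p}$. For the $y$-dependent piece I would swap integrations by Tonelli: the doubling of $\mathcal{H}_{\theta}\lfloor_{S}$ implies $\mathcal{H}_{\theta}(B_{k}(x)\cap S)\approx\mathcal{H}_{\theta}(B_{k}(y)\cap S)$ whenever $\operatorname{d}(x,y)\le 2^{-k}$, so
\begin{equation*}
\int_{S}\fint_{B_{k}(x)\cap S}G(y)\,d\mathcal{H}_{\theta}(y)\,d\mathcal{H}_{\theta}(x)\lesssim\int_{S}G(y)\,d\mathcal{H}_{\theta}(y).
\end{equation*}
Both resulting pieces then take the form $\sum_{k\ge 1}2^{ksp}\int_{S}\bigl(\sum_{j\ge k-1}g_{j}(z)\bigr)^{p}d\mathcal{H}_{\theta}(z)$, and the standard discrete Hardy inequality (valid because $s>0$) reduces this to $\sum_{j\ge 0}2^{jsp}\int_{S}g_{j}(z)^{p}\,d\mathcal{H}_{\theta}(z)$. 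The part $j\ge 1$ is exactly $\mathcal{BN}^{s}_{p}(f)^{p}$, while the $j=0$ contribution is controlled by $\|f|L_{p}(\mathcal{H}_{\theta}\lfloor_{S})\|^{p}$ via $g_{0}(z)\le 2\fint_{B_{0}(z)\cap S}|f|\,d\mathcal{H}_{\theta}$ and one more Tonelli swap with doubling. The main obstacle is the careful bookkeeping of dyadic-index shifts, especially the $g_{k-1}$ coming out of Proposition \ref{Prop.3.1}, and ensuring that the Hardy inequality absorbs these shifts uniformly; once this is handled, everything else is routine given the AD-regularity of $S$.
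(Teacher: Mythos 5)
Your argument is correct and follows essentially the same route as the paper: the easy direction via Jensen, and the hard direction via Lebesgue differentiation, telescoping, Proposition \ref{Prop.3.1} for the crossover term, a Tonelli swap justified by the doubling of $\mathcal{H}_{\theta}\lfloor_{S}$, and the discrete Hardy inequality. Your Step 1 is slightly cleaner than the paper's (a direct pointwise choice $c=f(x)$ rather than a double average), and you absorb the crossover $g_{k-1}$ into the Hardy sum instead of treating it as a separate term, but these are cosmetic differences; the key ideas and lemmas invoked are the same.
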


\begin{proof}
We fix $f \in L_{p}(\mathcal{H}_{\theta}\lfloor_{S})$ and split the proof into two natural steps.

\textit{Step 1.}
Note that, given $k \in \mathbb{Z}$, $B_{k}(x) \subset B_{k-1}(y)$ for all $y \in B_{k}(x)$. Hence, using \eqref{eqq.averaging_comparison}, H\"older's inequality, and Remark \ref{Rem.Ahlfors_doubling}, it is easy to get
\begin{equation}
\label{eqq.3.5}
\Bigl(\mathcal{E}_{\mathcal{H}_{\theta}\lfloor_{S}}(f,B_{k}(x))\Bigr)^{p} \le C\fint\limits_{B_{k}(x)\cap S}\fint\limits_{B_{k-1}(y)\cap S}|f(y)-f(z)|^{p}\,d\mathcal{H}_{\theta}(y)d\mathcal{H}_{\theta}(z).
\end{equation}
We plug \eqref{eqq.3.5} into \eqref{eqq.Besov_norm_1}, change variables, and take into account Remark \ref{Rem.change_variables}. This gives
\begin{equation}
\label{eqq.3.7'}
\begin{split}
&(\mathcal{BN}^{s}_{p}(f))^{p} \le \sum\limits_{k=2}^{\infty}
2^{ksp}\int\limits_{S}\Bigl(\fint\limits_{B_{k-1}(y)\cap S}|f(y)-f(z)|^{p}\,d\mathcal{H}_{\theta}(z)\Bigr)d\mathcal{H}_{\theta}(y) \le C (\widetilde{\mathcal{BN}}^{s}_{p}(f))^{p}.
\end{split}
\end{equation}
Furthermore, by Lemma \ref{Lm.estimate_by_lp_norms} applied with $L=1$ and $\mathfrak{m}_{k}=\mathcal{H}_{\theta}\lfloor_{S}$, $k \in \mathbb{N}_{0}$ we have
\begin{equation}
\label{eqq.3.8'}
\int\limits_{S}\Bigl(\mathcal{E}_{\mathcal{H}_{\theta}\lfloor_{S}}(f,B_{1}(x))\Bigr)^{p}d\mathcal{H}_{\theta}(x) \le C\|f|L_{p}(\mathcal{H}_{\theta}\lfloor_{S})\|^{p}.
\end{equation}
As a result, combining \eqref{eqq.3.7'} and \eqref{eqq.3.8'} we get the left-hand inequality in \eqref{eqq.Besov_norms_equivalence}.

\textit{Step 2.}
We verify the right-hand inequality in \eqref{eqq.Besov_norms_equivalence}. We put $f_{B_{i}(x)}:=\int_{B_{i}(x) \cap S}f(y)\,d\mathcal{H}_{\theta}(y)$ for $x \in S$ and $i \in \mathbb{Z}$.
Having at our disposal Remark \ref{Rem.Ahlfors_doubling} we get existence of a Borel set $E \subset S$ with $\mathcal{H}_{\theta}(E)=0$ such that $f(x)=\lim_{i \to \infty}f_{B_{i}(x)}$
for all $x \in S \setminus E.$
Thus,
\begin{equation}
\label{eqq.3.7}
\begin{split}
&|f(x)-f(y)| \le \sum\limits_{i=k}^{\infty}|f_{B_{i}(x)}-f_{B_{i+1}(x)}|+|f_{B_{k}(x)}-f_{B_{k}(y)}|\\
&+\sum\limits_{i=k}^{\infty}|f_{B_{i}(y)}-f_{B_{i+1}(y)}| \quad \text{for all pairs} \quad (x,y) \in (S \setminus E) \times (S \setminus E).
\end{split}
\end{equation}
Hence, by Proposition \ref{Prop.3.1} we have
\begin{equation}
\label{eqq.3.8}
\begin{split}
&|f(x)-f(y)| \le C\sum\limits_{i=k}^{\infty}\mathcal{E}_{\mathcal{H}_{\theta}\lfloor_{S}}(f,B_{i}(x))+C\mathcal{E}_{\mathcal{H}_{\theta}\lfloor_{S}}(f,3B_{k}(x))\\
&+C\sum\limits_{i=k}^{\infty}\mathcal{E}_{\mathcal{H}_{\theta}\lfloor_{S}}(f,B_{i}(y)) \quad \text{for all pairs} \quad (x,y) \in (S \setminus E) \times (S \setminus E).
\end{split}
\end{equation}
We plug \eqref{eqq.3.8} into \eqref{eqq.Besov_norm_2}. As a result, we obtain
\begin{equation}
\label{eqq.3.10}
\begin{split}
&\Bigl(\widetilde{\mathcal{BN}}^{s}_{p}(f)\Bigr)^{p}=\int\limits_{S}\sum\limits_{k=1}^{\infty}
2^{ksp}\fint\limits_{B_{k}(x)}|f(x)-f(y)|^{p}\,d\mathcal{H}_{\theta}(y)\,d\mathcal{H}_{\theta}(x)\\
&\le C(R_{1}+R_{2}+R_{3}+R_{4}).
\end{split}
\end{equation}
By Hardy's inequality we have
\begin{equation}
\begin{split}
&R_{1}:=\int\limits_{S}\sum\limits_{k=1}^{\infty}
2^{ksp}\fint\limits_{B_{k}(x)}\Bigl(\sum\limits_{i=k}^{\infty}\mathcal{E}_{\mathcal{H}_{\theta}\lfloor_{S}}(f,B_{i}(x))\Bigr)^{p}\,d\mathcal{H}_{\theta}(y)\,d\mathcal{H}_{\theta}(x) \\
&\le C  \int\limits_{S}\sum\limits_{k=1}^{\infty}
2^{ksp}(\mathcal{E}_{\mathcal{H}_{\theta}\lfloor_{S}}(f,B_{k}(x)))^{p}\,d\mathcal{H}_{\theta}(x) \le C \|f|\operatorname{B}^{s}_{p}(S)\|^{p}.
\end{split}
\end{equation}
To estimate $R_{2}$ we change variables, use Hardy's inequality and, finally, take into account Remark \ref{Rem.change_variables}. As a result, we get
\begin{equation}
\begin{split}
&R_{2}:=\int\limits_{S}\sum\limits_{k=1}^{\infty}
2^{ksp}\fint\limits_{B_{k}(x)}\Bigl(\sum\limits_{i=k}^{\infty}\mathcal{E}_{\mathcal{H}_{\theta}\lfloor_{S}}(f,B_{i}(y))\Bigr)^{p}\,d\mathcal{H}_{\theta}(y)\,d\mathcal{H}_{\theta}(x) \\
&\le C\int\limits_{S}\sum\limits_{k=1}^{\infty}
2^{ksp}\Bigl(\sum\limits_{i=k}^{\infty}\mathcal{E}_{\mathcal{H}_{\theta}\lfloor_{S}}(f,B_{i}(y))\Bigr)^{p}\,d\mathcal{H}_{\theta}(y)
%\Bigl(\int\limits_{B_{k}(y)}\frac{2^{k\theta}\chi_{S}(x)}{\mu_{B_{k}(x)}}\,d\mathcal{H}_{\theta}(x)\Bigr)
\le C  \|f|\operatorname{B}^{s}_{p}(S)\|^{p}.
\end{split}
\end{equation}
Since $3B_{k}(x) \subset B_{k-2}(x)$, a combination of Proposition \ref{Prop.oscillation_comparison} and Proposition \ref{Prop.special_regular_measures} (with $\theta=\underline{\theta}$ and $\mathfrak{m}_{k}=\mathcal{H}_{\theta}\lfloor_{S}$, $k \in \mathbb{N}_{0}$) gives
\begin{equation}
\begin{split}
&R_{3}:=\int\limits_{S}\sum\limits_{k=3}^{\infty}
2^{ksp}\Bigl(\mathcal{E}_{\mathcal{H}_{\theta}\lfloor_{S}}(f,3B_{k}(x))\Bigr)^{p}\,d\mathcal{H}_{\theta}(x) \le \sum\limits_{k=3}^{\infty}\int\limits_{S}
2^{ksp}\Bigl(\mathcal{E}_{\mathcal{H}_{\theta}\lfloor_{S}}(f,B_{k-2}(x))\Bigr)^{p}\,d\mathcal{H}_{\theta}(x)\\
&\le 2^{2\theta s}\sum\limits_{k=1}^{\infty}\int\limits_{S}
2^{ksp}\Bigl(\mathcal{E}_{\mathcal{H}_{\theta}\lfloor_{S}}(f,B_{k}(x))\Bigr)^{p}\,d\mathcal{H}_{\theta}(x).
\end{split}
\end{equation}
Finally, changing variables and using Remark \ref{Rem.change_variables} it is easy to deduce
\begin{equation}
\label{eqq.3.13}
R_{4}:=\int\limits_{S}\sum\limits_{k=1}^{2}
2^{ksp}\fint\limits_{B_{k}(x) \cap S}|f(x)-f(y)|^{p}\,d\mathcal{H}_{\theta}(y)\,d\mathcal{H}_{\theta}(x) \le C\int\limits_{S}|f(x)|^{p}\,d\mathcal{H}_{\theta}(x).
\end{equation}

Combining \eqref{eqq.3.10}--\eqref{eqq.3.13} we obtain the right-hand inequality in \eqref{eqq.Besov_norms_equivalence}.

The proof is complete.
\end{proof}

\section{Main results}

Throughout this section we fix $\underline{\theta} \in [0,\min\{p,\underline{Q}_{\mu}\})$, $\theta \in [\underline{\theta},p)$,
and a set $S \in \mathcal{PADR}_{\theta}(\operatorname{X})$ such that $S=\cup_{i=1}^{N}S^{i}$ for some $N \in \mathbb{N}$, $N \geq 2$
and $S^{i} \in \mathcal{ADR}_{\theta_{i}}(\operatorname{X})$, $0 \le \theta_{1} < ... < \theta_{N}=\underline{\theta} < p$.
For each $k \in \mathbb{N}_{0}$, we put
\begin{equation}
\label{eqq.concrete_measure}
\mathfrak{m}_{k}:=\sum\limits_{i=1}^{N}2^{k(\theta-\theta_{i})}\mathcal{H}_{\theta_{i}}\lfloor_{S^{i}}.
\end{equation}

First of all, we make a simple observation which follows immediately from \eqref{eqq.concrete_measure}.
\begin{Prop}
\label{Prop.different_lp_norms}
A function $f: S \to \mathbb{R}$ belongs to $L^{loc}_{p}(\mathfrak{m}_{0})$ if and only if $f \in \cap_{i=1}^{N} L_{1}^{loc}(\mathcal{H}_{\theta_{i}}\lfloor_{S^{i}})$, and, furthermore (we set $p':=\frac{p}{p-1}$),
\begin{equation}
\|f|L_{p}(\mathfrak{m}_{0})\| \le \sum\limits_{i=1}^{N}\|f|L_{p}(\mathcal{H}_{\theta_{i}}\lfloor_{S^{i}})\| \le N^{\frac{1}{p'}}\|f|L_{p}(\mathfrak{m}_{0})\|.
\end{equation}
\end{Prop}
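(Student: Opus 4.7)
The plan is to reduce everything to the additive representation $\mathfrak{m}_{0} = \sum_{i=1}^{N}\mathcal{H}_{\theta_{i}}\lfloor_{S^{i}}$, which is simply \eqref{eqq.concrete_measure} with $k=0$. Since integration against a sum of Borel regular measures distributes over the summands, for any bounded Borel set $G \subset \operatorname{X}$ one has the local identity
\[
\int_{G}|f|^{p}\,d\mathfrak{m}_{0} = \sum_{i=1}^{N}\int_{G \cap S^{i}}|f|^{p}\,d\mathcal{H}_{\theta_{i}}.
\]
From this the equivalence of local $p$-integrability is immediate: the right-hand side is finite for every bounded $G$ if and only if the single term on the left is.

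For the two norm inequalities I would set $a_{i} := \|f|L_{p}(\mathcal{H}_{\theta_{i}}\lfloor_{S^{i}})\|$, so that the same identity taken on the whole of $\operatorname{X}$ gives $\|f|L_{p}(\mathfrak{m}_{0})\|^{p} = \sum_{i=1}^{N}a_{i}^{p}$. The left-hand inequality of the proposition then reads $(\sum_{i}a_{i}^{p})^{1/p} \le \sum_{i}a_{i}$, which is the standard monotonicity $\|\cdot\|_{\ell^{p}(\mathbb{R}^{N})} \le \|\cdot\|_{\ell^{1}(\mathbb{R}^{N})}$ for $p \ge 1$, itself equivalent to the elementary inequality $(\sum_{i}a_{i})^{p} \ge \sum_{i}a_{i}^{p}$ for nonnegative reals. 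The right-hand inequality is H\"older's inequality applied to the pairing $\sum_{i}a_{i}\cdot 1$ with exponents $p$ and $p'$, producing exactly the constant $N^{1/p'}$.

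There is essentially no obstacle here: the proposition is pure bookkeeping based on \eqref{eqq.concrete_measure} and H\"older's inequality on $N$ real scalars. The only mildly delicate point is that the intersection on the right-hand side of the equivalence should be read as $\bigcap_{i=1}^{N}L_{p}^{loc}(\mathcal{H}_{\theta_{i}}\lfloor_{S^{i}})$, which is what the quantitative $L_{p}$-norm estimate in the second display naturally requires and what the additive identity above directly delivers.
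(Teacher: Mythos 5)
Your proof is correct and is precisely the elementary argument the paper has in mind (the paper offers no proof at all, only the remark that the proposition "follows immediately from \eqref{eqq.concrete_measure}"). The key observation that $\|f|L_{p}(\mathfrak{m}_{0})\|^{p} = \sum_{i=1}^{N}\|f|L_{p}(\mathcal{H}_{\theta_{i}}\lfloor_{S^{i}})\|^{p}$ by countable additivity of the integral over the sum of measures, followed by $\ell^{p}\hookrightarrow\ell^{1}$ and H\"older on $N$ scalars, is exactly the bookkeeping intended. You are also right to flag that the statement as printed, with $\bigcap_{i}L_{1}^{loc}(\mathcal{H}_{\theta_{i}}\lfloor_{S^{i}})$ on the right, does not make the ``only if'' direction true as an equivalence: $L_{1}^{loc}$ of each piece does not force $L_{p}^{loc}$ of the sum, so the intersection should indeed be over $L_{p}^{loc}(\mathcal{H}_{\theta_{i}}\lfloor_{S^{i}})$, as the norm inequality in the second display requires; this looks like a typographical slip in the paper.
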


Furthermore, given $k \in \mathbb{N}_{0}$, we will occasionally compare the measures $\mathfrak{m}_{k}\lfloor_{S^{i}}$ and $2^{k(\theta-\theta_{i})}\mathcal{H}_{\theta_{i}}\lfloor_{S^{i}}$.
\begin{Prop}
\label{Prop.comparison_of_measures}
Given $c \geq 1$, there is a constant $C > 0$ such that, for each $i \in \{1,...,N\}$, the following holds. If $\underline{x} \in \operatorname{X}$ and
$x \in S^{i}$ are such that $B_{k}(x) \subset cB_{k}(\underline{x})$, then
\begin{equation}
\label{eqq.comparison_of_measures}
2^{k(\theta-\theta_{i})}\mathcal{H}_{\theta_{i}}(cB_{k}(\underline{x}) \cap S^{i}) \le \mathfrak{m}_{k}(cB_{k}(\underline{x})) \le C 2^{k(\theta-\theta_{i})}\mathcal{H}_{\theta_{i}}(B_{k}(x) \cap S^{i}).
\end{equation}
\end{Prop}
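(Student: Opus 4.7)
The plan is to handle the two inequalities in \eqref{eqq.comparison_of_measures} separately. The left-hand inequality is immediate: by \eqref{eqq.concrete_measure} the measure $\mathfrak{m}_{k}$ is a sum of nonnegative Borel measures, and $2^{k(\theta-\theta_{i})}\mathcal{H}_{\theta_{i}}\lfloor_{S^{i}}$ is one of the summands, so $\mathfrak{m}_{k}(cB_{k}(\underline{x})) \geq 2^{k(\theta-\theta_{i})}\mathcal{H}_{\theta_{i}}(cB_{k}(\underline{x}) \cap S^{i})$.

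For the right-hand inequality I would expand $\mathfrak{m}_{k}(cB_{k}(\underline{x})) = \sum_{j=1}^{N} 2^{k(\theta-\theta_{j})}\mathcal{H}_{\theta_{j}}(cB_{k}(\underline{x}) \cap S^{j})$ and estimate each summand against $2^{k\theta}\mu(B_{k}(x))$. Fix $j$; if $cB_{k}(\underline{x}) \cap S^{j} = \emptyset$ the term is zero, so pick $y_{j} \in cB_{k}(\underline{x}) \cap S^{j}$. Then $cB_{k}(\underline{x}) \subset B_{2c\cdot 2^{-k}}(y_{j})$, and the upper Ahlfors--David bound in \eqref{eqq.Ahlfors_David} (applied at $y_{j} \in S^{j}$) gives
\begin{equation}
\notag
\mathcal{H}_{\theta_{j}}(cB_{k}(\underline{x}) \cap S^{j}) \le C\,\frac{\mu(B_{2c\cdot 2^{-k}}(y_{j}))}{(2^{-k})^{\theta_{j}}}.
\end{equation}
Since $\operatorname{d}(x,y_{j}) \leq 2c\cdot 2^{-k}$ (both lie in $cB_{k}(\underline{x})$ after using $B_{k}(x) \subset cB_{k}(\underline{x})$), we have $B_{2c\cdot 2^{-k}}(y_{j}) \subset B_{4c\cdot 2^{-k}}(x)$, and the uniformly locally doubling property \eqref{eqq.doubling} of $\mu$ yields $\mu(B_{2c\cdot 2^{-k}}(y_{j})) \leq C\mu(B_{k}(x))$. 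Multiplying by $2^{k(\theta-\theta_{j})}$ gives $2^{k(\theta-\theta_{j})}\mathcal{H}_{\theta_{j}}(cB_{k}(\underline{x}) \cap S^{j}) \leq C\,2^{k\theta}\mu(B_{k}(x))$.

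To convert the right-hand side into the desired quantity, I would use the \emph{lower} Ahlfors--David bound in \eqref{eqq.Ahlfors_David} for $S^{i}$ at the point $x \in S^{i}$ and radius $2^{-k} \in (0,1]$, which gives $\mathcal{H}_{\theta_{i}}(B_{k}(x) \cap S^{i}) \geq c\,2^{k\theta_{i}}\mu(B_{k}(x))$, i.e.
\begin{equation}
\notag
2^{k\theta}\mu(B_{k}(x)) \le C\,2^{k(\theta-\theta_{i})}\mathcal{H}_{\theta_{i}}(B_{k}(x) \cap S^{i}).
\end{equation}
Chaining with the previous bound and summing the resulting $N$ estimates over $j$ produces the required inequality with a constant of the form $C\cdot N$, depending only on $c$, $N$, the doubling constant $C_{\mu}(\cdot)$, and the Ahlfors--David constants $\varkappa_{\theta_{j},1}(S^{j}), \varkappa_{\theta_{j},2}(S^{j})$.

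The only subtle point is the scale restriction $r \in (0,1]$ in Definition \ref{Def.Ahlfors_David}: for small $k$ the radius $2c\cdot 2^{-k}$ may exceed $1$. However, since $c$ is fixed there are only finitely many such $k$, and for those one can either invoke the local doubling of $\mathcal{H}_{\theta_{j}}\lfloor_{S^{j}}$ (Remark \ref{Rem.Ahlfors_doubling}) to extend the Ahlfors--David estimate up to any bounded scale, or absorb these finitely many cases into the constant $C$. I expect this bookkeeping to be the only real obstacle; the rest is a two-sided sandwiching of each summand by $2^{k\theta}\mu(B_{k}(x))$.
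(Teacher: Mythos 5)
Your proof is correct, and it takes a genuinely more elementary route than the paper's. The paper argues at the level of the abstract framework: from $B_{k}(x)\subset cB_{k}(\underline{x})$ it deduces $cB_{k}(\underline{x})\subset 2cB_{k}(x)$ and then chains $\mathfrak{m}_{k}(cB_{k}(\underline{x}))\le\mathfrak{m}_{k}(2cB_{k}(x))\le C\mathfrak{m}_{k}(B_{k}(x))\le C\,2^{k\theta}\mu(B_{k}(x))$, citing Proposition \ref{Prop.doubling_property} and condition $(\mathbf{M}2)$ for the sequence $\{\mathfrak{m}_{k}\}$; only the final step to $\mathcal{H}_{\theta_{i}}(B_{k}(x)\cap S^{i})$ uses the Ahlfors--David lower bound directly, as you do. You instead expand $\mathfrak{m}_{k}(cB_{k}(\underline{x}))$ into its $N$ summands and dominate each $2^{k(\theta-\theta_{j})}\mathcal{H}_{\theta_{j}}(cB_{k}(\underline{x})\cap S^{j})$ by $C\,2^{k\theta}\mu(B_{k}(x))$ via the upper Ahlfors--David bound at a point $y_{j}\in cB_{k}(\underline{x})\cap S^{j}$ together with local doubling of $\mu$. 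In effect you are re-deriving, from scratch, exactly the consequences of $(\mathbf{M}2)$ and Proposition \ref{Prop.doubling_property} that the paper invokes; this makes your argument more self-contained, which is a mild advantage since the paper never explicitly checks that the concrete sequence \eqref{eqq.concrete_measure} belongs to $\mathfrak{M}_{\theta}(S)$ before applying those abstract results. Your remark about the scale restriction $r\in(0,1]$ is a real bookkeeping point and you handle it correctly (finitely many exceptional scales, or Remark \ref{Rem.Ahlfors_doubling}); the paper's route faces the same issue inside Proposition \ref{Prop.doubling_property}, so neither approach escapes it. The constants end up depending on the same data ($c$, $N$, $C_{\mu}$, and the Ahlfors--David constants) in both arguments.
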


\begin{proof}
The left inequality in \eqref{eqq.comparison_of_measures} is an immediate consequence of \eqref{eqq.concrete_measure}. To prove the right inequality in \eqref{eqq.comparison_of_measures},
note that $cB_{k}(\underline{x}) \subset 2cB_{k}(x)$. Hence, using Proposition \ref{Prop.doubling_property}, condition (\textbf{M}2) of Definition \ref{Def.regular_sequence}
and Definition \ref{Def.Ahlfors_David}, we obtain
\begin{equation}
\notag
\mathfrak{m}_{k}(cB_{k}(\underline{x})) \le \mathfrak{m}_{k}(2cB_{k}(x)) \le C \mathfrak{m}_{k}(B_{k}(x)) \le C 2^{k\theta}\mu(B_{k}(x)) \le C 2^{k(\theta-\theta_{i})}\mathcal{H}_{\theta_{i}}(B_{k}(x) \cap S^{i}).
\end{equation}
This completes the proof.
\end{proof}

Given $k \in \mathbb{Z}$, we will use notation $B_{k}(x):=B_{2^{-k}}(x)$. For each $k \in \mathbb{Z}$, we fix
a maximal $2^{-k}$ separated subset $Z_{k}(S):=\{z_{k,\alpha}: \alpha \in \mathcal{A}_{k}(S)\}$ of $S$. Furthermore, for each $k \in \mathbb{Z}$ and any $\alpha \in \mathcal{A}_{k}(S)$, we put $B_{k,\alpha}:=B_{k}(z_{k,\alpha})$.
Given $k \in \mathbb{Z}$ and $i,j \in \{1,...,N\}$, we put
\begin{equation}
\label{eqq.s_set}
S^{i,j}_{k}:=\{x \in S^{i}:B_{2^{-k}}(x) \cap S^{j} \neq \emptyset\}.
\end{equation}
It is easy to see that $S^{i,j}_{k+1} \subset S^{i,j}_{k}$ for all $k \in \mathbb{Z}$.
Finally, for each $k \in \mathbb{Z}$, we define
\begin{equation}
\label{eqq.sigma_set}
\Sigma^{i,j}_{k}:=\{(y,z) \in S^{i} \times S^{j}: \operatorname{d}(y,z) \le 2^{-k}\}.
\end{equation}

Given $i \in \{1,...,N\}$ and $f \in L_{1}^{loc}(\mathcal{H}_{\theta_{i}}\lfloor_{S^{i}})$, for each $k \in \mathbb{Z}$, we introduce the following averaging
\begin{equation}
\label{eqq.averaging_1}
A^{i}_{k}(f)(x):=\fint\limits_{B_{k}(x) \cap S^{i}}f(x')\,d\mathcal{H}_{\theta_{i}}(x'), \quad x \in S^{i}.
\end{equation}
More generally, given $i,j \in \{1,...,N\}$ and $f \in L_{1}^{loc}(\mathcal{H}_{\theta_{i}}\lfloor_{S^{i}}) \cap L_{1}^{loc}(\mathcal{H}_{\theta_{j}}\lfloor_{S^{j}})$,
for each $k \in \mathbb{Z}$, we introduce the double averaging
\begin{equation}
\label{eqq.averaging_2}
A^{i,j}_{k}(f)(y,z):=\fint\limits_{B_{k}(y) \cap S^{i}}\fint\limits_{B_{k}(z) \cap S^{j}}|f(y')-f(z')|\,d\mathcal{H}_{\theta_{i}}(y')d\mathcal{H}_{\theta_{j}}(z'), \quad (y,z) \in S^{i}\times S^{j}.
\end{equation}

The following two lemmas will be keystone tools for us.

\begin{Lm}
\label{Lm.estimate_double_averaging_1}
Given $c \geq 1$, there is a constant $C > 0$ such that, for each $i,j \in \{1,...,N\}$, for each $k \in \mathbb{N}_{0}$, for any $(y,z) \in \Sigma^{i,j}_{k}$, the
following inequality
\begin{equation}
\label{eqq.4.7'}
A^{i,j}_{k}(f)(y,z) \le C \mathcal{E}_{\mathfrak{m}_{k}}(f,B).
\end{equation}
holds for every ball
$B=cB_{k}(\underline{x})$, $\underline{x} \in \operatorname{X}$ satisfying $B \supset B_{k}(y)$ and $B \supset B_{k}(z)$.
\end{Lm}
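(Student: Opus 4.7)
The plan is to reduce the double-averaged quantity $A^{i,j}_{k}(f)(y,z)$ to a single-averaged expression against the measure $\mathfrak{m}_{k}$ by first applying the triangle inequality with an arbitrary constant, and then using Proposition \ref{Prop.comparison_of_measures} to compare each Hausdorff average on $S^i$ (resp. $S^j$) with the $\mathfrak{m}_k$-average on the enlarged ball $B$.

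More concretely, I would fix an arbitrary $c \in \mathbb{R}$ and write
\begin{equation}
\notag
|f(y')-f(z')| \le |f(y')-c| + |f(z')-c|,
\end{equation}
integrate in $y'$ over $B_{k}(y) \cap S^{i}$ and in $z'$ over $B_{k}(z) \cap S^{j}$ (each against the normalized Hausdorff measure), and obtain
\begin{equation}
\notag
A^{i,j}_{k}(f)(y,z) \le \fint\limits_{B_{k}(y)\cap S^{i}}|f(y')-c|\,d\mathcal{H}_{\theta_{i}}(y') + \fint\limits_{B_{k}(z)\cap S^{j}}|f(z')-c|\,d\mathcal{H}_{\theta_{j}}(z').
\end{equation}
Now, since $y \in S^{i}$ and $B_{k}(y) \subset B = cB_{k}(\underline{x})$, Proposition \ref{Prop.comparison_of_measures} applied with $x=y$ gives
\begin{equation}
\notag
\mathfrak{m}_{k}(B) \le C\, 2^{k(\theta-\theta_{i})}\mathcal{H}_{\theta_{i}}(B_{k}(y) \cap S^{i}),
\end{equation}
and the definition \eqref{eqq.concrete_measure} yields $2^{k(\theta-\theta_{i})}\mathcal{H}_{\theta_{i}}\lfloor_{S^{i}} \le \mathfrak{m}_{k}$ as measures on $\operatorname{X}$. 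Combining these two inequalities (and using $B_{k}(y) \cap S^{i} \subset B$) gives
\begin{equation}
\notag
\fint\limits_{B_{k}(y)\cap S^{i}}|f(y')-c|\,d\mathcal{H}_{\theta_{i}}(y') \le \frac{C\, 2^{k(\theta-\theta_{i})}}{\mathfrak{m}_{k}(B)}\int\limits_{B_{k}(y)\cap S^{i}}|f(y')-c|\,d\mathcal{H}_{\theta_{i}}(y') \le C \fint\limits_{B}|f-c|\,d\mathfrak{m}_{k}.
\end{equation}
The analogous bound for the $z$-term follows by applying Proposition \ref{Prop.comparison_of_measures} with $x=z \in S^{j}$ (note $B_{k}(z) \subset B$ by hypothesis).

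Summing the two bounds and taking the infimum over $c \in \mathbb{R}$ produces $A^{i,j}_{k}(f)(y,z) \le 2C\, \mathcal{E}_{\mathfrak{m}_{k}}(f,B)$, as required. There is no genuine obstacle here: the only subtle point is making sure we invoke Proposition \ref{Prop.comparison_of_measures} separately on each side (once with the point on $S^{i}$ and once on $S^{j}$) so that the factor $2^{k(\theta-\theta_{i})}$ appearing in the numerator of the normalized Hausdorff average gets absorbed by the dominating measure $\mathfrak{m}_{k}$ on the matching piece $S^{i}$; the constant $C$ depends only on $c$ through the doubling constants used in Proposition \ref{Prop.comparison_of_measures}.
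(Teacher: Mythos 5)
Your proof is correct and essentially the same as the paper's: both rest on the domination $2^{k(\theta-\theta_i)}\mathcal{H}_{\theta_i}\lfloor_{S^i}\le\mathfrak{m}_k$ from \eqref{eqq.concrete_measure} together with the reverse bound $\mathfrak{m}_{k}(B)\lesssim 2^{k(\theta-\theta_i)}\mathcal{H}_{\theta_i}(B_k(y)\cap S^i)$ from Proposition \ref{Prop.comparison_of_measures}, the only cosmetic difference being that you split $|f(y')-f(z')|$ via the triangle inequality to land directly on $\mathcal{E}_{\mathfrak{m}_k}(f,B)$, whereas the paper keeps the double integral, normalizes it to $\mathcal{OSC}_{\mathfrak{m}_k}(f,B)$, and then invokes \eqref{eqq.averaging_comparison}. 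One small presentational slip: you reuse the letter $c$ both for the ball dilation factor in the hypothesis and for the floating real constant in the infimum defining $\mathcal{E}$; these should be distinguished.
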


\begin{proof}
Clearly, $B \subset 2cB_{k}(y)$ and $B \subset 2cB_{k}(z)$. Hence, by \eqref{eqq.concrete_measure} we have
\begin{equation}
\begin{split}
&2^{k(2\theta-\theta_{i}-\theta_{j})}\int\limits_{B_{k}(y) \cap S^{i}}\int\limits_{B_{k}(z) \cap S^{j}}|f(y')-f(z')|\,d\mathcal{H}_{\theta_{i}}(y')d\mathcal{H}_{\theta_{j}}(z')\\
&\le \int\limits_{B}\int\limits_{B}|f(y')-f(z')|\,d\mathfrak{m}_{k}(y')d\mathfrak{m}_{k}(z').
\end{split}
\end{equation}
It remains to apply Proposition \ref{Prop.comparison_of_measures} and take into account \eqref{eqq.averaging_comparison}.
\end{proof}

Having at our disposal the above lemma, we can establish a useful estimate. We recall Remark \ref{Rem.measure_plus_porosity}.
\begin{Prop}
\label{Prop.estimate_simplebesov_by_modifiedbesov}
If $\theta_{i}(S) > 0$, then, for each $\sigma \in (0,\sigma(S)]$, there exists a constant $C > 0$ such that
\begin{equation}
\label{eqq.4.8'}
\|f|\operatorname{B}^{1-\frac{\theta_{i}}{p}}_{p}(S^{i})\| \le C \operatorname{BN}_{p,\{\mathfrak{m}_{k}\},\sigma}(f) \quad \text{for all} \quad f \in L_{1}^{loc}(\mathfrak{m}_{0}).
\end{equation}
\end{Prop}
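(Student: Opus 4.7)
My plan is to bound the two summands of $\|f|\operatorname{B}^{1-\theta_i/p}_p(S^i)\| = \|f|L_p(\mathcal{H}_{\theta_i}\lfloor_{S^i})\| + \mathcal{BN}^{1-\theta_i/p}_p(f)$ separately. The $L_p$ piece is immediate: by \eqref{eqq.concrete_measure} one has the set-wise inequality $\mathcal{H}_{\theta_i}\lfloor_{S^i} \le \mathfrak{m}_0$, so $\|f|L_p(\mathcal{H}_{\theta_i}\lfloor_{S^i})\| \le \|f|L_p(\mathfrak{m}_0)\|$, which is already a summand of $\operatorname{BN}_{p,\{\mathfrak{m}_k\},\sigma}(f)$. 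All of the real work goes into controlling the Besov seminorm $\mathcal{BN}^{1-\theta_i/p}_p(f)$ by the second summand of \eqref{eq.main3}.

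The target pointwise estimate is $\mathcal{E}_{\mathcal{H}_{\theta_i}\lfloor_{S^i}}(f, B_k(x)) \le C\, \mathcal{E}_{\mathfrak{m}_k}(f, B_k(x))$ for every $x \in S^i$ and $k \in \mathbb{N}_0$. I would derive this by applying Lemma \ref{Lm.estimate_double_averaging_1} in the degenerate case $j = i$, $y = z = x$, choosing the smallest admissible ball $B = B_k(x)$ (that is, taking $c = 1$, permissible because $B_k(y)$ and $B_k(z)$ then coincide with $B_k(x)$). Since $A^{i,i}_k(f)(x,x) = \mathcal{OSC}_{\mathcal{H}_{\theta_i}\lfloor_{S^i}}(f, B_k(x))$, combining with \eqref{eqq.averaging_comparison} gives the claim. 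The choice $c = 1$ is the main subtle point of the argument: Proposition \ref{Prop.oscillation_comparison} only compares $\mathcal{E}_{\mathfrak{m}_k}$ on two balls of scale $\epsilon^k$ in the direction small-ball $\le$ large-ball, so any $c > 1$ in Lemma \ref{Lm.estimate_double_averaging_1} would strand me with $\mathcal{E}_{\mathfrak{m}_k}(f, cB_k(x))$ which I could not reduce back to $\mathcal{E}_{\mathfrak{m}_k}(f, B_{\epsilon^k}(x))$.

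With the pointwise bound in hand, I would raise it to the $p$-th power, integrate against $2^{k(p-\theta_i)}\,d\mathcal{H}_{\theta_i}(x)$ over $S^i$, and switch the reference measure using $\mathcal{H}_{\theta_i}\lfloor_{S^i} \le 2^{k(\theta_i-\theta)}\mathfrak{m}_k$, a direct consequence of \eqref{eqq.concrete_measure}. The absorbed factor $2^{k(\theta_i-\theta)}$ merges with $2^{k(p-\theta_i)}$ to yield $2^{k(p-\theta)} = \epsilon^{k(\theta-p)}$ with $\epsilon = 1/2$, which is precisely the weight appearing in \eqref{eq.main3}, while at the same scale $B_k(x) = B_{\epsilon^k}(x)$. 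To close, I would enlarge the integration domain from $S^i$ to $S_{\epsilon^k}(\sigma)$: in the present setting (all $\theta_j(S) > 0$, so that $\sigma(S)$ is defined by Remark \ref{Rem.measure_plus_porosity}), $S$ itself is $\sigma(S)$-porous, hence $S^i \subset S = S_{\epsilon^k}(\sigma)$ for every $k$ and every $\sigma \in (0,\sigma(S)]$. Summing over $k$ then produces the required bound by $\operatorname{BN}_{p,\{\mathfrak{m}_k\},\sigma}(f)$, and the proof is complete.
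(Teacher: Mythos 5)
Your proof is correct and follows the same route as the paper: the pointwise bound $\mathcal{E}_{\mathcal{H}_{\theta_i}\lfloor_{S^i}}(f,B_k(x)) \le C\,\mathcal{E}_{\mathfrak{m}_k}(f,B_k(x))$ via Lemma~\ref{Lm.estimate_double_averaging_1} with $i=j$, $y=z=x$, $c=1$, then the measure switch $\mathcal{H}_{\theta_i}\lfloor_{S^i} \le 2^{k(\theta_i-\theta)}\mathfrak{m}_k$ to match the weight $2^{k(p-\theta)}$, and finally the porosity of $S$ to absorb the integral into the sum over $S_{\epsilon^k}(\sigma)$. You are in fact slightly more careful than the paper at the last step: the paper invokes porosity of $S^i$, but what is really needed is that $B_{\epsilon^k}(x)\setminus S$ (not merely $B_{\epsilon^k}(x)\setminus S^i$) contains a large ball, i.e.\ that the full union $S$ is $\sigma$-porous; this is exactly what Remark~\ref{Rem.measure_plus_porosity} provides under the standing assumption $\theta_1(S)>0$, and it is the justification you correctly supply.
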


\begin{proof}
We apply Lemma \ref{Lm.estimate_double_averaging_1} with $i=j$, $c=1$ and $y=z=x$. This gives existence of a constant $C > 0$ such that for each $f \in L_{1}^{loc}(\mathfrak{m}_{0})$,
\begin{equation}
\notag
\mathcal{E}_{\mathcal{H}_{\theta_{i}}\lfloor_{S^{i}}}(f,B_{k}(x)) \le C \mathcal{E}_{\mathfrak{m}_{k}}(f,B_{k}(x)), \quad k\in \mathbb{N}_{0}, \quad x \in S^{i}.
\end{equation}
This inequality immediately implies existence of a constant $C > 0$ such that
\begin{equation}
\begin{split}
\label{eqq.4.9'}
&\sum\limits_{k=1}^{\infty}2^{kp(1-\frac{\theta_{i}}{p})}\int\limits_{S^{i}}(\mathcal{E}_{\mathcal{H}_{\theta_{i}}\lfloor_{S^{i}}}(f,B_{k}(x)))^{p}\,d\mathcal{H}_{\theta_{i}}(x)\\
&\le C  \sum\limits_{k=1}^{\infty}2^{k(p-\theta)}\int\limits_{S^{i}}(\mathcal{E}_{\mathfrak{m}_{k}}(f,B_{k}(x)))^{p}\,d\mathfrak{m}_{k}(x), \quad f \in L_{1}^{loc}(\mathfrak{m}_{0}).
\end{split}
\end{equation}
Since $S^{i}$ is $\sigma$-porous (we recall Proposition \ref{Prop.measure_plus_porosity}), there is $C > 0$ such that, for each $f \in L_{1}^{loc}(\mathfrak{m}_{0})$, the right-hand of \eqref{eqq.4.9'} can be estimated from above by $C\mathcal{BN}_{p,\{\mathfrak{m}_{k}\},\sigma}(f)$.
Combining this observation with Proposition \ref{Prop.different_lp_norms} and Definition \ref{Def.Besov} we get the required estimate completing the
proof.
\end{proof}

For the next results we establish the following combinatorial assertion.
\begin{Prop}
\label{Prop.combinatorial_trick}
Let $k \in \mathbb{N}_{0}$, $\underline{x} \in \operatorname{X}$ and $c \geq 1$ be such that $cB_{k}(\underline{x}) \cap S \neq \emptyset$.
Then there is an index set $\mathcal{I} \subset \{1,...,N\}$ and a number $\overline{i} \in \{1,...,N+1\}$ such that the following holds.
For each $i \in \mathcal{I}$ there is a point $x_{i} \in S^{i}$ such that $B_{k}(x_{i}) \subset (c+\overline{i})B_{k}(\underline{x})$ and, furthermore,
$(c+\overline{i})B_{k}(\underline{x}) \cap S^{j} = \emptyset$ for all $j \in \{1,...,N\} \setminus \mathcal{I}$.
\end{Prop}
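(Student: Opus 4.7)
The approach is a pigeonhole argument on the distances from $\underline{x}$ to the individual pieces $S^{j}$. First I would set $d_{j} := \inf\{\operatorname{d}(\underline{x}, y) : y \in S^{j}\}$ for each $j \in \{1, \ldots, N\}$; the hypothesis $cB_{k}(\underline{x}) \cap S \neq \emptyset$ guarantees that at least one of the $d_{j}$ satisfies $d_{j} \le c \cdot 2^{-k}$, because every point of $S$ belongs to some $S^{j}$. I would then introduce the $N+2$ thresholds $r_{\ell} := (c + \ell) \cdot 2^{-k}$ for $\ell \in \{0, 1, \ldots, N+1\}$ and the counts $n_{\ell} := \#\{j : d_{j} \le r_{\ell}\}$, noting that the sequence $(n_{\ell})$ is non-decreasing with $n_{0} \ge 1$ and $n_{N+1} \le N$.

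Since the $N+1$ non-negative integer increments $n_{\ell} - n_{\ell-1}$ sum to $n_{N+1} - n_{0} \le N-1$, at least one of them must vanish, yielding an index $\overline{i} \in \{1, \ldots, N+1\}$ for which no $d_{j}$ lies in the half-open interval $(r_{\overline{i}-1}, r_{\overline{i}}]$. With this $\overline{i}$ fixed I would declare $\mathcal{I} := \{j \in \{1, \ldots, N\} : d_{j} \le r_{\overline{i}-1}\}$. For $j \in \mathcal{I}$, using that $(\operatorname{X}, \operatorname{d})$ is proper (completeness together with the uniformly locally doubling condition \eqref{eqq.doubling} imply that closed bounded sets are compact), I would pick $x_{j} \in S^{j}$ attaining $\operatorname{d}(\underline{x}, x_{j}) = d_{j} \le (c + \overline{i} - 1)\cdot 2^{-k}$; the triangle inequality for closed balls then yields $B_{k}(x_{j}) \subset (c + \overline{i})B_{k}(\underline{x})$. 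For $j \notin \mathcal{I}$ the gap forces $d_{j} > r_{\overline{i}} = (c + \overline{i})\cdot 2^{-k}$, so $(c + \overline{i})B_{k}(\underline{x}) \cap S^{j} = \emptyset$, which is exactly the second conclusion.

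There is no substantive obstacle; the argument is purely combinatorial once the distances $d_{j}$ are in hand. The only minor subtlety is the attainment of the infimum used to produce $x_{j}$: if one wishes to avoid invoking properness, the slack in the gap condition allows one to weaken the defining condition for $\mathcal{I}$ to $d_{j} < r_{\overline{i}-1} + \delta$ for a sufficiently small rational $\delta > 0$ and to select any $\delta$-near minimizer in $S^{j}$, at the harmless cost of replacing $\overline{i}$ by a slightly larger index still bounded by $N+1$.
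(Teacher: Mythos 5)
Your proof is correct and reaches the same gap the paper's argument locates, but you organize the argument differently. The paper runs a greedy expansion: starting from $\mathcal{I}^{0}=\{i:\, S^{i}\cap cB_{k}(\underline{x})\neq\emptyset\}$, it enlarges the radius by one unit at a time and stops the first time the larger ball $(c+l+1)B_{k}(\underline{x})$ captures no new piece; since each non-stopping step strictly enlarges $\mathcal{I}^{l}\subset\{1,\dots,N\}$, the procedure halts within $N$ steps. You instead reason directly on the distances $d_{j}=\operatorname{dist}(\underline{x},S^{j})$ and pigeonhole: among the $N+1$ half-open annuli $(r_{\ell-1},r_{\ell}]$ at most $N-1$ can contain some $d_{j}$, so one annulus is empty, and that is the gap. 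Both arguments isolate exactly the same combinatorial fact (existence of a radius increment capturing no new $S^{j}$), so this is a matter of presentation rather than a substantive departure. One small advantage of the paper's formulation is that the point $x_{i}$ is produced directly from the non-empty intersection $S^{i}\cap(c+l)B_{k}(\underline{x})$ with a \emph{closed} ball, so no appeal to attainment of an infimum is needed; your version does need the closest point, and your invocation of properness (which indeed follows from completeness and uniform local doubling with $\operatorname{supp}\mu=\operatorname{X}$) is a legitimate way to get it. Your suggested workaround for avoiding properness, however, is imprecise as stated: weakening the membership test to $d_{j}<r_{\overline{i}-1}+\delta$ and bumping $\overline{i}$ up by one does not automatically preserve the emptiness condition on the complementary indices, since a $j\notin\mathcal{I}$ is only known to satisfy $d_{j}>r_{\overline{i}}$, not $d_{j}>r_{\overline{i}+1}$. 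This is a side remark and does not affect the validity of your main argument, but if you want a properness-free variant the cleanest fix is simply to define $\mathcal{I}$ by the closed-ball condition $S^{j}\cap(c+\overline{i}-1)B_{k}(\underline{x})\neq\emptyset$ rather than via $d_{j}$, which is what the paper implicitly does.
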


\begin{proof}
Given $l \in \{0,...,N\}$, let $\mathcal{I}^{l} \subset \{1,...,N\}$ be such that $S^{i} \cap (c+l)B_{k}(\underline{x}) \neq \emptyset$ for all $i \in \mathcal{I}^{l}$.
Now we consider the ball $(c+l+1)B_{k}(\underline{x})$. Clearly, for each $i \in \mathcal{I}^{l}$ there is $x_{i} \in S^{i}$ such that
$B_{k}(x_{i}) \subset (c+l+1)B_{k}(\underline{x})$. If $(c+l+1)B_{k}(\underline{x}) \cap S^{j} = \emptyset$ for all $j \in \{1,...,N\} \setminus \mathcal{I}^{l}$ then
we stop and put $\mathcal{I}:=\mathcal{I}^{l}$ and $\overline{i}:=l+1$. Otherwise, we repeat this procedure with $l$ replaced by $l+1$.

Clearly, since $N < +\infty$, starting from $l=0$, we find $i \in \{0,...,N\}$ such that the above procedure stops after $i$ steps. This proves the claim.
\end{proof}

\begin{Lm}
\label{Lm.estimate_double_averaging_2}
Given $c \geq 1$, there is a constant $C > 0$ such that if a ball $B=cB_{k}(\underline{x})$ and an index set $\mathcal{I} \subset \{1,...,N\}$ are
such that $cB_{k}(x) \supset B_{k}(x_{i})$ with $x_{i} \in S^{i}$ for all $i \in \mathcal{I}$ and $cB_{k}(x) \cap S^{j} = \emptyset$ for all $j \in \{1,...,N\} \setminus \mathcal{I}$, then
\begin{equation}
\label{eqq.4.9}
\begin{split}
&\mathcal{E}_{\mathfrak{m}_{k}}(f,B) \le C \Bigl(\sum\limits_{i \in \mathcal{I}}\mathcal{E}_{\mathcal{H}_{\theta_{i}}\lfloor_{S^{i}}}(f,B)+\sum\limits_{\substack{i,j \in \mathcal{I}\\ i \neq j}}\fint\limits_{B \cap S^{i}}\fint\limits_{B \cap S^{j}}|f(y')-f(z')|\,d\mathcal{H}_{\theta_{i}}(y')d\mathcal{H}_{\theta_{j}}(z')\Bigr).
\end{split}
\end{equation}
%holds for any ball $B=cB_{k}(\underline{x})$, $\underline{x} \in \operatorname{X}$ satisfying inclusions $B \supset B_{k}(y)$  and $B \supset B_{k}(z)$ for some $y \in S^{i}$ and $z \in S^{j}$.
\end{Lm}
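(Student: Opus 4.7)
My plan is to bound $\mathcal{E}_{\mathfrak{m}_{k}}(f,B)$ by the symmetric oscillation $\mathcal{OSC}_{\mathfrak{m}_{k}}(f,B)$ via \eqref{eqq.averaging_comparison}, and then expand this oscillation using the explicit decomposition $\mathfrak{m}_{k}=\sum_{i=1}^{N}2^{k(\theta-\theta_{i})}\mathcal{H}_{\theta_{i}}\lfloor_{S^{i}}$ from \eqref{eqq.concrete_measure}. The key structural input from the hypothesis is that $B\cap S^{j}=\emptyset$ for all $j\notin\mathcal{I}$, so $\mathfrak{m}_{k}\lfloor_{B}$ is supported on $\bigcup_{i\in\mathcal{I}}(B\cap S^{i})$ and the double integral $\int_{B}\int_{B}|f(y')-f(z')|\,d\mathfrak{m}_{k}(y')d\mathfrak{m}_{k}(z')$ splits into a finite sum of at most $N^{2}$ pieces indexed by pairs $(i,j)\in\mathcal{I}\times\mathcal{I}$, each of the form
\[
2^{k(2\theta-\theta_{i}-\theta_{j})}\int\limits_{B\cap S^{i}}\int\limits_{B\cap S^{j}}|f(y')-f(z')|\,d\mathcal{H}_{\theta_{i}}(y')d\mathcal{H}_{\theta_{j}}(z').
\]

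For the diagonal contributions ($i=j$) I would simply use $2^{k(\theta-\theta_{i})}\mathcal{H}_{\theta_{i}}(B\cap S^{i})\le\mathfrak{m}_{k}(B)$, which is immediate from the definition of $\mathfrak{m}_{k}$. After dividing by $\mathfrak{m}_{k}(B)^{2}$ this yields the bound
\[
\frac{2^{k(2\theta-2\theta_{i})}}{\mathfrak{m}_{k}(B)^{2}}\int\limits_{B\cap S^{i}}\int\limits_{B\cap S^{i}}|f(y')-f(z')|\,d\mathcal{H}_{\theta_{i}}(y')d\mathcal{H}_{\theta_{i}}(z')\le \mathcal{OSC}_{\mathcal{H}_{\theta_{i}}\lfloor_{S^{i}}}(f,B) \le 2\mathcal{E}_{\mathcal{H}_{\theta_{i}}\lfloor_{S^{i}}}(f,B),
\]
again by \eqref{eqq.averaging_comparison}, which produces the first sum on the right-hand side of \eqref{eqq.4.9}.

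For the off-diagonal contributions ($i\ne j$, both in $\mathcal{I}$) I invoke Proposition \ref{Prop.comparison_of_measures}. Since $B_{k}(x_{i})\subset B$ with $x_{i}\in S^{i}$, that proposition gives $\mathfrak{m}_{k}(B)\le C\,2^{k(\theta-\theta_{i})}\mathcal{H}_{\theta_{i}}(B_{k}(x_{i})\cap S^{i})\le C\,2^{k(\theta-\theta_{i})}\mathcal{H}_{\theta_{i}}(B\cap S^{i})$, and analogously with $j$ in place of $i$. Multiplying the two inequalities yields
\[
\frac{2^{k(2\theta-\theta_{i}-\theta_{j})}}{\mathfrak{m}_{k}(B)^{2}}\le\frac{C}{\mathcal{H}_{\theta_{i}}(B\cap S^{i})\,\mathcal{H}_{\theta_{j}}(B\cap S^{j})},
\]
which converts the $(i,j)$-piece to exactly the mixed double average appearing as the second sum in \eqref{eqq.4.9}, up to a constant depending only on $c$ and $N$. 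Summing the $|\mathcal{I}|^{2}\le N^{2}$ pieces and using \eqref{eqq.averaging_comparison} once more completes the estimate.

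I do not expect any serious obstacle; the only delicate point is the bookkeeping of the weights $2^{k(\theta-\theta_{i})}$ against the normalizing factor $\mathfrak{m}_{k}(B)^{-2}$. Both the diagonal and off-diagonal estimates ultimately reduce to the same principle, namely that a single summand of $\mathfrak{m}_{k}(B)$ controls the total, either trivially (diagonal) or via the already-established doubling-type comparison of Proposition \ref{Prop.comparison_of_measures} (off-diagonal). The finiteness of $N$ and the emptiness assumption for indices outside $\mathcal{I}$ are what keep the implicit constant finite and genuinely dependent only on $c$ and $N$.
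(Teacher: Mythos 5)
Your overall architecture matches the paper's proof exactly: expand $\mathcal{OSC}_{\mathfrak{m}_{k}}(f,B)$ via \eqref{eqq.concrete_measure} into a finite sum over pairs $(i,j)\in\mathcal{I}\times\mathcal{I}$, bound the diagonal terms by $\mathcal{OSC}_{\mathcal{H}_{\theta_{i}}\lfloor_{S^{i}}}(f,B)$, bound the off-diagonal terms by the mixed double averages, and finish via \eqref{eqq.averaging_comparison}. The diagonal estimate is correct as written. The off-diagonal step, however, has the inequality in the wrong direction.

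You quote the \emph{right-hand} inequality of Proposition \ref{Prop.comparison_of_measures}, namely $\mathfrak{m}_{k}(B)\le C\,2^{k(\theta-\theta_{i})}\mathcal{H}_{\theta_{i}}(B\cap S^{i})$ for $i$ and for $j$, and claim that multiplying them yields $2^{k(2\theta-\theta_{i}-\theta_{j})}/\mathfrak{m}_{k}(B)^{2}\le C/\bigl(\mathcal{H}_{\theta_{i}}(B\cap S^{i})\,\mathcal{H}_{\theta_{j}}(B\cap S^{j})\bigr)$. Multiplying your two cited inequalities actually gives $\mathfrak{m}_{k}(B)^{2}\le C^{2}\,2^{k(2\theta-\theta_{i}-\theta_{j})}\mathcal{H}_{\theta_{i}}(B\cap S^{i})\,\mathcal{H}_{\theta_{j}}(B\cap S^{j})$, which is the \emph{reverse} of the bound you need. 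The inequality you want is equivalent to $2^{k(\theta-\theta_{i})}\mathcal{H}_{\theta_{i}}(B\cap S^{i})\cdot 2^{k(\theta-\theta_{j})}\mathcal{H}_{\theta_{j}}(B\cap S^{j})\le C\,\mathfrak{m}_{k}(B)^{2}$, and it follows (with $C=1$) from two applications of the \emph{left-hand} inequality of Proposition \ref{Prop.comparison_of_measures}, i.e.\ the same trivial bound $2^{k(\theta-\theta_{i})}\mathcal{H}_{\theta_{i}}(B\cap S^{i})\le\mathfrak{m}_{k}(B)$ you already used for the diagonal term. The right-hand inequality of Proposition \ref{Prop.comparison_of_measures}, and with it the hypothesis that $B_{k}(x_{i})\subset B$ for some $x_{i}\in S^{i}$, is needed only to guarantee that $\mathcal{H}_{\theta_{i}}(B\cap S^{i})>0$, so that $\fint_{B\cap S^{i}}$ is a genuine average. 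Once this direction error is corrected, your argument closes and coincides with the paper's.
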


\begin{proof}
Using \eqref{eqq.concrete_measure} by elementary combinatorial observations we readily have
\begin{equation}
\notag
\begin{split}
&\int\limits_{B}\int\limits_{B}|f(y)-f(z)|\,d\mathfrak{m}_{k}(y)d\mathfrak{m}_{k}(z)\\
&= \sum\limits_{i,j \in \mathcal{I}}2^{k(\theta-\theta_{i})}2^{k(\theta-\theta_{j})}\int\limits_{B \cap S^{i}}\int\limits_{B \cap S^{j}}|f(y)-f(z)|\,d\mathcal{H}_{\theta_{i}}(y)d\mathcal{H}_{\theta_{j}}(z).
\end{split}
\end{equation}
Hence, taking into account Proposition \ref{Prop.comparison_of_measures} and \eqref{eqq.different_overagings}, we have
\begin{equation}
%\label{eqq.4.9}
\notag
\begin{split}
&\mathcal{OSC}_{\mathfrak{m}_{k}}(f,B) \le C \Bigl(\sum\limits_{i \in \mathcal{I}}\mathcal{OSC}_{\mathcal{H}_{\theta_{i}}\lfloor_{S^{i}}}(f,B)+\sum\limits_{\substack{i,j \in \mathcal{I}\\ i \neq j}}\fint\limits_{B \cap S^{i}}\fint\limits_{B \cap S^{j}}|f(y')-f(z')|\,d\mathcal{H}_{\theta_{i}}(y')d\mathcal{H}_{\theta_{j}}(z')\Bigr).
\end{split}
\end{equation}
Finally, taking into account \eqref{eqq.averaging_comparison} we conclude.
\end{proof}

Given $k \in \mathbb{Z}$, we define the \textit{weight function} $\operatorname{w}_{k}:\operatorname{X} \times \operatorname{X} \to [0,+\infty)$ by the equality
\begin{equation}
\label{eqq.weight}
\operatorname{w}_{k}(y,z):=\frac{1}{\sqrt{\mu(B_{k}(y))}\sqrt{\mu(B_{k}(z))}}, \quad (y,z) \in \operatorname{X} \times \operatorname{X}.
\end{equation}
An immediate consequence of \eqref{eqq.doubling} is that, for each $\underline{k} \in \mathbb{Z}$, there is a constant $C > 0$ such that, for any $k \geq \underline{k}$,
\begin{equation}
\label{eqq.weights}
\operatorname{w}_{k}(y,z) \le C\operatorname{w}_{k-1}(y,z), \quad \text{for all} \quad (y,z) \in \operatorname{X} \times \operatorname{X}.
\end{equation}
The following simple proposition shows that the weights $\operatorname{w}_{k}$, $k \in \mathbb{N}_{0}$ can not oscillate wildly.

\begin{Prop}
\label{Prop.weight_ineq}
Given $c > 0$, there is a constant $C > 0$ such that, for each $k \in \mathbb{N}_{0}$, for every pair $(y,z) \in \operatorname{X} \times \operatorname{X}$,
\begin{equation}
\label{eqq.weight_ineq_1}
\frac{1}{C}\operatorname{w}_{k}(y',z') \le \operatorname{w}_{k}(y,z) \le C \operatorname{w}_{k}(y',z') \quad \text{for all} \quad (y',z') \in cB_{k}(y)\times cB_{k}(z).
\end{equation}
\end{Prop}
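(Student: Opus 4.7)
The plan is to reduce both inequalities to the uniformly locally doubling property \eqref{eqq.doubling} of $\mu$, applied at the fixed radius $R = (c+1)$. Since the weight $\operatorname{w}_k$ is, by definition \eqref{eqq.weight}, a product of two factors each depending only on one of the points, it suffices to prove the one-variable statement: for some constant $C' > 0$ depending only on $c$ and the doubling constants of $\mu$, one has $\mu(B_k(y)) \approx \mu(B_k(y'))$ whenever $y' \in cB_k(y)$ and $k \in \mathbb{N}_0$. Once this is established, taking square roots and multiplying the analogous estimate for the $z$-pair immediately yields \eqref{eqq.weight_ineq_1}.

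The first step is the elementary geometric observation: if $y' \in cB_k(y) = B_{c \cdot 2^{-k}}(y)$, then by the triangle inequality $B_k(y) \subset (c+1)B_k(y')$ and, by symmetry, $B_k(y') \subset (c+1)B_k(y)$. The second step is to invoke doubling. Because $k \in \mathbb{N}_0$ forces $2^{-k} \le 1$, all radii involved lie in $(0, c+1]$, so iterating \eqref{eqq.doubling} a fixed number of times (determined by $\lceil \log_2(c+1)\rceil$) produces a constant $C'$ depending only on $c$ and $C_\mu(c+1)$ such that $\mu((c+1)B_k(y)) \le C' \mu(B_k(y))$ for every $y \in \operatorname{X}$ and every $k \in \mathbb{N}_0$, and similarly with $y$ replaced by $y'$. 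Combining these with the inclusions from the first step gives the two-sided comparison $\mu(B_k(y)) \le C' \mu(B_k(y'))$ and $\mu(B_k(y')) \le C' \mu(B_k(y))$.

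The third step is to perform the identical argument for the pair $(z, z')$, obtaining $\mu(B_k(z)) \approx \mu(B_k(z'))$ with the same constant $C'$. Plugging both comparisons into the definition
\[
\operatorname{w}_k(y,z) = \frac{1}{\sqrt{\mu(B_k(y))}\sqrt{\mu(B_k(z))}}
\]
and taking square roots yields \eqref{eqq.weight_ineq_1} with $C = C'$. There is no genuine obstacle here; the only point requiring mild care is ensuring that the relevant radii $2^{-k}$, $c \cdot 2^{-k}$ and $(c+1)2^{-k}$ all remain bounded by the fixed threshold $c+1$ so that the local doubling constant $C_\mu(c+1)$ can be applied uniformly in $k \in \mathbb{N}_0$. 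This is why the restriction $k \geq 0$ (rather than arbitrary $k \in \mathbb{Z}$) is essential, and it is the only place where the hypotheses of the proposition are used.
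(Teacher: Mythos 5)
Your proof is correct and follows essentially the same route as the paper: use the triangle inequality to get the mutual inclusions between $B_k(y)$ and an enlarged ball about $y'$, invoke the uniformly locally doubling property at a fixed scale, and multiply the two one-variable estimates. (Your enlargement factor $c+1$ is in fact slightly more careful than the paper's $2c$, which implicitly assumes $c\ge 1$; the proposition as stated allows any $c>0$.)
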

\begin{proof}
Note that $B_{k}(y) \subset 2cB_{k}(y')$ for every $y' \in cB_{k}(y)$. Hence, by \eqref{eqq.doubling} we easily get the left-hand inequality in \eqref{eqq.weight_ineq_1}.
Changing the role of $(y,z)$ and $(y',z')$ we get the right-hand inequality in \eqref{eqq.weight_ineq_1}.
\end{proof}

\begin{Prop}
\label{Prop.integral_of_weights}
Given $i,j \in \{1,...,N\}$, there is $C > 0$ such that, for each $k \in \mathbb{N}_{0}$,
\begin{equation}
\notag
\int\limits_{B_{k}(y) \cap S^{j}}\operatorname{w}_{k}(y,z)\,d\mathcal{H}_{\theta_{j}}(z) \le C 2^{k\theta_{j}}, \quad \text{for all} \quad y \in S^{i,j}_{k}.
\end{equation}
\end{Prop}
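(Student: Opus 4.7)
The plan is to dominate the weight $\operatorname{w}_k(y,z)$ uniformly on the domain of integration by a multiple of $1/\mu(B_k(y))$, and then to bound the codimension-$\theta_j$ Hausdorff mass $\mathcal{H}_{\theta_j}(B_k(y)\cap S^j)$ by $C\,2^{k\theta_j}\mu(B_k(y))$ using the Ahlfors--David regularity of $S^j$.

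For the first step, I would observe that if $z\in B_k(y)$ then $B_k(y)\subset 2B_k(z)$, so the uniformly locally doubling property \eqref{eqq.doubling} at $R=1$ gives $\mu(B_k(z))\ge \mu(B_k(y))/C_\mu(1)$. Combined with definition \eqref{eqq.weight}, this yields
$$
\operatorname{w}_k(y,z)\le \frac{\sqrt{C_\mu(1)}}{\mu(B_k(y))}\qquad \text{for every } z\in B_k(y).
$$

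For the second step, I use the hypothesis $y\in S^{i,j}_k$: by \eqref{eqq.s_set} there is a point $x_j\in S^j$ with $\operatorname{d}(x_j,y)\le 2^{-k}$, whence $B_k(y)\cap S^j\subset 2B_k(x_j)\cap S^j$. For $k\ge 1$, $2B_k(x_j)$ has radius at most $1$, so the right inequality in \eqref{eqq.Ahlfors_David} applied to $S^j$ at $x_j$ yields
$$
\mathcal{H}_{\theta_j}(2B_k(x_j)\cap S^j)\le \varkappa_{\theta_j,2}(S^j)\,\frac{\mu(2B_k(x_j))}{(2\cdot 2^{-k})^{\theta_j}}\le C\,2^{k\theta_j}\mu(B_k(x_j)),
$$
and since $B_k(x_j)\subset 2B_k(y)$, one more application of \eqref{eqq.doubling} gives $\mu(B_k(x_j))\le C_\mu(1)\mu(B_k(y))$. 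The edge case $k=0$ is handled identically after first passing from radius $2$ to radius $1$ at the cost of a multiplicative constant via the locally uniformly doubling property of $\mathcal{H}_{\theta_j}\lfloor_{S^j}$ recorded in Remark \ref{Rem.Ahlfors_doubling}.

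Multiplying the two displayed inequalities and integrating over $B_k(y)\cap S^j$ then produces
$$
\int_{B_k(y)\cap S^j}\operatorname{w}_k(y,z)\,d\mathcal{H}_{\theta_j}(z)\le \frac{C}{\mu(B_k(y))}\cdot C\,2^{k\theta_j}\mu(B_k(y))=C\,2^{k\theta_j},
$$
which is the desired bound. I do not foresee a serious obstacle here; the only mild annoyance is the book-keeping around the range $r\in (0,1]$ in Definition \ref{Def.Ahlfors_David}, which is why the $k=0$ case requires the brief separate argument above.
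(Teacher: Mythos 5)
Your proof is correct, and it takes a slightly different but equally valid route from the paper's. The paper freezes the weight at the integration variable: it uses Proposition~\ref{Prop.weight_ineq} with $y'=z'=z$ to get $\operatorname{w}_k(y,z)\lesssim 1/\mu(B_k(z))$, converts this to $\lesssim 2^{k\theta_j}/\mathcal{H}_{\theta_j}(B_k(z)\cap S^j)$ via the upper inequality in \eqref{eqq.Ahlfors_David}, and then invokes Remark~\ref{Rem.change_variables} (the ``change of variables'' bound $\int_{B_k(y)\cap S^j}\mathcal{H}_{\theta_j}(B_k(z)\cap S^j)^{-1}\,d\mathcal{H}_{\theta_j}(z)\le C$) to close the estimate. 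You instead freeze the weight at the fixed centre $y$: you bound $\operatorname{w}_k(y,z)\le C/\mu(B_k(y))$ uniformly in $z\in B_k(y)$ by plain doubling, so the integral collapses to $C\,\mathcal{H}_{\theta_j}(B_k(y)\cap S^j)/\mu(B_k(y))$, and you finish by showing $\mathcal{H}_{\theta_j}(B_k(y)\cap S^j)\le C\,2^{k\theta_j}\mu(B_k(y))$ using the point $x_j\in S^j$ supplied by $y\in S^{i,j}_k$ together with the upper Ahlfors--David bound. Both routes use doubling and the same side of \eqref{eqq.Ahlfors_David}; yours avoids Proposition~\ref{Prop.weight_ineq} and Remark~\ref{Rem.change_variables} entirely at the small cost of the extra step that locates $x_j$ and the $k=0$ book-keeping, which you handle correctly via Remark~\ref{Rem.Ahlfors_doubling}. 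This is arguably a bit more self-contained, though neither approach is shorter than the other.
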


\begin{proof}
Using Proposition \ref{Prop.weight_ineq} with $y'=z'=z$, Remark \ref{Rem.change_variables} (with $\theta=\theta_{j}$) and  Definition \ref{Def.Ahlfors_David} we have
\begin{equation}
\notag
\int\limits_{B_{k}(y) \cap S^{j}}\operatorname{w}_{k}(y,z)\,d\mathcal{H}_{\theta_{j}}(z) \le C 2^{k\theta_{j}} \int\limits_{B_{k}(y) \cap S^{j}}\frac{1}{\mathcal{H}_{\theta_{j}}(B_{k}(z)\cap S^{j})}\,d\mathcal{H}_{\theta_{j}}(z) \le C 2^{k\theta_{j}}.
\end{equation}
This completes the proof.
\end{proof}

Now we are ready to introduce the following \textit{gluing functionals.} Given $f \in \cap_{i=1}^{N}L_{p}(\mathcal{H}_{\theta_{i}}\lfloor_{S^{i}})$,
\begin{equation}
\label{eqq.gluing_functionals}
\begin{split}
&\mathcal{GL}^{(1)}_{p}(f):=\Bigl(\sum\limits_{i \neq j}\sum\limits_{k=1}^{\infty}2^{k(p-\theta_{i}-\theta_{j})}\iint\limits_{\Sigma^{i,j}_{k}}\operatorname{w}_{k}(y,z)|f(y)-f(z)|^{p}\,d\mathcal{H}_{\theta_{i}}(y)d\mathcal{H}_{\theta_{j}}(z)\Bigr)^{\frac{1}{p}};\\
&\mathcal{GL}^{(2)}_{p}(f):=\Bigl(\sum\limits_{i \neq j}\sum\limits_{k=1}^{\infty}2^{k(p-\theta_{i}-\theta_{j})}\iint\limits_{\Sigma^{i,j}_{k}}\operatorname{w}_{k}(y,z)|A_{k}^{i}(f)(y)-A^{j}_{k}f(z)|^{p}\,d\mathcal{H}_{\theta_{i}}(y)d\mathcal{H}_{\theta_{j}}(z)\Bigr)^{\frac{1}{p}};\\
&\mathcal{GL}^{(3)}_{p}(f):=\Bigl(\sum\limits_{i \neq j}\sum\limits_{k=1}^{\infty}2^{k(p-\theta_{i}-\theta_{j})}\iint\limits_{\Sigma^{i,j}_{k}}\operatorname{w}_{k}(y,z)(A^{i,j}_{k}(f)(y,z))^{p}\,d\mathcal{H}_{\theta_{i}}(y)d\mathcal{H}_{\theta_{j}}(z)\Bigr)^{\frac{1}{p}}.
\end{split}
\end{equation}

\begin{Remark}
\label{Rem.gluing_ineq_1}
Given $f \in \cap_{i=1}^{N}L_{p}(\mathcal{H}_{\theta_{i}}\lfloor_{S^{i}})$, it is clear that $|A_{k}^{i}(f)(y)-A^{j}_{k}(f)(z)| \le A^{i,j}_{k}(f)(y,z)$, for each $i,j \in \{1,...,N\}$, for any pair $(y,z) \in S^{i} \times S^{j}$. Hence, $\mathcal{GL}^{(2)}_{p}(f) \le \mathcal{GL}^{(3)}_{p}(f)$.
\end{Remark}

\subsection{Simple case} In this subsection, we consider the ``simple case'' when $\theta_{1} > 0$. By a ``simplicity'' we mean that
the resulting trace space will be some sort of a mixture of function \textit{``spaces of the same nature''},
i.e., the Besov spaces with different smoothness exponents. Furthermore, the corresponding trace norm will be composed of
Besov norms in combination with the special
gluing conditions between pieces of different codimension.

Since $\theta_{i} > 0$ for all $i \in \{1,...,N\}$, we can derive interesting inequalities relating different gluing functionals.

\begin{Lm}
\label{Lm.1}
There is a constant $C > 0$ such that
\begin{equation}
\begin{split}
\label{eqq.gluing_ineq_2}
&\mathcal{GL}^{(3)}_{p}(f) \le C\Bigl(\mathcal{GL}^{(1)}_{p}(f)+\sum\limits_{i=1}^{N}\|f|L_{p}(\mathcal{H}_{\theta_{i}}\lfloor_{S^{i}})\|\Bigr) \quad \text{for all} \quad f \in \cap_{i=1}^{N}L_{p}(\mathcal{H}_{\theta_{i}}\lfloor_{S^{i}}).
\end{split}
\end{equation}
\end{Lm}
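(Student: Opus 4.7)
The plan is to dominate $A^{i,j}_{k}(f)(y,z)$ by an average of pointwise differences via Jensen's inequality, swap the order of integration, and then use Ahlfors--David regularity together with doubling to collapse the resulting kernel into $\operatorname{w}_{k}(y',z')$. Concretely, Jensen gives $(A^{i,j}_{k}(f)(y,z))^{p} \le \fint_{B_{k}(y)\cap S^{i}}\fint_{B_{k}(z)\cap S^{j}}|f(y')-f(z')|^{p}\,d\mathcal{H}_{\theta_{i}}(y')\,d\mathcal{H}_{\theta_{j}}(z')$ for every $(y,z)\in\Sigma^{i,j}_{k}$.

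Substituting this into the $k$-th summand of $(\mathcal{GL}^{(3)}_{p}(f))^{p}$, I would apply Fubini to put $(y',z')$ on the outside and $(y,z)$ on the inside. For fixed $(y',z')\in S^{i}\times S^{j}$ the admissible $(y,z)$ lie in $(B_{k}(y')\cap S^{i})\times(B_{k}(z')\cap S^{j})$; combined with $\operatorname{d}(y,z)\le 2^{-k}$ this implies $\operatorname{d}(y',z')\le 3\cdot 2^{-k}$, so $(y',z')\in\Sigma^{i,j}_{k-2}$. The resulting kernel is
\[
K_{k}(y',z'):=\iint_{\substack{y\in B_{k}(y')\cap S^{i}\\ z\in B_{k}(z')\cap S^{j}}}\frac{\operatorname{w}_{k}(y,z)}{\mathcal{H}_{\theta_{i}}(B_{k}(y)\cap S^{i})\,\mathcal{H}_{\theta_{j}}(B_{k}(z)\cap S^{j})}\,d\mathcal{H}_{\theta_{i}}(y)\,d\mathcal{H}_{\theta_{j}}(z),
\]
and I would show $K_{k}(y',z')\le C\operatorname{w}_{k}(y',z')$ by combining Proposition \ref{Prop.weight_ineq} (to replace $\operatorname{w}_{k}(y,z)$ by $C\operatorname{w}_{k}(y',z')$), the Ahlfors--David regularity $\mathcal{H}_{\theta_{i}}(B_{k}(y)\cap S^{i})\approx 2^{k\theta_{i}}\mu(B_{k}(y))$ together with doubling of $\mu$ (to replace $\mu(B_{k}(y))$ by $\mu(B_{k}(y'))$) and the analogous bound for the $j$-factor; the resulting constant integrand is then integrated against a domain whose $(\mathcal{H}_{\theta_{i}}\otimes\mathcal{H}_{\theta_{j}})$-measure is at most $C\,2^{k(\theta_{i}+\theta_{j})}\mu(B_{k}(y'))\mu(B_{k}(z'))$, so all scale factors cancel.

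Multiplying by $2^{k(p-\theta_{i}-\theta_{j})}$, summing over $k\ge 1$, and performing the index shift $k\mapsto k+2$, the main part becomes a sum over $k\ge 1$ of $\Sigma^{i,j}_{k}$-integrals of $\operatorname{w}_{k+2}(y',z')|f(y')-f(z')|^{p}$; invoking \eqref{eqq.weights} with $\underline{k}=0$ to absorb $\operatorname{w}_{k+2}\le C\operatorname{w}_{k}$ produces $C(\mathcal{GL}^{(1)}_{p}(f))^{p}$. The shift leaves two boundary terms corresponding to integration over $\Sigma^{i,j}_{-1}$ and $\Sigma^{i,j}_{0}$ at a fixed macroscopic scale; on these I would use $|f(y')-f(z')|^{p}\le 2^{p-1}(|f(y')|^{p}+|f(z')|^{p})$ and verify that the inner $z'$-integral of $\operatorname{w}_{k}(y',z')$ over the enlarged ball is bounded by a constant, via Proposition \ref{Prop.integral_of_weights} together with the locally uniformly doubling property of $\mu$ (to traverse the bounded scale gap between the weight index and the integration radius). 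This yields the $\sum_{i=1}^{N}\|f|L_{p}(\mathcal{H}_{\theta_{i}}\lfloor_{S^{i}})\|$ term, and a final $p$-th root finishes the proof.

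The main technical obstacle is the kernel estimate above: one must align the constant from the weight comparison (Proposition \ref{Prop.weight_ineq}), the two codimensional Ahlfors--David normalizations of $\mathcal{H}_{\theta_{i}}\lfloor_{S^{i}}$ and $\mathcal{H}_{\theta_{j}}\lfloor_{S^{j}}$, and the doubling comparisons of $\mu$-ball measures so that the powers of $2^{k\theta_{i}}$, $2^{k\theta_{j}}$ and $\mu(B_{k}(\cdot))$ cancel exactly, leaving the clean weight $\operatorname{w}_{k}(y',z')$ with no residual scale factor.
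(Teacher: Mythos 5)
Your proposal is correct and follows essentially the same route as the paper's proof: apply H\"older/Jensen to put the $p$-th power inside the double average, use Proposition \ref{Prop.weight_ineq} to transfer the weight from $(y,z)$ to $(y',z')$, swap the order of integration, collapse the normalization factors (via Remark \ref{Rem.change_variables}, or equivalently your explicit Ahlfors--David plus doubling bookkeeping for the kernel $K_k$), shift the index using \eqref{eqq.weights}, and absorb the finitely many boundary terms into the $L_p$ norms via Proposition \ref{Prop.integral_of_weights}. The only cosmetic difference is that the paper merges the weight comparison into the pointwise step before Fubini rather than carrying out the kernel estimate separately, and your index shift by two ($\Sigma^{i,j}_{k-2}$) is in fact the sharp one, since $\operatorname{d}(y',z')\le 3\cdot 2^{-k}$.
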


\begin{proof}
We fix $f \in \cap_{i=1}^{N}L_{p}(\mathcal{H}_{\theta_{i}}\lfloor_{S^{i}})$. Given $i,j \in \{1,...,N\}$, by Proposition \ref{Prop.weight_ineq} and H\"older's inequality, for every $k \in \mathbb{N}$, we have
\begin{equation}
\notag
\operatorname{w}_{k}(y,z)(A^{i,j}_{k}(f)(y,z))^{p} \le C \fint\limits_{B_{k}(y) \cap S^{i}}\fint\limits_{B_{k}(z) \cap S^{j}}\operatorname{w}_{k}(y',z')|f(y')-f(z')|^{p}\,d\mathcal{H}_{\theta_{i}}(y')d\mathcal{H}_{\theta_{j}}(z').
\end{equation}
Clearly, $(y',z') \in \Sigma^{i,j}_{k-1}$ provided that $(y,z) \in \Sigma_{k}^{i,j}$ and $y' \in B_{k}(y) \cap S^{i}$, $z' \in B_{k}(z) \cap S^{j}$.
Hence, changing variables in the integral, using Remark \ref{Rem.change_variables} and \eqref{eqq.weights}, we get
\begin{equation}
\notag
\begin{split}
&\sum\limits_{k=2}^{\infty}\sum\limits_{i\neq j}\iint\limits_{\Sigma^{i,j}_{k}}\operatorname{w}_{k}(y,z)(A^{i,j}_{k}(f)(y,z))^{p}\,d\mathcal{H}_{\theta_{i}}(y)d\mathcal{H}_{\theta_{j}}(z)\\
&\le C \sum\limits_{k=2}^{\infty}\sum\limits_{i\neq j}\iint\limits_{\Sigma^{i,j}_{k-1}}\operatorname{w}_{k-1}(y',z')|f(y')-f(z')|^{p}\,d\mathcal{H}_{\theta_{i}}(y')d\mathcal{H}_{\theta_{j}}(z') \le C \Bigl(\mathcal{GL}^{(1)}_{p}(f)\Bigr)^{p}.
\end{split}
\end{equation}
It remains to note that by Propositions \ref{Prop.weight_ineq}, \ref{Prop.integral_of_weights} and H\"older's inequality it is easy to see that
\begin{equation}
\notag
\begin{split}
&\sum\limits_{i,j=1}^{N}\iint\limits_{\Sigma^{i,j}_{1}}\operatorname{w}_{1}(y,z)(A^{i,j}_{1}(f)(y,z))^{p}\,d\mathcal{H}_{\theta_{i}}(y)d\mathcal{H}_{\theta_{j}}(z)\\
&\le C \sum\limits_{i=1}^{N} \int\limits_{S^{i}}\fint\limits_{B_{k}(y)\cap S^{i}}|f(y')|^{p}\,d\mathcal{H}_{\theta_{i}}(y')\,d\mathcal{H}_{\theta_{i}}(y) \le C \sum\limits_{i=1}^{N}\|f|L_{p}(\mathcal{H}_{\theta_{i}}\lfloor_{S^{i}})\|^{p}.
\end{split}
\end{equation}
Collecting the above estimates we complete the proof.
\end{proof}

\begin{Lm}
\label{Lm.2}
There is a constant $C > 0$ such that
\begin{equation}
\label{eqq.4.11'}
\mathcal{GL}^{(1)}_{p}(f) \le C\Bigl(\mathcal{GL}^{(2)}_{p}(f)+\sum\limits_{i=1}^{N}\|f|\operatorname{B}^{1-\frac{\theta_{i}}{p}}_{p}(S^{i})\|\Bigr) \quad \text{for all} \quad f \in \cap_{i=1}^{N}L_{p}(\mathcal{H}_{\theta_{i}}\lfloor_{S^{i}}).
\end{equation}
\end{Lm}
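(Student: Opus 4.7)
The plan is to decompose the inner difference by inserting two averagings, so that the gluing functional $\mathcal{GL}^{(1)}_p$ splits into three pieces: one that is exactly $\mathcal{GL}^{(2)}_p$, and two symmetric terms that will be absorbed into the Besov seminorms $\|f|\operatorname{B}^{1-\theta_i/p}_p(S^i)\|$.

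More precisely, for each $k \in \mathbb{N}$, $i \neq j$ and $(y,z) \in \Sigma^{i,j}_k$ write
\begin{equation}
\notag
|f(y)-f(z)| \le |f(y)-A^i_k(f)(y)| + |A^i_k(f)(y)-A^j_k(f)(z)| + |A^j_k(f)(z)-f(z)|
\end{equation}
and raise to the $p$-th power using $(a+b+c)^p \le 3^{p-1}(a^p+b^p+c^p)$. The middle piece integrates to exactly $(\mathcal{GL}^{(2)}_p(f))^p$, so the task reduces to estimating the two outer pieces, which by symmetry is the same computation. Consider therefore the term built from $|f(y)-A^i_k(f)(y)|^p$, which depends only on $y$. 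Integrating the $z$-variable first against the weight $\operatorname{w}_k(y,z)$ over $B_k(y) \cap S^j$, I would apply Proposition \ref{Prop.integral_of_weights} to obtain a factor bounded by $C 2^{k\theta_j}$. This exactly cancels the factor $2^{-k\theta_j}$ hidden in $2^{k(p-\theta_i-\theta_j)}$, leaving a sum of the form
\begin{equation}
\notag
C\sum_{i \neq j} \sum_{k=1}^{\infty} 2^{k(p-\theta_i)} \int_{S^{i,j}_k} |f(y)-A^i_k(f)(y)|^p\,d\mathcal{H}_{\theta_i}(y).
\end{equation}

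At this point I would apply Jensen's inequality to replace $|f(y)-A^i_k(f)(y)|^p$ by $\fint_{B_k(y) \cap S^i} |f(y)-f(y')|^p \, d\mathcal{H}_{\theta_i}(y')$, and then estimate $S^{i,j}_k \subset S^i$ trivially. The resulting expression is, up to the factor $N$ coming from summing over $j \neq i$,
\begin{equation}
\notag
C\sum_{i=1}^{N}\sum_{k=1}^{\infty} 2^{kp(1-\theta_i/p)}\int_{S^i}\fint_{B_k(y)\cap S^i}|f(y)-f(y')|^p\,d\mathcal{H}_{\theta_i}(y')\,d\mathcal{H}_{\theta_i}(y),
\end{equation}
which is precisely $\sum_i (\widetilde{\mathcal{BN}}^{1-\theta_i/p}_p(f))^p$. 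By Theorem \ref{Th.equivalent_Besov_norms} applied with $s = 1 - \theta_i/p \in (0,1)$ (which is positive since $\theta_i < p$ and strictly less than $1$ for all relevant $i$) this is comparable to $\sum_i \|f|\operatorname{B}^{1-\theta_i/p}_p(S^i)\|^p$. Treating the $|A^j_k(f)(z) - f(z)|^p$ piece in exactly the same way, with the roles of $(i,y)$ and $(j,z)$ swapped, completes the decomposition.

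The main technical point will be the application of Proposition \ref{Prop.integral_of_weights} to kill the $z$-integration: one needs the weight $\operatorname{w}_k(y,z)$ to be essentially controlled by $1/\mathcal{H}_{\theta_j}(B_k(z)\cap S^j)$ on $B_k(y) \cap S^j$, which relies on Proposition \ref{Prop.weight_ineq} together with the Ahlfors--David regularity of $S^j$ (hence implicitly uses $\theta_j > 0$, consistent with the ``simple case'' hypothesis). Minor bookkeeping is also needed for the $k=1$ term of the Besov norm, where the average $A^i_1(f)$ should be controlled by $\|f|L_p(\mathcal{H}_{\theta_i}\lfloor_{S^i})\|$ via Lemma \ref{Lm.estimate_by_lp_norms} to fit inside the full Besov norm in \eqref{eqq.4.11'}. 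Everything else is a direct manipulation of integrals analogous to the proof of Lemma \ref{Lm.1}.
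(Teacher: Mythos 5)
Your argument is correct and is essentially the same as the paper's: the same three-term decomposition via $A^i_k(f)(y)$ and $A^j_k(f)(z)$, the same use of Proposition \ref{Prop.integral_of_weights} to integrate out the partner variable and produce the factor $2^{k\theta_j}$, the same Jensen/H\"older step, and the same appeal to Theorem \ref{Th.equivalent_Besov_norms} to land on the Besov norm. The only small inaccuracy is your remark about ``the $k=1$ term of the Besov norm'': since both the gluing sum and $\widetilde{\mathcal{BN}}^{s}_p$ already start at $k=1$ and Theorem \ref{Th.equivalent_Besov_norms} gives $\|f|L_p\| + \widetilde{\mathcal{BN}}^{s}_p(f) \approx \|f|\operatorname{B}^{s}_p(S^i)\|$, no separate control of $A^i_1(f)$ is actually needed.
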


\begin{proof}
By the triangle inequality and H\"older's inequality for sums, given $k \in \mathbb{N}_{0}$, for each pair $(y,z) \in \Sigma^{i,j}_{k}$,
\begin{equation}
\notag
%\label{eqq.4.15}
\begin{split}
&\operatorname{w}_{k}(y,z)|f(y)-f(z)|^{p} \le 3^{p-1}\operatorname{w}_{k}(y,z)\Bigl(|f(y)-A^{i}_{k}(f)(y)|^{p}\\
&+|f(z)-A^{j}_{k}(f)(z)|^{p}+|A^{i}_{k}(f)(y)-A^{j}_{k}(f)(z)|^{p}\Bigr).
\end{split}
\end{equation}
Hence, we have
\begin{equation}
\label{eqq.4.15}
\begin{split}
\iint\limits_{\Sigma^{i,j}_{k}}\operatorname{w}_{k}(y,z)|f(y)-f(z)|^{p}\,d\mathcal{H}_{\theta_{i}}(y)d\mathcal{H}_{\theta_{j}}(z) \le C \Bigl(J^{i,j}_{k}(1)+J^{i,j}_{k}(2)+J^{i,j}_{k}(3)\Bigr),
\end{split}
\end{equation}
where we set
\begin{equation}
\label{eqq.4.16}
\begin{split}
&J^{i,j}_{k}(1):=\iint\limits_{\Sigma^{i,j}_{k}}\operatorname{w}_{k}(y,z)|f(y)-A^{i}_{k}(f)(y)|^{p}\,d\mathcal{H}_{\theta_{i}}(y)d\mathcal{H}_{\theta_{j}}(z),\\
&J^{i,j}_{k}(2):=\iint\limits_{\Sigma^{i,j}_{k}}\operatorname{w}_{k}(y,z)|f(z)-A^{j}_{k}(f)(z)|^{p}\,d\mathcal{H}_{\theta_{i}}(y)d\mathcal{H}_{\theta_{j}}(z),\\
&J^{i,j}_{k}(3):=\iint\limits_{\Sigma^{i,j}_{k}}\operatorname{w}_{k}(y,z)|A^{i}_{k}(f)(y)-A^{j}_{k}(f)(z)|^{p}\,d\mathcal{H}_{\theta_{i}}(y)d\mathcal{H}_{\theta_{j}}(z).
\end{split}
\end{equation}

Thus, using Proposition \ref{Prop.integral_of_weights}, H\"older's inequality and taking into account \eqref{eqq.averaging_1} we deduce
\begin{equation}
\label{eqq.4.17}
\begin{split}
&J^{i,j}_{k}(1) \le C 2^{k\theta_{j}} \int\limits_{S^{i,j}_{k}}|f(y)-A^{i}_{k}(f)(y)|^{p}\,d\mathcal{H}_{\theta_{i}}(y) \le C 2^{k\theta_{j}} \int\limits_{S^{i,j}_{k}}\fint\limits_{B_{k}(y)\cap S^{i}}|f(y)-f(y')|^{p}\,d\mathcal{H}_{\theta_{i}}(y')\,d\mathcal{H}_{\theta_{i}}(y)\\
& \le C 2^{k\theta_{j}} \int\limits_{S^{i}}\fint\limits_{B_{k}(y)\cap S^{i}}|f(y)-f(y')|^{p}\,d\mathcal{H}_{\theta_{i}}(y')\,d\mathcal{H}_{\theta_{i}}(y).
\end{split}
\end{equation}
Similar arguments give
\begin{equation}
\label{eqq.4.18}
\begin{split}
J^{i,j}_{k}(2) \le  C 2^{k\theta_{i}} \int\limits_{S^{j}}\fint\limits_{B_{k}(z)\cap S^{j}}|f(z)-f(z')|^{p}\,d\mathcal{H}_{\theta_{j}}(z')\,d\mathcal{H}_{\theta_{j}}(z).
\end{split}
\end{equation}
As a result, combining estimates \eqref{eqq.4.17}, \eqref{eqq.4.18} and taking into account Theorem \ref{Th.equivalent_Besov_norms} we have
\begin{equation}
\label{eqq.4.19}
\sum\limits_{i \neq j}\sum\limits_{k=1}^{\infty}2^{k(p-\theta_{i}-\theta_{j})}(J^{i,j}_{k}(1)+J^{i,j}_{k}(2)) \le C \sum\limits_{i=1}^{N}\|f|\operatorname{B}^{1-\frac{\theta_{i}}{p}}_{p}(S^{i})\|^{p}.
\end{equation}

Finally, collecting estimates \eqref{eqq.4.15}, \eqref{eqq.4.16}, \eqref{eqq.4.19} and taking into account \eqref{eqq.gluing_functionals}
we arrive at the required estimate \eqref{eqq.4.11'} completing the proof.

\end{proof}

Now we establish the first keystone result of this subsection. We recall Remark \ref{Rem.measure_plus_porosity}.

\begin{Th}
\label{Th.1}
For each $\sigma \in (0,\sigma(S)]$, there is a constant $C > 0$ such that
\begin{equation}
\label{eqq.4.20''}
\operatorname{BN}_{p,\{\mathfrak{m}_{k}\},\sigma}(f) \le C \Bigl(\sum\limits_{i=1}^{N}\|f|\operatorname{B}_{p}^{1-\frac{\theta_{i}}{p}}(S^{i})\|+\mathcal{GL}^{(1)}_{p}(f)\Bigr) \quad \text{for all} \quad
f \in \cap_{i=1}^{N}L_{p}(\mathcal{H}_{\theta_{i}}\lfloor_{S^{i}}).
\end{equation}
\end{Th}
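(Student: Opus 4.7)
The plan is to bound separately the three contributions to $\operatorname{BN}_{p,\{\mathfrak{m}_k\},\sigma}(f)$: the $L_p(\mathfrak{m}_0)$-norm, the $L_p(S,\mu)$-norm of the Calder\'on maximal function, and the dyadic sum. Two of these are essentially immediate. Proposition \ref{Prop.different_lp_norms} gives $\|f|L_p(\mathfrak{m}_0)\| \le \sum_i \|f|L_p(\mathcal{H}_{\theta_i}\lfloor_{S^i})\|$, which is already controlled by the sum of Besov norms; and since $\theta_1 > 0$ in this subsection, Remark \ref{Rem.measure_plus_porosity} forces $\mu(S)=0$, so the Calder\'on maximal-function term vanishes identically. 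Moreover, $\sigma \le \sigma(S)$ combined with the $\sigma(S)$-porosity of $S$ (Remark \ref{Rem.measure_plus_porosity}) yields $S_{\epsilon^k}(\sigma) = S$ for all $k$, while Proposition \ref{Prop.special_regular_measures} gives $\epsilon = 1/2$, so $B_{\epsilon^k}(x) = B_k(x)$ throughout.

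The substantive task is to control the dyadic sum. For each $k \ge 1$ and $x \in S$, I will apply the combinatorial Proposition \ref{Prop.combinatorial_trick} with $\underline{x}=x$ and $c=1$ to extract an index set $\mathcal{I}(x,k) \subset \{1,\dots,N\}$ and an integer $\overline{i}(x,k) \in \{1,\dots,N+1\}$ such that the enlarged ball $B(x,k) := (1+\overline{i}(x,k))B_k(x)$ avoids every $S^j$ with $j \notin \mathcal{I}(x,k)$, while for each $i' \in \mathcal{I}(x,k)$ it contains a ball $B_k(x_{i'}(x))$ with $x_{i'}(x) \in S^{i'}$. Since both $\mathcal{I}$ and $\overline{i}$ take values in finite sets, I can partition $S$ accordingly and, at the cost of a constant depending only on $N$, treat the enlargement factor $c := 1 + \overline{i}$ and the index set $\mathcal{I}$ as fixed. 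Proposition \ref{Prop.oscillation_comparison} then gives $\mathcal{E}_{\mathfrak{m}_k}(f,B_k(x)) \le C\, \mathcal{E}_{\mathfrak{m}_k}(f, B(x,k))$, after which Lemma \ref{Lm.estimate_double_averaging_2} splits the right-hand side into single-piece oscillations $\mathcal{E}_{\mathcal{H}_{\theta_{i'}}\lfloor_{S^{i'}}}(f, B(x,k))$ for $i' \in \mathcal{I}$ and cross-piece double averages over $(B(x,k)\cap S^{i'}) \times (B(x,k)\cap S^{j'})$ for $i' \neq j'$.

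For the single-piece terms, the Ahlfors--David regularity of $S^{i'}$ together with Remark \ref{Rem.Ahlfors_doubling} lets me replace $\mathcal{E}_{\mathcal{H}_{\theta_{i'}}\lfloor_{S^{i'}}}(f, B(x,k))$ by $\mathcal{E}_{\mathcal{H}_{\theta_{i'}}\lfloor_{S^{i'}}}(f, B_{2c\cdot 2^{-k}}(x_{i'}(x)))$ up to a constant. Integrating over $x \in S^i$ against $d\mathcal{H}_{\theta_i}$, a bounded-multiplicity covering argument in combination with Proposition \ref{Prop.comparison_of_measures} converts the $S^i$-integral into an $S^{i'}$-integral, the measure-ratio Jacobian contributing a factor $2^{k(\theta_i-\theta_{i'})}$. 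Combined with the prefactors $2^{k(p-\theta)}$ (from the dyadic sum) and $2^{k(\theta-\theta_i)}$ (from $\mathfrak{m}_k\lfloor_{S^i}$), the exponents telescope to $2^{k(p-\theta_{i'})}$, so summing over $k$ recovers precisely $\|f|\operatorname{B}^{1-\theta_{i'}/p}_p(S^{i'})\|^p$ via Definition \ref{Def.Besov}. For the cross-piece terms, Jensen's inequality pushes the $p$-th power inside the double average; then a parallel Fubini/change-of-variables computation using the ADR identities $\mathcal{H}_{\theta_\ell}(B_k \cap S^\ell) \approx \mu(B_k)/2^{-k\theta_\ell}$ produces the weight $\operatorname{w}_k(y,z)$, while the containment $y,z \in B(x,k)$ confines the pair $(y,z)$ to $\Sigma^{i',j'}_{k-O(1)}$; the exponents assemble to $2^{k(p-\theta_{i'}-\theta_{j'})}$, and, after a harmless index shift justified by \eqref{eqq.weights}, the resulting quantity is bounded by $C(\mathcal{GL}^{(1)}_p(f))^p$.

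The main obstacle I anticipate is the bookkeeping in these two change-of-variables steps: converting an integral over $S^i$ (where the integrand is localized to oscillations on a different piece $S^{i'}$) into the correct intrinsic integral on $S^{i'}$ or on $\Sigma^{i',j'}_{k-O(1)}$, while tracking powers of $2^k$ so that the assembled weights precisely match the Besov scale $2^{k(p-\theta_{i'})}$ and the gluing scale $2^{k(p-\theta_{i'}-\theta_{j'})}$. All other ingredients -- the porosity reduction, Lemma \ref{Lm.estimate_double_averaging_2}, Proposition \ref{Prop.combinatorial_trick}, the ADR comparison, and \eqref{eqq.weights} -- are already in place in the preceding sections.
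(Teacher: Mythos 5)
Your proposal is correct and follows essentially the same strategy as the paper: reduce $\mathcal{BN}_{p,\{\mathfrak{m}_k\},\sigma}$ to the dyadic sum via $\mu(S)=0$ and $S_{\epsilon^k}(\sigma)=S$, enlarge each ball via Proposition~\ref{Prop.combinatorial_trick}, split the oscillation $\mathcal{E}_{\mathfrak{m}_k}$ into single-piece and cross-piece contributions via Lemma~\ref{Lm.estimate_double_averaging_2}, and then convert these through ADR comparison and bounded-overlap coverings to the Besov norms and the gluing functional $\mathcal{GL}^{(1)}_p$ respectively, with exponents telescoping exactly as you compute. The only organizational difference from the paper is that you apply the combinatorial proposition pointwise at each $x\in S$ and partition $S$ by the finitely many outcomes of $(\mathcal{I},\overline{i})$, whereas the paper applies it once per $2^{-k}$-net ball $B_{k,\alpha}$ and sums over $\alpha$ using Proposition~\ref{Prop.overlapping}; both are valid, and you should just make sure to absorb the small-$k$ boundary terms arising from the $k\to k-O(N)$ index shift via Lemma~\ref{Lm.estimate_by_lp_norms}, as the paper does in its Step~5.
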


\begin{proof}
Since the set $S$ is $\sigma$-porous and $\mu(S)=0$, by \eqref{eq.main3} we have
\begin{equation}
\Bigl(\mathcal{BN}_{p,\{\mathfrak{m}_{k}\},\sigma}(f)\Bigr)^{p}
=\sum\limits_{k=1}^{\infty} 2^{k(\theta-p)}\int\limits_{S}\Bigl(\mathcal{E}_{\mathfrak{m}_{k}}(f,B_{k}(x))\Bigr)^{p}\,d\mathfrak{m}_{k}(x).
\end{equation}
It will be convenient to split the rest of the proof into several steps.

\textit{Step 1.} Given $i,j \in \{1,...,N\}$ with $i \neq j$, for each $k \in \mathbb{N}_{0}$, we have $B_{k}(x) \cap S^{j} = \emptyset$ for all $x \in S^{i} \setminus S^{i,j}_{k}$.
Consequently, by \eqref{eqq.concrete_measure}, we get
\begin{equation}
\notag
\int\limits_{S^{i} \setminus S^{i,j}_{k}}\Bigl(\mathcal{E}_{\mathfrak{m}_{k}}(f,B_{k}(x))\Bigr)^{p}\,d\mathfrak{m}_{k}(x)=2^{k(\theta-\theta_{i})}\int\limits_{S^{i} \setminus S^{i,j}_{k}}\Bigl(\mathcal{E}_{\mathcal{H}_{\theta_{i}}\lfloor_{S^{i}}}(f,B_{k}(x))\Bigr)^{p}\,d\mathcal{H}_{\theta_{i}}(x).
\end{equation}
Hence, taking into account \eqref{eqq.true_Besov_norm} we obtain
\begin{equation}
\label{eqq.4.22'}
\sum\limits_{i \neq j}\sum\limits_{k=1}^{\infty}2^{k(p-\theta)}\int\limits_{S^{i} \setminus S^{i,j}_{k}}\Bigl(\mathcal{E}_{\mathfrak{m}_{k}}(f,B_{k}(x))\Bigr)^{p}\,d\mathfrak{m}_{k}(x) \le \sum\limits_{i=1}^{N}\|f|\operatorname{B}_{p}^{1-\frac{\theta_{i}}{p}}(S^{i})\|^{p}.
\end{equation}

\textit{Step 2.} We fix for a moment $i,j \in \{1,...,N\}$ with $i \neq j$.
We recall notation given right after the proof of Proposition \ref{Prop.comparison_of_measures}.
Given $k \in \mathbb{N}_{0}$ and $\alpha \in \mathcal{A}_{k}(S)$ with $B_{k,\alpha} \cap S^{i,j}_{k} \neq \emptyset$,
we apply Proposition \ref{Prop.combinatorial_trick}. This gives an index set $\mathcal{I}_{k,\alpha} \subset \{1,...,N\}$ and a constant $c_{k,\alpha} \in \{1,...,N+1\}$ such that
$c_{k,\alpha}B_{k,\alpha} \cap S^{j} = \emptyset$ for all $j \in \{1,...,N\} \setminus \mathcal{I}_{k,\alpha}$ and for each $i \in \mathcal{I}_{k,\alpha}$ there is $x_{k,\alpha}(i) \in S^{i}$
such that $B_{k}(x_{k,\alpha}(i)) \subset c_{k,\alpha}B_{k,\alpha}$. Furthermore, without loss of generality we may
assume that $B_{k}(x) \subset c_{k,\alpha}B_{k,\alpha}$ for all $x \in S^{i,j}_{k} \cap B_{k,\alpha}$.
As a result, by Proposition \ref{Prop.oscillation_comparison}, \eqref{eqq.averaging_comparison} in combination with Lemma \ref{Lm.estimate_double_averaging_2} and H\"older's inequality we get
\begin{equation}
\label{eqq.4.9}
\begin{split}
&\int\limits_{S^{i,j}_{k} \cap B_{k,\alpha}}\Bigl(\mathcal{E}_{\mathfrak{m}_{k}}(f,B_{k}(x))\Bigr)^{p}\,d\mathfrak{m}_{k}(x) \le C 2^{k\theta}\mu(B_{k,\alpha})
\Bigl(\mathcal{E}_{\mathfrak{m}_{k}}(f,c_{k,\alpha}B_{k,\alpha})\Bigr)^{p}\\
&\le C 2^{k\theta}\mu(B_{k,\alpha})\Bigl[\sum\limits_{i' \in \mathcal{I}_{k,\alpha}}\Bigl(\mathcal{E}_{\mathcal{H}_{\theta_{i'}}\lfloor_{S^{i'}}}(f,c_{k,\alpha}B_{k,\alpha} \cap S^{i'})\Bigr)^{p}\\
&+\sum\limits_{\substack{i',j' \in \mathcal{I}_{k,\alpha} \\ i' \neq j'}}\fint\limits_{c_{k,\alpha}B_{k,\alpha}\cap S^{i'}}\fint\limits_{c_{k,\alpha}B_{k,\alpha} \cap S^{j'}}|f(y')-f(z')|^{p}\,d\mathcal{H}_{\theta_{i'}}(y')d\mathcal{H}_{\theta_{j'}}(z')\Bigr].
\end{split}
\end{equation}

\textit{Step 3.} Clearly, given $i' \in \mathcal{I}_{k,\alpha}$,
$c_{k,\alpha}B_{k,\alpha} \subset B_{k-N-1}(x)$ for all $x \in c_{k,\alpha}B_{k,\alpha} \cap S^{i'}$. Hence, by \eqref{eqq.doubling} and Definition \ref{Def.Ahlfors_David}
(recall that $B_{k}(x) \subset c_{k,\alpha}B_{k,\alpha}$ for all $x \in B_{k,\alpha} \cap S^{i'}$),
\begin{equation}
\begin{split}
\notag
&2^{k\theta}\mu(B_{k,\alpha}) \le 2^{k\theta}\mu(B_{k-N-1}(x)) \le C 2^{k\theta}\mu(B_{k}(x)) \\
&\le C 2^{k(\theta-\theta_{i'})}\mathcal{H}_{\theta_{i'}}(B_{k}(x) \cap S^{i'}) \le C 2^{k(\theta-\theta_{i'})}\mathcal{H}_{\theta_{i'}}(c_{k,\alpha}B_{k,\alpha} \cap S^{i'}).
\end{split}
\end{equation}
Combining the above observations with Proposition \ref{Prop.oscillation_comparison} (we apply this proposition with $\mathfrak{m}_{k}$ replaced by $2^{k(\theta-\theta_{i'})}\mathcal{H}_{\theta_{i'}}\lfloor_{S^{i'}}$), we derive, for each index $i' \in \mathcal{I}_{k,\alpha}$,
\begin{equation}
\label{eqq.4.24'}
\begin{split}
&2^{k\theta}\mu(B_{k,\alpha})\Bigl(\mathcal{E}_{\mathcal{H}_{\theta_{i'}}\lfloor_{S^{i'}}}(f,c_{k,\alpha}B_{k,\alpha} \cap S^{i'})\Bigr)^{p}\\
&\le C 2^{k(\theta-\theta_{i'})}\int\limits_{c_{k,\alpha}B_{k,\alpha} \cap S^{i'}}
\Bigl(\mathcal{E}_{\mathcal{H}_{\theta_{i'}}\lfloor_{S^{i'}}}(f,B_{k-N-1}(x) \cap S^{i'})\Bigr)^{p}\,d\mathcal{H}_{\theta_{i'}}(x).
\end{split}
\end{equation}

By Proposition \ref{Prop.weight_ineq} and \eqref{eqq.weights} we see that, given $i',j' \in \mathcal{I}_{k,\alpha}$, the following inequality
$$
\frac{1}{\mu(B_{k,\alpha})} \le C\operatorname{w}_{k-N-2}(y,z)
$$
holds for any $y \in c_{k,\alpha}B_{k,\alpha} \cap S^{i'}$
and any $z \in c_{k,\alpha}B_{k,\alpha} \cap S^{j'}$. Hence,
\begin{equation}
\label{eqq.4.26'}
\begin{split}
&2^{k\theta}\mu(B_{k,\alpha})\fint\limits_{c_{k,\alpha}B_{k,\alpha}\cap S^{i'}}\fint\limits_{c_{k,\alpha}B_{k,\alpha} \cap S^{j'}}|f(y)-f(z)|^{p}\,d\mathcal{H}_{\theta_{i'}}(y)d\mathcal{H}_{\theta_{j'}}(z)\\
&\le C 2^{k(\theta-\theta_{i'}-\theta_{j'})}\int\limits_{c_{k,\alpha}B_{k,\alpha}\cap S^{i'}}
\int\limits_{c_{k,\alpha}B_{k,\alpha} \cap S^{j'}}\operatorname{w}_{k-N-2}(y,z)|f(y)-f(z)|^{p}\,d\mathcal{H}_{\theta_{i'}}(y)d\mathcal{H}_{\theta_{j'}}(z).
\end{split}
\end{equation}

\textit{Step 4.}
If $k \geq N+3$,  $\alpha \in \mathcal{A}_{k}(S)$ and $i',j' \in \mathcal{I}_{k,\alpha}$ are such that $B_{k,\alpha} \cap S^{i,j}_{k} \neq \emptyset$, then by \eqref{eqq.sigma_set} we clearly have
\begin{equation}
\notag
c_{k,\alpha}B_{k,\alpha}\cap S^{i'} \times c_{k,\alpha}B_{k,\alpha}\cap S^{j'} \subset \Sigma^{i',j'}_{k-N-2}.
\end{equation}
Keeping in mind this observation, we combine \eqref{eqq.4.9}, \eqref{eqq.4.24'}, \eqref{eqq.4.26'} and take into account
Proposition \ref{Prop.overlapping}. For each $k \in \mathbb{N}_{0}$, $k \geq N+3$ we have
\begin{equation}
\notag
\begin{split}
&\int\limits_{S^{i,j}_{k}}\Bigl(\mathcal{E}_{\mathfrak{m}_{k}}(f,B_{k}(x))\Bigr)^{p}\,d\mathfrak{m}_{k}(x)=\sum\limits_{\alpha \in \mathcal{A}_{k}(S)}
\int\limits_{S^{i,j}_{k} \cap B_{k,\alpha}}\Bigl(\mathcal{E}_{\mathfrak{m}_{k}}(f,B_{k}(x))\Bigr)^{p}\,d\mathfrak{m}_{k}(x)\\
&\le  C\sum\limits_{i=1}^{N}2^{k(\theta-\theta_{i})}\int\limits_{S^{i}}
\Bigl(\mathcal{E}_{\mathcal{H}_{\theta_{i}}\lfloor_{S^{i}}}(f,B_{k-N-1}(y))\Bigr)^{p}\,d\mathcal{H}_{\theta_{i}}(y)\\
&+C\sum\limits_{i \neq j}2^{k(\theta-\theta_{i}-\theta_{j})}\iint\limits_{\Sigma^{i,j}_{k-N-2}}\operatorname{w}_{k-N-2}(y,z)|f(y)-f(z)|^{p}\,d\mathcal{H}_{\theta_{i}}(y)d\mathcal{H}_{\theta_{j}}(z).
\end{split}
\end{equation}
As a result, using \eqref{eqq.true_Besov_norm} and \eqref{eqq.gluing_functionals}, we obtain
\begin{equation}
\label{eqq.4.27'}
\begin{split}
&\sum\limits_{i \neq j}\sum\limits_{k=N+3}^{\infty}2^{k(p-\theta)}\int\limits_{S^{i,j}_{k}}\Bigl(\mathcal{E}_{\mathfrak{m}_{k}}(f,B_{k}(x))\Bigr)^{p}\,d\mathfrak{m}_{k}(x) \le
C\Bigl(\sum\limits_{i=1}^{N}\|f|\operatorname{B}_{p}^{1-\frac{\theta_{i}}{p}}(S^{i})\|^{p}+(\mathcal{GL}^{(1)}_{p}(f))^{p}\Bigr).
\end{split}
\end{equation}

\textit{Step 5.} It remains to note that by Proposition \ref{Prop.different_lp_norms} and Lemma \ref{Lm.estimate_by_lp_norms}
\begin{equation}
\label{eqq.4.28'}
\sum\limits_{k=1}^{N+2} 2^{k(\theta-p)}\int\limits_{S}\Bigl(\mathcal{E}_{\mathfrak{m}_{k}}(f,B_{k}(x))\Bigr)^{p}\,d\mathfrak{m}_{k}(x)
\le C \sum\limits_{i=1}^{N}\|f|L_{p}(\mathcal{H}_{\theta_{i}}\lfloor_{S^{i}})\|^{p}.
\end{equation}

\textit{Step 6.} Combining \eqref{eqq.4.22'}, \eqref{eqq.4.27'} and \eqref{eqq.4.28'}, we arrive at \eqref{eqq.4.20''}, completing the proof.
\end{proof}

\begin{Th}
\label{Th.2}
For each $\sigma \in (0,\sigma(S)]$, there is a constant $C > 0$ such that
\begin{equation}
\label{eqq.4.20'}
\Bigl(\sum\limits_{i=1}^{N}\|f|\operatorname{B}_{p}^{1-\frac{\theta_{i}}{p}}(S^{i})\|+\mathcal{GL}^{(3)}_{p}(f)\Bigr)  \le C \operatorname{BN}_{p,\{\mathfrak{m}_{k}\},\sigma}(f) \quad \text{for all} \quad
f \in L_{p}(\mathfrak{m}_{0}).
\end{equation}
\end{Th}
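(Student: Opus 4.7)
The plan is to split the left-hand side of \eqref{eqq.4.20'} into the Besov sum and the gluing functional $\mathcal{GL}^{(3)}_{p}(f)$, and bound each separately by $\operatorname{BN}_{p,\{\mathfrak{m}_{k}\},\sigma}(f)$. Since we are in the simple case, $\theta_{i} > 0$ for every $i \in \{1,\dots,N\}$, so Proposition \ref{Prop.estimate_simplebesov_by_modifiedbesov} applies to each piece $S^{i}$ and summing over $i$ yields immediately
\begin{equation*}
\sum_{i=1}^{N}\|f|\operatorname{B}_{p}^{1-\frac{\theta_{i}}{p}}(S^{i})\| \le C N \operatorname{BN}_{p,\{\mathfrak{m}_{k}\},\sigma}(f).
\end{equation*}

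For the gluing term I would replace the double averaging $A^{i,j}_{k}(f)(y,z)$ by a single oscillation on a ball centered at $y$. If $(y,z) \in \Sigma^{i,j}_{k}$, then $B_{k}(z) \subset 2B_{k}(y)$, so both $B_{k}(y)$ and $B_{k}(z)$ lie inside $2B_{k}(y)$. Lemma \ref{Lm.estimate_double_averaging_1} (applied with $c=2$, $\underline{x}=y$) together with Proposition \ref{Prop.oscillation_comparison} (to shrink $2B_{k}(y)$ back to $B_{k}(y)$, using $y \in S$) gives
\begin{equation*}
A^{i,j}_{k}(f)(y,z) \le C \mathcal{E}_{\mathfrak{m}_{k}}(f,B_{k}(y)).
\end{equation*}
Plugging this into the definition \eqref{eqq.gluing_functionals} of $\mathcal{GL}^{(3)}_{p}(f)$, the integrand no longer depends on $z$, and the $z$-integral reduces to $\int_{B_{k}(y) \cap S^{j}} \operatorname{w}_{k}(y,z)\,d\mathcal{H}_{\theta_{j}}(z) \le C 2^{k\theta_{j}}$ by Proposition \ref{Prop.integral_of_weights}. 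Converting the remaining integral using $\mathfrak{m}_{k}\lfloor_{S^{i}} = 2^{k(\theta-\theta_{i})}\mathcal{H}_{\theta_{i}}\lfloor_{S^{i}}$ from \eqref{eqq.concrete_measure}, the overall scaling factor collapses as
\begin{equation*}
2^{k(p-\theta_{i}-\theta_{j})} \cdot 2^{k\theta_{j}} \cdot 2^{-k(\theta-\theta_{i})} = 2^{k(p-\theta)},
\end{equation*}
so that each summand is controlled by $C 2^{k(p-\theta)} \int_{S^{i}} (\mathcal{E}_{\mathfrak{m}_{k}}(f,B_{k}(y)))^{p}\,d\mathfrak{m}_{k}(y)$.

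Summing over $i \neq j$ and $k \geq 1$, and invoking Remark \ref{Rem.measure_plus_porosity} to ensure that $S$ is $\sigma$-porous (so $S_{\epsilon^{k}}(\sigma) = S$ for $\epsilon = 1/2$, and also $\mu(S) = 0$ which kills the $f^{\sharp}_{\{\mathfrak{m}_{k}\}}$ term in $\mathcal{BN}_{p,\{\mathfrak{m}_{k}\},\sigma}(f)$), one recovers exactly the second summand of $\mathcal{BN}_{p,\{\mathfrak{m}_{k}\},\sigma}(f)$, and hence $\mathcal{GL}^{(3)}_{p}(f) \le C \operatorname{BN}_{p,\{\mathfrak{m}_{k}\},\sigma}(f)$. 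The main obstacle is not any single step but the careful bookkeeping: the three scaling factors (from the weight integration, from the measure conversion, and from the definition of $\mathcal{GL}^{(3)}_{p}$) must combine cleanly into the Besov-type weight $2^{k(p-\theta)}$, and this is exactly what happens thanks to the \emph{symmetric} choice of weight $\operatorname{w}_{k}$ in \eqref{eqq.weight}. The hypothesis $\theta_{1} > 0$ is essential throughout: it yields porosity of $S$ (so that all of $S$ appears in the $\mathcal{BN}$ sum), triviality of $\mu(S)$, and the applicability of Proposition \ref{Prop.estimate_simplebesov_by_modifiedbesov} to every piece.
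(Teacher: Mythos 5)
Your overall strategy is the same as the paper's: bound the Besov sum by Proposition~\ref{Prop.estimate_simplebesov_by_modifiedbesov} (this part is correct), and bound $\mathcal{GL}^{(3)}_{p}$ by applying Lemma~\ref{Lm.estimate_double_averaging_1} with the ball $2B_{k}(y)$, integrating out the $z$-variable via Proposition~\ref{Prop.integral_of_weights}, and converting $\mathcal{H}_{\theta_{i}}$ to $\mathfrak{m}_{k}$ so that the three scaling factors collapse to $2^{k(p-\theta)}$. The scaling bookkeeping is right, and the observations that $\mu(S)=0$ kills the sharp maximal term and that porosity makes $S_{\epsilon^k}(\sigma)=S$ are correct.

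However, there is a genuine gap in the middle step: you invoke Proposition~\ref{Prop.oscillation_comparison} ``to shrink $2B_{k}(y)$ back to $B_{k}(y)$,'' i.e.\ to get $\mathcal{E}_{\mathfrak{m}_{k}}(f,2B_{k}(y)) \le C\,\mathcal{E}_{\mathfrak{m}_{k}}(f,B_{k}(y))$. Proposition~\ref{Prop.oscillation_comparison} goes in the \emph{opposite} direction: it bounds the averaged oscillation on the \emph{smaller} ball (centered on $S$) by the averaged oscillation on the \emph{larger} one, $\mathcal{E}_{\mathfrak{m}_{k}}(f,B_{c_1\epsilon^k}(x)) \le C\,\mathcal{E}_{\mathfrak{m}_{k}}(f,B_{c_2\epsilon^k}(y))$ under $B_{c_1\epsilon^k}(x)\subset B_{c_2\epsilon^k}(y)$. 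The reverse inequality is simply false in general (take $f$ constant on $B_{k}(y)$ but oscillating on $2B_{k}(y)\setminus B_{k}(y)$ with $\mathfrak{m}_{k}(2B_{k}(y)\setminus B_{k}(y))>0$); comparability of the two $\mathfrak{m}_{k}$-masses, which is what the proposition actually uses, does not give you control of a big-ball oscillation by a small-ball one. So the claimed inequality $A^{i,j}_{k}(f)(y,z) \le C\,\mathcal{E}_{\mathfrak{m}_{k}}(f,B_{k}(y))$ does not follow.

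The fix is to \emph{not} shrink. Keep $B_{k-1}(y)=2B_{k}(y)$ in the estimate, so you land on
$2^{k(p-\theta)}\int_{S^{i}}\bigl(\mathcal{E}_{\mathfrak{m}_{k}}(f,B_{k-1}(y))\bigr)^{p}\,d\mathfrak{m}_{k}(y)$. Then use condition (\textbf{M}4) of Definition~\ref{Def.regular_sequence}: since $w_{k-1}/w_{k}$ is bounded above and below by absolute constants, the measures $\mathfrak{m}_{k}$ and $\mathfrak{m}_{k-1}$ are comparable, hence $\mathcal{E}_{\mathfrak{m}_{k}}(f,B_{k-1}(y))\approx \mathcal{E}_{\mathfrak{m}_{k-1}}(f,B_{k-1}(y))$ and $d\mathfrak{m}_{k}\approx d\mathfrak{m}_{k-1}$ on $S$. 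Re-indexing $k\mapsto k-1$ turns the whole sum into the second summand of $\mathcal{BN}_{p,\{\mathfrak{m}_{k}\},\sigma}(f)$ plus an extra $k=0$ term, and the latter is absorbed by $\|f|L_{p}(\mathfrak{m}_{0})\|$ via Lemma~\ref{Lm.estimate_by_lp_norms} with $L=0$. This is precisely what the paper's argument does; with this one correction your proof coincides with it.
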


\begin{proof}
Given $i,j \in \{1,...,N\}$, for each $k \in \mathbb{N}_{0}$,
we combine Lemma \ref{Lm.estimate_double_averaging_1} (applied with $c=2$ and $B=B_{k-1}(y)$) with Proposition \ref{Prop.integral_of_weights}. This yields
\begin{equation}
\notag
\begin{split}
&\iint\limits_{\Sigma^{i,j}_{k}}\operatorname{w}_{k}(y,z)(A^{i,j}_{k}(f)(y,z))^{p}\,d\mathcal{H}_{\theta_{i}}(y)d\mathcal{H}_{\theta_{j}}(z) \le C 2^{k\theta_{j}}
\int\limits_{S^{i,j}_{k}}\Bigl(\mathcal{E}_{\mathfrak{m}_{k}}(f,B_{k-1}(y))\Bigr)^{p}\,d\mathcal{H}_{\theta_{i}}(y)\\
&\le C 2^{k(\theta_{i}+\theta_{j}-\theta)}\int\limits_{S^{i}}\Bigl(\mathcal{E}_{\mathfrak{m}_{k}}(f,B_{k-1}(y))\Bigr)^{p}\,d\mathfrak{m}_{k}(y).
\end{split}
\end{equation}
Hence, using (\textbf{M}4) in Definition \ref{Def.regular_sequence} and Lemma \ref{Lm.estimate_by_lp_norms} (with $L=0$), we obtain
\begin{equation}
\begin{split}
&\Bigl(\mathcal{GL}^{(3)}_{p}(f)\Bigr)^{p} \le C \sum\limits_{k=1}^{\infty}2^{k(p-\theta)}\int\limits_{S}\Bigl(\mathcal{E}_{\mathfrak{m}_{k-1}}(f,B_{k-1}(y))\Bigr)^{p}\,d\mathfrak{m}_{k-1}(y)\\
&\le
C \Bigl(\operatorname{BN}_{p,\{\mathfrak{m}_{k}\},\sigma}(f)\Bigr)^{p}.
\end{split}
\end{equation}
Finally, combining the above inequality with Proposition \ref{Prop.estimate_simplebesov_by_modifiedbesov} we complete the proof.
\end{proof}

Combining Theorems \ref{Th.main}, \ref{Th.1}, \ref{Th.2} with Lemmas \ref{Lm.1}, \ref{Lm.2} and Remark \ref{Rem.gluing_ineq_1} we immediately obtain
\textit{the main result} of this subsection.

\begin{Ca}
\label{Ca.1}
A function $f \in \cap_{i=1}^{N}L_{p}(\mathcal{H}_{\theta_{i}}\lfloor_{S^{i}})$ belongs to the space $W_{p}^{1}|_{S}^{\mathfrak{m}_{0}}$ if and only if $f \in \cap_{i=1}^{N}\operatorname{B}^{1-\frac{\theta_{i}}{p}}_{p}(\mathcal{H}_{\theta_{i}}\lfloor_{S^{i}})$ and $\mathcal{GL}^{(l)}_{p}(f) < +\infty$ for some $l \in \{1,2,3\}$.
Furthermore,
\begin{equation}
\|f|W_{p}^{1}(\operatorname{X})|_{S}^{\mathfrak{m}_{0}}\| \approx \sum\limits_{i=1}^{N}\|f|\operatorname{B}_{p}^{1-\frac{\theta_{i}}{p}}(S^{i})\|+\mathcal{GL}^{(l)}_{p}(f), \quad l \in \{1,2,3\},
\end{equation}
where the equivalence constants do not depend on $f$.

Finally, there exists an $\mathfrak{m}_{0}$-extension operator $\operatorname{Ext}_{S,\{\mathfrak{m}_{k}\}} \in \mathcal{L}(W_{p}^{1}(\operatorname{X})|_{S}^{\mathfrak{m}_{0}},W_{p}^{1}(\operatorname{X}))$.
\end{Ca}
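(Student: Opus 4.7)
The plan is to apply Theorem \ref{Th.main} to the specific sequence $\{\mathfrak{m}_k\}$ defined in \eqref{eqq.concrete_measure} and then identify the abstract criterion $\operatorname{BN}_{p,\{\mathfrak{m}_k\},\sigma}(f) < +\infty$ with the explicit quantity $\sum_{i=1}^{N} \|f|\operatorname{B}_{p}^{1-\frac{\theta_{i}}{p}}(S^{i})\| + \mathcal{GL}_p^{(l)}(f)$, via the two-sided bounds already proved in this subsection. First I verify that $\{\mathfrak{m}_k\} \in \mathfrak{M}^{str}_{\theta}(S)$ with $\epsilon = 1/2$: conditions (\textbf{M}1)--(\textbf{M}4) follow directly from the codimensional Ahlfors--David regularity of each $S^i$ and the explicit weights $2^{k(\theta-\theta_i)}$, the crucial arithmetic point being $\theta-\theta_i \geq 0$ for every $i$, while (\textbf{M}5) is deduced from Proposition \ref{Prop.special_regular_measures} applied to each summand. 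Since $S$ is $\sigma(S)$-porous by Remark \ref{Rem.measure_plus_porosity}, fixing $\sigma \in (0,\sigma(S)]$ small enough to also satisfy the compatibility constraints of Theorem \ref{Th.main} yields $\|f|W_p^{1}(\operatorname{X})|_S^{\mathfrak{m}_0}\| \approx \operatorname{BN}_{p,\{\mathfrak{m}_k\},\sigma}(f)$.

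The heart of the proof is the equivalence, for each $l \in \{1,2,3\}$,
\[
\operatorname{BN}_{p,\{\mathfrak{m}_k\},\sigma}(f) \approx \sum_{i=1}^{N} \|f|\operatorname{B}_{p}^{1-\frac{\theta_{i}}{p}}(S^{i})\| + \mathcal{GL}_p^{(l)}(f).
\]
Theorem \ref{Th.1} supplies the upper bound for $\operatorname{BN}_{p,\{\mathfrak{m}_k\},\sigma}(f)$ in terms of the Besov norms and $\mathcal{GL}_p^{(1)}(f)$, while Theorem \ref{Th.2} supplies the matching lower bound in terms of the Besov norms and $\mathcal{GL}_p^{(3)}(f)$. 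To interchange the three gluing functionals, I chain Lemma \ref{Lm.1} (controlling $\mathcal{GL}_p^{(3)}$ by $\mathcal{GL}_p^{(1)}$ plus $L_p$-norms, which in turn are trivially dominated by the Besov norms), Lemma \ref{Lm.2} (controlling $\mathcal{GL}_p^{(1)}$ by $\mathcal{GL}_p^{(2)}$ plus Besov norms), and Remark \ref{Rem.gluing_ineq_1} (giving $\mathcal{GL}_p^{(2)} \leq \mathcal{GL}_p^{(3)}$). These yield $\mathcal{GL}_p^{(1)} \approx \mathcal{GL}_p^{(2)} \approx \mathcal{GL}_p^{(3)}$ modulo $\sum_i \|f|\operatorname{B}_{p}^{1-\frac{\theta_{i}}{p}}(S^{i})\|$, so each choice of $l \in \{1,2,3\}$ produces the same equivalent trace norm.

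The existence of the bounded extension operator $\operatorname{Ext}_{S,\{\mathfrak{m}_k\}}$ is inherited from the general trace theorem in \cite{T5} (Theorem~1.4 there, from which Theorem \ref{Th.main} is excerpted), which already constructs an explicit bounded right inverse of the trace operator whose norm is controlled by the equivalent quantity in \eqref{eq.717}. The main technical point worth flagging is the clean verification of condition (\textbf{M}5) for the summed measure: because the pieces $S^i$ may mutually intersect, the Radon--Nikodym decomposition of $\mathfrak{m}_0$ with respect to the individual $\mathcal{H}_{\theta_i}\lfloor_{S^i}$ is nontrivial, and one must identify, for $\mathfrak{m}_0$-almost every $\underline{x}$, the index $i$ at which $\underline{x}$ is a Lebesgue density point of $E \cap S^i$ in the $\mathcal{H}_{\theta_i}$-sense, and then propagate that density lower bound through the weighted sum using the single-piece density theorem underlying Proposition \ref{Prop.special_regular_measures}.
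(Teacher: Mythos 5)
Your proposal is correct and follows essentially the same route as the paper: the paper's proof is a one-line combination of Theorems~\ref{Th.main}, \ref{Th.1}, \ref{Th.2}, Lemmas~\ref{Lm.1}, \ref{Lm.2}, and Remark~\ref{Rem.gluing_ineq_1}, which is exactly the chain you set up. Your explicit verification that $\{\mathfrak{m}_k\}$ lies in $\mathfrak{M}^{str}_{\theta}(S)$ (in particular, checking (\textbf{M}5) via density points of $E\cap S^{i_0}$ and the two-sided measure comparison of Proposition~\ref{Prop.comparison_of_measures}) is a step the paper leaves implicit, and your flagging of it is a genuine, valid addition rather than a deviation.
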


\subsection{Difficult case} In this subsection, we consider a more complicated case when $\theta_{1}=0$.
For the technical simplicity we assume that $N=2$, and, hence, $\theta_{2} > 0$. In contrast with the previous subsection the resulting
trace space will be a mixture of \textit{``spaces of different nature''.} Roughly speaking, 
the trace norm will be composed of the Sobolev-type seminorm, the Besov-type norm and
the corresponding gluing functional.
Apart from the ideological difference with the previous subsection, in this case we should overcome a \textit{technical difficulty}.
More precisely, since $\theta_{1}=0$, the set $S = S^{1} \cup S^{2}$ is not necessary porous.

Under the above assumptions, we clearly have
\begin{equation}
\label{eqq.special_sequence_2}
\mathfrak{m}_{k}=2^{k\theta}\mu\lfloor_{S^{1}}+2^{k(\theta-\theta_{2})}\mathcal{H}_{\theta_{2}}\lfloor_{S^{2}}, \quad k \in \mathbb{N}_{0}.
\end{equation}
Keeping in mind Definition \ref{Def.Ahlfors_David} and \eqref{eqq.tricky_average}, we put
\begin{equation}
\label{eqq.simplified_sharp}
f^{\sharp}_{\mu\lfloor_{S^{1}}}(x):=\sup\limits_{r \in (0,2]}\mathcal{E}_{\mu\lfloor_{S^{1}}}(f,B_{r}(x)), \quad x \in S.
\end{equation}

We recall the following lemma from \cite{T5} (we use notation $k(B):=k(r_{B})$).
\begin{Lm}
\label{Lm.largescale_estimate}
Let $\delta \in (0,1]$ and $c \geq 1$. Then there is a constant $C > 0$ depending on $\delta$, such that if $\mathcal{B}_{\delta}$ is an arbitrary $(S,c)$-nice family of balls such that $r(B) \geq \delta$
for all $B \in \mathcal{B}_{\delta}$, then, for each $f \in L_{p}(\mathfrak{m}_{0})$,
\begin{equation}
\sum\limits_{B \in \mathcal{B}_{\delta}} \frac{\mu(B)}{(r_{B})^{p}}\Bigl(\mathcal{E}_{\mathfrak{m}_{k(B)}}(f,2cB)\Bigr)^{p}
\le C \int\limits_{S}|f(x)|^{p}\,d\mathfrak{m}_{0}(x).
\end{equation}
\end{Lm}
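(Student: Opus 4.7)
The governing observation is that the hypothesis $r_{B}\geq\delta$ for every $B\in\mathcal{B}_{\delta}$ forces all scales $k(B):=k(r_{B})$ to lie in the finite range $\{0,1,\dots,K_{\delta}\}$ with $K_{\delta}:=\lceil\log_{2}(2/\delta)\rceil$. Consequently, condition $(\mathbf{M}4)$ of Definition \ref{Def.regular_sequence} applied with $k=0$ and $j=k(B)\leq K_{\delta}$ gives densities $w_{k(B)}$ satisfying $1/C_{3}\leq w_{k(B)}\leq C_{3}\epsilon^{-\theta K_{\delta}}=:C(\delta)$ almost everywhere with respect to $\mathfrak{m}_{0}$. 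In particular, $\mathfrak{m}_{k(B)}$ and $\mathfrak{m}_{0}$ are equivalent on $S$ up to a multiplicative constant $C(\delta)$ uniformly in $B$.

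Using this equivalence together with \eqref{eqq.different_overagings} (choosing the trivial constant $c=0$) and H\"older's inequality, I estimate each summand by
\[
\Bigl(\mathcal{E}_{\mathfrak{m}_{k(B)}}(f,2cB)\Bigr)^{p}\leq \fint_{2cB}|f|^{p}\,d\mathfrak{m}_{k(B)}\leq \frac{C(\delta)}{\mathfrak{m}_{0}(2cB)}\int_{2cB}|f|^{p}\,d\mathfrak{m}_{0}.
\]
The next step is to absorb the prefactor $\mu(B)/r_{B}^{p}$ into $\mathfrak{m}_{0}(2cB)$. For this I pick a point $x_{B}\in cB\cap S$ (available by the $(S,c)$-niceness) and note the inclusions $B_{r_{B}}(x_{B})\subset 2cB\subset B_{4cr_{B}}(x_{B})$. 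Applying $(\mathbf{M}3)$ at level $k=K_{\delta}$ (whose range $[\epsilon^{K_{\delta}},1]$ contains $r_{B}$ because $\epsilon^{K_{\delta}}\leq\delta\leq r_{B}$) and using the equivalence $\mathfrak{m}_{K_{\delta}}\approx\mathfrak{m}_{0}$ established above, together with the doubling of $\mu$, I obtain
\[
\mathfrak{m}_{0}(2cB)\geq \mathfrak{m}_{0}(B_{r_{B}}(x_{B}))\geq c(\delta)\,\mathfrak{m}_{K_{\delta}}(B_{r_{B}}(x_{B}))\geq c(\delta,c)\,\frac{\mu(B)}{r_{B}^{\theta}}.
\]
Since $r_{B}\in[\delta,1]$ and $\theta<p$, this yields $\mu(B)/(r_{B}^{p}\mathfrak{m}_{0}(2cB))\leq C(\delta,c)$.

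Finally, I need a bounded overlap statement for the enlarged family $\{2cB:B\in\mathcal{B}_{\delta}\}$. If $y\in 2cB$ then the center of $B$ lies in $B_{2c}(y)$, and the $B$'s are pairwise disjoint of radii in $[\delta,1]$; the doubling property of $\mu$ (Proposition \ref{Prop.volume_decay}) then bounds the cardinality of such a subfamily by a constant $N(\delta,c)$. Combining everything gives
\[
\sum_{B\in\mathcal{B}_{\delta}}\frac{\mu(B)}{r_{B}^{p}}\Bigl(\mathcal{E}_{\mathfrak{m}_{k(B)}}(f,2cB)\Bigr)^{p}\leq C(\delta,c)\sum_{B\in\mathcal{B}_{\delta}}\int_{2cB}|f|^{p}\,d\mathfrak{m}_{0}\leq C(\delta,c)\,N(\delta,c)\int_{S}|f|^{p}\,d\mathfrak{m}_{0},
\]
which is the claimed inequality.

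The main obstacle to watch carefully is the switch from the Ahlfors-type bound on $\mathfrak{m}_{K_{\delta}}$ (valid only in the scale window $[\epsilon^{K_{\delta}},1]$ supplied by $(\mathbf{M}3)$) to the corresponding bound for $\mathfrak{m}_{0}$; all constants inevitably depend on $\delta$ because the window shrinks as $\delta\to 0$. Everything else is bookkeeping based on doubling, H\"older's inequality and the disjointness of $\mathcal{B}_{\delta}$.
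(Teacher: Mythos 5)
The paper itself does not prove Lemma~\ref{Lm.largescale_estimate}: it is merely recalled from~\cite{T5}, so there is no in-paper argument to compare yours against. That said, your proposal is a correct, self-contained proof. The chain of reasoning is sound: since $r_B\in[\delta,1]$, only finitely many scales $k(B)\in\{0,\dots,K_\delta\}$ occur, so $(\textbf{M}4)$ gives uniform two-sided comparability of $\mathfrak{m}_{k(B)}$ with $\mathfrak{m}_0$ (with a $\delta$-dependent constant); the trivial test constant $c=0$ in $\mathcal{E}_{\mathfrak{m}_{k(B)}}$ together with H\"older reduces each summand to a local $L^{p}(\mathfrak{m}_0)$-average on $2cB$; picking $x_B\in cB\cap S$ (guaranteed by $(\textbf{F}3)$), applying $(\textbf{M}3)$ at a single scale covering the whole window $[\delta,1]$, passing back to $\mathfrak{m}_0$ via $(\textbf{M}4)$, and using doubling and $\theta<p$ gives $\mu(B)/(r_B^{p}\mathfrak{m}_0(2cB))\le C(\delta,c)$; and finally the pairwise disjointness in $(\textbf{F}1)$ plus doubling yields a $\delta,c$-dependent bound on the overlap of $\{2cB\}$, which closes the estimate.

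Two small points worth tidying if you write this out in full. First, $K_\delta:=\lceil\log_2(2/\delta)\rceil$ guarantees $\epsilon^{K_\delta}\le\delta$ only because $\epsilon=1/2$ for the concrete sequence \eqref{eqq.concrete_measure} (Proposition~\ref{Prop.special_regular_measures}); for a general $\theta$-regular sequence you should simply choose $K_\delta$ so that $\epsilon^{K_\delta}\le\delta$, which is always possible and changes nothing. Second, the bounded-overlap step relies on the doubling condition \eqref{eqq.doubling} (or on the standard disjoint-ball counting argument it implies), not really on Proposition~\ref{Prop.volume_decay}; also note Proposition~\ref{Prop.overlapping} does not directly apply since your balls are disjoint but not of a single scale nor centered at a $2^{-k}$-net, so you do need the elementary counting argument you sketched. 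With those cosmetic fixes the proof is complete.
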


We also recall a combinatorial result, which is a slight modification of Theorem 2.6 in \cite{Shv1}.
\begin{Prop}
\label{Prop.10.1}
Let $c \geq 1$ and let $\mathcal{B}$ be an $(S,c)$-Whitney family of balls.
Then there exist constants $C > 0$, $\tau \in (0,1)$, and a family $\mathcal{U}:=\{U(B):B \in \mathcal{B}\}$ of Borel subsets of $S$ such that
$U(B) \subset 2cB$, $\mu(U(B)) \geq \tau \mu(B)$ for all $B \in \mathcal{B}$, and the covering multiplicity of the family $\{U(B):B \in \mathcal{B}\}$ is bounded above by $C$.
\end{Prop}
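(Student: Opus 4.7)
The plan is to assign to each $B \in \mathcal{B}$ a ``shadow'' $U(B) \subset S^{1}$ of $\mu$-mass comparable to $\mu(B)$, and to show that the resulting family has bounded covering multiplicity. The codimension-$0$ hypothesis $\theta_{1} = 0$ is essential here: only because $S^{1} \in \mathcal{ADR}_{0}(\operatorname{X})$ carries positive $\mu$-mass locally can one hope for a lower bound of the form $\mu(U(B)) \gtrsim \mu(B)$ at all.

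For each $B = B_{r_{B}}(x_{B}) \in \mathcal{B}$, I use condition (\textbf{F}3) together with the density of $S^{1}$ to pick a reference point $y_{B} \in cB \cap S^{1}$ (for the generic Whitney ball; balls whose $cB$-dilation fails to meet $S^{1}$ are handled separately, since in the main application only Whitney balls seeing $S^{1}$ need to carry a nontrivial shadow), and define
\[
U(B) := B_{r_{B}/\lambda}(y_{B}) \cap S^{1}
\]
for a fixed constant $\lambda = \lambda(c) \geq 1$. The inclusion $U(B) \subset 2cB$ is immediate from the triangle inequality: $B_{r_{B}/\lambda}(y_{B}) \subset B_{(c + 1/\lambda)r_{B}}(x_{B}) \subset 2cB$ since $c \geq 1$. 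The mass bound $\mu(U(B)) \geq \tau \mu(B)$ follows from two ingredients. First, codimension-$0$ $\mathcal{ADR}$-regularity of $S^{1}$ at $y_{B}$, together with the comparison of $\mathcal{H}_{0}\lfloor_{S^{1}}$ and $\mu\lfloor_{S^{1}}$ available in codimension $0$, gives $\mu(B_{r_{B}/\lambda}(y_{B}) \cap S^{1})$ comparable to $\mu(B_{r_{B}/\lambda}(y_{B}))$ up to the constants $\varkappa_{0,1}(S^{1}), \varkappa_{0,2}(S^{1})$. Second, the uniformly local doubling of $\mu$ and $d(y_{B}, x_{B}) \leq c r_{B}$ give $\mu(B_{r_{B}/\lambda}(y_{B}))$ comparable to $\mu(B)$.

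The bounded covering multiplicity is the main obstacle. Fix $z \in \operatorname{X}$ and let $\mathcal{B}(z) := \{B \in \mathcal{B} : z \in U(B)\}$. For any $B \in \mathcal{B}(z)$ one has $z \in S^{1}$, $d(z, y_{B}) \leq r_{B}/\lambda$, $d(y_{B}, x_{B}) \leq c r_{B}$; combined with (\textbf{F}4) and $z \in S$, this yields
\[
r_{B} \;<\; d(z, x_{B}) \;\leq\; (c + 1/\lambda) r_{B},
\]
so the radius of $B$ is comparable to the distance from $z$ to its center, and the collection $\mathcal{B}(z)$ forms a Whitney-like configuration around $z$. At each dyadic scale of $r_{B}$ the disjointness of $\mathcal{B}$ and the doubling of $\mu$ limit, via a standard packing estimate, the number of $B$'s at that scale to a constant depending only on $c$ and the doubling constant of $\mu$. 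The cross-scale combination is the delicate step, and it is here that the codimension-$0$ density of $S^{1}$ at $z$ enters essentially: every $B \in \mathcal{B}(z)$ must fit inside a ``hole'' $B_{R}(z) \setminus S^{1}$, whose $\mu$-measure is at most $(1 - \varkappa_{0,1}(S^{1}))\mu(B_{R}(z))$ by the $\mathcal{ADR}_{0}$-property. Combining this constraint with the reverse volume decay from Proposition \ref{Prop.volume_decay} applied at $R$ comparable to $\max\{r_{B}: B \in \mathcal{B}(z)\}$ controls $\sum_{B \in \mathcal{B}(z)} \mu(B)$ and hence $|\mathcal{B}(z)|$ uniformly in $z$ and $\mathcal{B}$. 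The combinatorial bookkeeping that finalizes this step follows the scheme of Theorem 2.6 in \cite{Shv1}.
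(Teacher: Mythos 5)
The paper offers no proof here: Proposition \ref{Prop.10.1} is stated with a citation to Theorem~2.6 of \cite{Shv1} (``a slight modification''), so what is being graded is whether your argument could in fact supply a proof. It cannot, for two separate reasons.

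\textbf{The construction itself fails.} You take an arbitrary $y_B\in cB\cap S^1$ and set $U(B)=B_{r_B/\lambda}(y_B)\cap S^1$. Consider $\operatorname{X}=\mathbb{R}^n$ with Lebesgue measure, $S^1=\{x_n\le 0\}$, $z=0$, and the disjoint family $B_j:=B_{2^{-j}/4}\bigl((0,\dots,0,2^{-j})\bigr)$, $j=1,\dots,M$. Each $B_j$ avoids $S^1$, is disjoint from the others, and for $c=4$ one has $cB_j\cap S^1=\{0\}$, so $\mathcal{B}=\{B_j\}$ is an $(S^1,c)$-Whitney family and the \emph{only} admissible choice is $y_{B_j}=0$. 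Then $0\in U(B_j)$ for every $j$, so the covering multiplicity of your family $\{U(B_j)\}$ is $M$, which is unbounded. No choice of $\lambda$ repairs this: the issue is that $cB\cap S^1$ may degenerate to a single point, while the genuine freedom you have is to place $U(B)$ anywhere inside $2cB\cap S^1$ (which in this example is a half-ball of diameter $\approx 2^{-j}$, where annular shadows \emph{would} give bounded overlap). Shvartsman's construction is precisely designed to exploit this larger room and to spread the shadows so as to avoid accumulation at a single boundary point; a naive ball about a point of $cB\cap S^1$ does not.

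\textbf{The multiplicity argument does not close even conditionally.} Granting the single-scale packing estimate (which is fine), your cross-scale step is a non-sequitur: you assert that bounding $\sum_{B\in\mathcal{B}(z)}\mu(B)$ by $(1-\varkappa_{0,1}(S^1))\,\mu(B_R(z))$ ``controls $|\mathcal{B}(z)|$''. It does not. The balls in $\mathcal{B}(z)$ can live at arbitrarily many dyadic scales, each contributing measure $\mu(B)\lesssim (r_B/R)^q\,\mu(B_R(z))$ by the very reverse volume decay you invoke; a finite total mass is perfectly compatible with infinitely many terms. Concretely, in the half-space example above $\sum_j\mu(B_j)\approx 2^{-n}$ is bounded while $|\mathcal{B}(z)|=M$ is not. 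The conclusion $|\mathcal{B}(z)|\le C$ needs a genuinely combinatorial input tying together the different scales (as in the selection and packing scheme of Theorem~2.6 in \cite{Shv1}), not a measure estimate. As it stands the sketch has a real gap: neither the shadow construction nor the finite-overlap verification is correct.
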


The first useful technical observation is given by the following lemma.

\begin{Lm}
\label{Lm.3}
For each $c \geq 1$, there is a constant $C > 0$ such that if $\mathcal{F}:=\{B_{r_{i}}(x_{i})\}_{i=1}^{\overline{N}}$, $\overline{N} \in \mathbb{N}$, is an $(S^{1},c)$-nice family  with
$\max\{4cr(B):B \in \mathcal{F}\} \le 1$, then
\begin{equation}
\label{eqq.4.36}
\sum\limits_{i=1}^{\overline{N}} \frac{\mu(B_{r_{i}}(x_{i}))}{r^{p}_{i}}\Bigl(\mathcal{E}_{\mu\lfloor_{S^{1}}}(f,B_{2cr_{i}}(x_{i}))\Bigr)^{p}
\le C\int\limits_{S^{1}}(f^{\sharp}_{\mu\lfloor_{S^{1}}})^{p}\,d\mu(x).
\end{equation}
\end{Lm}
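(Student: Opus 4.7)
The plan is to construct for each $B_{r_i}(x_i) \in \mathcal{F}$ a ``shadow'' $U_i \subset S^1$ of comparable $\mu$-measure, use the Ahlfors--David codimension-$0$ regularity of $S^1$ to relate $\mu\lfloor_{S^1}$ to $\mu$ on balls of radius $\le 1$, and conclude via the bounded multiplicity of the shadows. Since $\mathcal{F}$ is $(S^1,c)$-nice I choose $y_i \in cB_{r_i}(x_i) \cap S^1$ and set $U_i := B_{cr_i}(y_i) \cap S^1$. By the triangle inequality $U_i \subset B_{2cr_i}(x_i)$, and for every $x \in U_i$ one has $B_{2cr_i}(x_i) \subset B_{4cr_i}(x)$; this is what will let me transfer an average based at $x_i$ to one based at a point of $S^1$.

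First I verify the basic measure comparisons. Since $y_i \in S^1 \in \mathcal{ADR}_{0}(\operatorname{X})$ and all radii in play are bounded by $1$, Definition \ref{Def.Ahlfors_David} together with \eqref{eqq.doubling} give $\mu(U_i) \approx \mu(B_{cr_i}(y_i)) \approx \mu(B_{r_i}(x_i))$ and $\mu\lfloor_{S^1}(B_{2cr_i}(x_i)) \approx \mu(B_{r_i}(x_i))$ (the lower bound uses $B_{cr_i}(y_i) \subset B_{2cr_i}(x_i)$, the upper bound $B_{2cr_i}(x_i) \cap S^1 \subset B_{3cr_i}(y_i) \cap S^1$). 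Next I check that $\{U_i\}$ has bounded covering multiplicity: if $U_i \cap U_j \neq \emptyset$ and, without loss of generality, $r_j \le r_i$, then the triangle inequality forces $x_j \in B_{4cr_i}(x_i)$, and the pairwise disjointness of $\{B_{r_i}(x_i)\}$ together with the doubling property caps the number of such $j$ by a constant depending only on $c$ and $C_\mu(1)$.

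The heart of the argument is a pointwise transfer. For $x \in U_i$, the inclusion $B_{2cr_i}(x_i) \subset B_{4cr_i}(x)$ with $4cr_i \le 1$ combined with the elementary bound $\mathcal{E}_{\mu\lfloor_{S^1}}(f, B') \le (\mu\lfloor_{S^1}(B)/\mu\lfloor_{S^1}(B'))\,\mathcal{E}_{\mu\lfloor_{S^1}}(f, B)$ for $B' \subset B$ together with the comparabilities above will yield
\begin{equation*}
\mathcal{E}_{\mu\lfloor_{S^1}}(f, B_{2cr_i}(x_i)) \le C\, r_i\, f^{\sharp}_{\mu\lfloor_{S^1}}(x),
\end{equation*}
with the factor $r_i$ arising from the $r$-normalization in the definition \eqref{eqq.simplified_sharp} of $f^{\sharp}_{\mu\lfloor_{S^1}}$. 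Raising to the $p$-th power, averaging over $U_i$ using $\mu(U_i) \approx \mu(B_{r_i}(x_i))$, and multiplying by $\mu(B_{r_i}(x_i))/r_i^p$ gives
\begin{equation*}
\frac{\mu(B_{r_i}(x_i))}{r_i^p}\,\mathcal{E}_{\mu\lfloor_{S^1}}(f, B_{2cr_i}(x_i))^p \le C \int_{U_i}(f^{\sharp}_{\mu\lfloor_{S^1}}(x))^p\,d\mu(x),
\end{equation*}
and summing over $i$ while invoking the bounded multiplicity from Step~2 yields \eqref{eqq.4.36}.

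The main obstacle is the mismatch between the ambient balls $B_{r_i}(x_i)$, whose centers need not lie in or near $S^1$, and the measure $\mu\lfloor_{S^1}$ supported on $S^1$. In the simple case ($\theta_1 > 0$) the porosity of $S$ produced Whitney-type families that resolved this automatically; here, with $\theta_1 = 0$, no porosity of $S^1$ is available. The remedy is to pay a controlled dilation $c$ to move from $x_i$ to $y_i \in S^1$, and then to exploit the defining feature of codimension zero -- the strong comparability $\mu(B \cap S^1) \approx \mu(B)$ for balls centered in $S^1$ with radius at most $1$ -- so that this dilation only costs a constant in measure, independent of $r_i$.
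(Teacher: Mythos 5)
Your approach is natural but the bounded multiplicity claim for the shadows $U_i := B_{cr_i}(y_i) \cap S^1$ is false, and this is a genuine gap. If $U_i \cap U_j \neq \emptyset$ with $r_j \le r_i$ then indeed $x_j \in B_{4cr_i}(x_i)$, but disjointness of $\{B_{r_j}(x_j)\}$ combined with doubling only bounds the number of such $j$ \emph{whose radii are comparable to $r_i$}. When the radii span many dyadic scales, arbitrarily many pairwise disjoint balls fit into $B_{4cr_i}(x_i)$. Concretely, take $\operatorname{X}=S^1 = \mathbb{R}^2$, fix $c$, and put $x_j$ on a fixed ray from the origin with $|x_j| = \frac{3}{2}\cdot 2^{-j}$ and $r_j = 2^{-j}/(10c)$: the balls $B_{r_j}(x_j)$ are pairwise disjoint, yet choosing $y_j$ on the segment from $x_j$ toward $0$ at distance $cr_j$ from $x_j$ gives $|y_j| < cr_j$, so $0 \in U_j$ for \emph{every} $j$. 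The covering multiplicity of $\{U_i\}$ at $0$ is therefore $\overline N$ and is not uniformly bounded, and your final summation collapses.

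The paper's proof avoids this through a dichotomy you omitted. It sets $\mathcal{F}_1 := \{B \in \mathcal{F} : \tfrac{1}{2}B \cap S^1 \neq \emptyset\}$; for these balls the shadow is $B \cap S^1$, a subset of the disjoint ball $B$ itself, so no overlap argument is needed, while $\mathcal{ADR}_0$-regularity of $S^1$ gives $\mu(B \cap S^1) \approx \mu(B)$. For $B \in \mathcal{F}\setminus\mathcal{F}_1$, the half-balls $\tfrac{1}{2}B$ form an $(S^1,2c)$-\emph{Whitney} family, and Proposition \ref{Prop.10.1} (Shvartsman's combinatorial lemma) supplies shadows $U(B) \subset 2cB$ with $\mu(U(B)) \gtrsim \mu(B)$ and bounded covering multiplicity. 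The Whitney hypothesis is exactly the missing mechanism: a Whitney ball's radius is comparable to its distance from $S^1$, so two Whitney balls whose shadows meet are forced to have comparable radii, which rules out the multi-scale pile-up above. Your pointwise transfer to $f^{\sharp}_{\mu\lfloor_{S^1}}$ and the estimate $\mu(U_i)\approx\mu(B_{r_i}(x_i))$ are fine and parallel the paper's; what is missing is the case split and the invocation of Proposition \ref{Prop.10.1} for the Whitney part.
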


\begin{proof}
Consider the family $\mathcal{F}_{1}:=\{B \in \mathcal{F}:\frac{1}{2}B \cap S^{1} \neq \emptyset\}$.
Given a ball $B=B_{r}(\underline{x}) \in \mathcal{F}_{1}$ we fix a point $x_{B} \in \frac{1}{2}B \cap S^{1}$. Clearly, we have the following inclusions
\begin{equation}
\notag
B_{\frac{r}{2}}(x_{B}) \subset B \subset 2cB \subset B_{(2c+1)r}(x), \quad x \in B \cap S^{1}.
\end{equation}
Using the above inclusions, \eqref{eqq.doubling} and \eqref{eqq.Ahlfors_David} (for $\theta=0$) we get
\begin{equation}
\notag
\mu(B \cap S) \le \mu(B) \le C\mu(B_{\frac{r}{2}}(x_{B})) \le C\mu(B_{\frac{r}{2}}(x_{B}) \cap S) \le C \mu(B \cap S).
\end{equation}
Hence, applying Proposition \ref{Prop.oscillation_comparison} with $\mathfrak{m}_{k}=2^{k\theta}\mu\lfloor_{S^{1}}$, $k \in \mathbb{N}_{0}$, we obtain
\begin{equation}
\notag
\mu(B)\Bigl(\mathcal{E}_{\mu\lfloor_{S^{1}}}(f,2cB)\Bigr)^{p} \le C\mu(B \cap S)\Bigl(\mathcal{E}_{\mu\lfloor_{S^{1}}}(f,(2c+1)B_{r}(x))\Bigr)^{p} \quad \text{for all} \quad x \in B \cap S^{1}.
\end{equation}
As a result, since the family $\mathcal{F}_{1}$ is disjoint, we get (we take into account \eqref{eqq.simplified_sharp})
\begin{equation}
\label{eqq.4.38}
\sum\limits_{B \in \mathcal{F}_{1}}\frac{\mu(B)}{(r(B))^{p}}\Bigl(\mathcal{E}_{\mu\lfloor_{S^{1}}}(f,2cB)\Bigr)^{p}
\le C\sum\limits_{B \in \mathcal{F}_{1}}\int\limits_{B \cap S^{1}}(f^{\sharp}_{\mu\lfloor_{S^{1}}}(x))^{p}\,d\mu(x)
\le C\int\limits_{S^{1}}(f^{\sharp}_{\mu\lfloor_{S^{1}}})^{p}\,d\mu(x).
\end{equation}

Consider the family $\mathcal{F}_{2}:=\{\frac{1}{2}B: B \in \mathcal{F} \setminus \mathcal{F}_{1}\}$. Clearly, $\mathcal{F}_{2}$ is an $(S^{1},2c)$-Whitney family of balls.
Using Proposition \ref{Prop.oscillation_comparison} with $\mathfrak{m}_{k}=2^{k\theta}\mu\lfloor_{S^{1}}$ and taking into account Proposition \ref{Prop.10.1} we get, for each $B \in \mathcal{F}_{2}$,
\begin{equation}
\notag
\mu(B)\Bigl(\mathcal{E}_{\mu\lfloor_{S^{1}}}(f,4cB)\Bigr)^{p} \le C\mu(U(B))\Bigl(\mathcal{E}_{\mu\lfloor_{S^{1}}}(f,(4c+1)B_{r(B)}(x))\Bigr)^{p} \quad \text{for all} \quad x \in U(B).
\end{equation}
It follows from Proposition \ref{Prop.10.1} that for some $C > 0$ we have
$$
\sup\limits_{x \in \operatorname{X}}\sum\limits_{B \in \mathcal{F}_{2}}\chi_{U(2B)}(x) \le C.
$$
As a result, we obtain (we take into account \eqref{eqq.simplified_sharp})
\begin{equation}
\label{eqq.4.39}
\sum\limits_{B \in \mathcal{F}_{2}} \frac{\mu(B)}{(r_{B})^{p}}\Bigl(\mathcal{E}_{\mu\lfloor_{S^{1}}}(f,4cB)\Bigr)^{p}
\le C\sum\limits_{B \in \mathcal{F}_{2}}\int\limits_{U(B)}(f^{\sharp}_{\mu\lfloor_{S^{1}}})^{p}\,d\mu(x)
\le C\int\limits_{S^{1}}(f^{\sharp}_{\mu\lfloor_{S^{1}}})^{p}\,d\mu(x).
\end{equation}
Combining \eqref{eqq.4.38} and \eqref{eqq.4.39} we obtain the required estimate and complete the proof.

\end{proof}

The second useful technical observation is given by the following lemma.

\begin{Lm}
\label{Lm.4}
Given $c \geq 1$, there is a constant $C > 0$ such that if $\mathcal{B}:=\{B_{r_{i}}(x_{i})\}_{i=1}^{\overline{N}}$, $\overline{N} \in \mathbb{N}$, is an $(S^{2},c)$-nice family  with
$\max\{8cr_{i}: 1 \le i \le \overline{N}\} \le 1$, then
\begin{equation}
\label{eqq.4.36}
\sum\limits_{i=1}^{\overline{N}} \frac{\mu(B_{r_{i}}(x_{i}))}{r^{p}_{i}}\Bigl(\mathcal{E}_{\mathcal{H}_{\theta_{2}}\lfloor_{S^{2}}}(f,B_{2cr_{i}}(x_{i}))\Bigr)^{p}
\le C\|f|\operatorname{B}_{p}^{1-\frac{\theta_{2}}{p}}(S^{2})\|^{p}.
\end{equation}
\end{Lm}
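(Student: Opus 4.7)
The plan is to mirror the proof of Lemma \ref{Lm.3}, replacing the Sobolev-type sharp maximal function $f^{\sharp}_{\mu\lfloor_{S^{1}}}$ by the oscillation integrals appearing in the Besov seminorm of Definition \ref{Def.Besov}. The essential new difficulty is that the centers $x_{i}$ of the balls in the $(S^{2},c)$-nice family $\mathcal{B}$ need not lie in $S^{2}$, so Proposition \ref{Prop.oscillation_comparison} cannot be applied directly to $\mathcal{E}_{\mathcal{H}_{\theta_{2}}\lfloor_{S^{2}}}(f, B_{2cr_{i}}(x_{i}))$; instead I would use the codimension-$\theta_{2}$ Ahlfors--David regularity of $S^{2}$ together with the local doubling of $\mu$ to carry out a center transfer by hand.

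Concretely, for each $B_{i} = B_{r_{i}}(x_{i}) \in \mathcal{B}$ I would pick $y_{i} \in cB_{i} \cap S^{2}$ using the $(S^{2},c)$-nice condition; then $\operatorname{d}(x_{i}, y_{i}) \le cr_{i}$, and the chain $B_{cr_{i}}(y_{i}) \subset B_{2cr_{i}}(x_{i}) \subset B_{3cr_{i}}(y_{i})$ combined with Definition \ref{Def.Ahlfors_David} applied at $y_{i} \in S^{2}$ and with the doubling of $\mu$ yields
\[
\mathcal{H}_{\theta_{2}}(B_{3cr_{i}}(y_{i}) \cap S^{2}) \le C\,\mathcal{H}_{\theta_{2}}(B_{2cr_{i}}(x_{i}) \cap S^{2}) \approx \mu(B_{i})/r_{i}^{\theta_{2}}.
\]
Picking the constant $c_{0}$ that realises the infimum for $\mathcal{E}_{\mathcal{H}_{\theta_{2}}\lfloor_{S^{2}}}(f, B_{3cr_{i}}(y_{i}))$ and using the trivial bound $\int_{B_{2cr_{i}}(x_{i}) \cap S^{2}}|f-c_{0}|\,d\mathcal{H}_{\theta_{2}} \le \int_{B_{3cr_{i}}(y_{i}) \cap S^{2}}|f-c_{0}|\,d\mathcal{H}_{\theta_{2}}$ produces the transfer $\mathcal{E}_{\mathcal{H}_{\theta_{2}}\lfloor_{S^{2}}}(f, B_{2cr_{i}}(x_{i})) \le C\,\mathcal{E}_{\mathcal{H}_{\theta_{2}}\lfloor_{S^{2}}}(f, B_{3cr_{i}}(y_{i}))$. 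Since $y_{i} \in S^{2}$, Proposition \ref{Prop.oscillation_comparison} applied to the sequence $\{2^{k(\theta-\theta_{2})}\mathcal{H}_{\theta_{2}}\lfloor_{S^{2}}\}$ of Proposition \ref{Prop.special_regular_measures} then gives, for every $x \in B_{r_{i}}(y_{i}) \cap S^{2}$, with $k_{i} := k(r_{i})$ and $k_{0} = k_{0}(c) \in \mathbb{N}$ suitably chosen,
\[
\mathcal{E}_{\mathcal{H}_{\theta_{2}}\lfloor_{S^{2}}}(f, B_{3cr_{i}}(y_{i})) \le C\,\mathcal{E}_{\mathcal{H}_{\theta_{2}}\lfloor_{S^{2}}}(f, B_{k_{i}-k_{0}}(x)).
\]

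Raising the resulting chain to the $p$-th power and integrating over $B_{r_{i}}(y_{i}) \cap S^{2}$, together with the AD-type identity $\mathcal{H}_{\theta_{2}}(B_{r_{i}}(y_{i}) \cap S^{2}) \approx \mu(B_{i})/r_{i}^{\theta_{2}}$, upgrades this to the per-ball estimate
\[
\frac{\mu(B_{i})}{r_{i}^{p}}\bigl(\mathcal{E}_{\mathcal{H}_{\theta_{2}}\lfloor_{S^{2}}}(f, B_{2cr_{i}}(x_{i}))\bigr)^{p} \le C\cdot 2^{k_{i}(p-\theta_{2})} \int_{B_{r_{i}}(y_{i}) \cap S^{2}} \bigl(\mathcal{E}_{\mathcal{H}_{\theta_{2}}\lfloor_{S^{2}}}(f, B_{k_{i}-k_{0}}(x))\bigr)^{p}\,d\mathcal{H}_{\theta_{2}}(x).
\]
Summing over $i$ and grouping the sum by the common value $k_{i} = k$, the family $\{B_{r_{i}}(y_{i})\}$ has uniformly bounded overlap at each fixed scale $k$, which follows from the disjointness of $\{B_{i}\}$, the bound $\operatorname{d}(x_{i}, y_{i}) \le cr_{i}$, and local doubling of $\mu$, via a standard packing argument in the spirit of Proposition \ref{Prop.overlapping}. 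Shifting the index $k \mapsto k - k_{0}$ then controls the tail by a multiple of $(\mathcal{BN}^{1-\theta_{2}/p}_{p}(f))^{p}$, while the finitely many surviving low-index terms are absorbed into $\|f|L_{p}(\mathcal{H}_{\theta_{2}}\lfloor_{S^{2}})\|^{p}$ via Lemma \ref{Lm.estimate_by_lp_norms}; adding these gives the required bound by $C\|f|\operatorname{B}^{1-\theta_{2}/p}_{p}(S^{2})\|^{p}$.

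The main obstacle is the first center-transfer step: because $x_{i}$ is not required to lie in $S^{2}$, Proposition \ref{Prop.oscillation_comparison} is unavailable in the desired direction, and one must instead use the AD-regularity of $S^{2}$ and local doubling of $\mu$ to compare $\mathcal{H}_{\theta_{2}}(B_{2cr_{i}}(x_{i}) \cap S^{2})$ with $\mathcal{H}_{\theta_{2}}(B_{3cr_{i}}(y_{i}) \cap S^{2})$ by hand. A secondary technical point is that the auxiliary balls $\{B_{r_{i}}(y_{i})\}$ need not be disjoint globally, so the bounded-overlap count is only scale by scale and must be tracked carefully against the weight $2^{k_{i}(p-\theta_{2})}$ in the final summation.
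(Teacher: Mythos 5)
Your proof is correct and follows essentially the same route as the paper: in both arguments, the per-ball oscillation over $B_{2cr_{i}}(x_{i})$ is transferred to oscillations over scale-$2^{-k_{i}}$ balls centred at points of $S^{2}$, multiplied into an integral over an AD-comparable piece of $S^{2}$ inside the ball, summed using bounded overlap at each dyadic scale (a packing argument resting on the disjointness of $\{B_{i}\}$), and re-indexed by a bounded shift to produce the Besov seminorm with the $j=0$ term absorbed into $\|f|L_{p}(\mathcal{H}_{\theta_{2}}\lfloor_{S^{2}})\|$. Your explicit, by-hand measure comparison for the centre transfer is a slightly more careful rendering of what the paper compresses into an informal appeal to Proposition~\ref{Prop.oscillation_comparison} (whose stated hypotheses require the smaller ball to be centred in $S^{2}$, which $B_{2cr_{i}}(x_{i})$ is not), and you are right that the fix is to re-centre at a point of $cB_{i}\cap S^{2}$ and compare $\mathcal{H}_{\theta_{2}}$-masses directly via AD regularity and local doubling.
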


\begin{proof}
It is easy to see that, given a ball $B=B_{r_{i}}(x_{i}) \in \mathcal{B}$, there is a ball $\widetilde{B}$ of the same radius
centered at some point $\tilde{x}_{i} \in S^{2}$ such that $\widetilde{B} \subset 2cB$. Furthermore, it is clear that $2cB \subset B_{4cr_{i}}(x)$
for all $x \in 2cB \cap S^{2}$. Given $k \in \mathbb{N}_{0}$, we put $\mathcal{B}(k):=\{B \in \mathcal{B}:r_{B} \in (2^{-k-1},2^{-k}]\}$.
Hence, using Definition \ref{Def.Ahlfors_David} and applying Proposition \ref{Prop.oscillation_comparison} with $\mathfrak{m}_{k}=2^{k(\theta-\theta_{2})}\mathcal{H}_{\theta_{2}}\lfloor_{S^{2}}$,
it is easy to see that, for each ball $B \in \mathcal{B}(k)$,
\begin{equation}
\mu(B)\Bigl(\mathcal{E}_{\mathcal{H}_{\theta_{2}}\lfloor_{S^{2}}}(f,2cB)\Bigr)^{p} \le 2^{-k\theta_{2}}\mathcal{H}_{\theta_{2}}(2cB \cap S^{2})\inf\limits_{x \in 2cB \cap S^{2}}\Bigl(\mathcal{E}_{\mathcal{H}_{\theta_{2}}\lfloor_{S^{2}}}(f,4cB_{k}(x))\Bigr)^{p}.
\end{equation}
Hence, taking into account that the covering multiplicity of $\{2cB:B \in \mathcal{B}(k)\}$ is bounded above by some constant $C > 0$ independent on $k$, we see that
\begin{equation}
\label{eqq.3.1.77}
\begin{split}
&\sum\limits_{B \in \mathcal{B}}\frac{\mu(B)}{(r_{B})^{p}}\Bigl(\mathcal{E}_{\mathcal{H}_{\theta_{2}}\lfloor_{S^{2}}}(f,2cB)\Bigr)^{p} \\
&\le C\sum\limits_{k=0}^{\infty}2^{k(p-\theta_{2})}\sum\limits_{B \in \mathcal{B}(k)}\int\limits_{2cB \cap S^{2}}\Bigl(\mathcal{E}_{\mathcal{H}_{\theta_{2}}\lfloor_{S^{2}}}(f,4cB_{k}(x))\Bigr)^{p} \,d\mathcal{H}_{\theta_{2}}(x)\\
&\le
C \sum\limits_{k:\mathcal{B}(k) \neq \emptyset}2^{k(p-\theta_{2})}\int\limits_{S^{2}}\Bigl(\mathcal{E}_{\mathcal{H}_{\theta_{2}}\lfloor_{S^{2}}}(f,4cB_{k}(x))\Bigr)^{p}\,d\mathcal{H}_{\theta_{2}}(x).
\end{split}
\end{equation}
In accordance with the assumptions of the lemma $\max\{8cr_{B}:B \in \mathcal{B}\} \le 1$. Thus, if $B \in \mathcal{B}(k)$ for some $k \in \mathbb{N}_{0}$, then
$4c2^{-k} \le 1$. Hence, for each $k \in \mathbb{N}_{0}$ with $\mathcal{B}(k) \neq \emptyset$ we have $2^{-j(k)} \le 1$, where $j(k)$ is the maximum among all 
$j \in \mathbb{N}_{0}$ satisfying $4c2^{-k} \le 2^{-j}$. Taking into account this observation we continue \eqref{eqq.3.1.77} and get 
\begin{equation}
\notag
\sum\limits_{B \in \mathcal{B}}\frac{\mu(B)}{(r_{B})^{p}}\Bigl(\mathcal{E}_{\mathcal{H}_{\theta_{2}}\lfloor_{S^{2}}}(f,2cB)\Bigr)^{p}
\le C \sum\limits_{j=0}^{\infty}2^{j(p-\theta_{2})}\int\limits_{S^{2}}\Bigl(\mathcal{E}_{\mathcal{H}_{\theta_{2}}\lfloor_{S^{2}}}(f,B_{j}(x))\Bigr)^{p}\,d\mathcal{H}_{\theta_{2}}(x).
\end{equation}
As a result, taking into account Definition \ref{Def.Besov} we complete the proof.
\end{proof}

Now we are ready to establish the first crucial result in this subsection.

\begin{Th}
\label{Th.3}
For each $c \geq 1$, there exists a constant $C > 0$ such that
\begin{equation}
\label{eqq.4.44'}
\mathcal{BSN}_{p,\{\mathfrak{m}_{k}\},c}(f) \le C \Bigl(\|f^{\sharp}_{\mu\lfloor_{S^{1}}}|L_{p}(S^{1},\mu)\|
+\|f|L_{p}(S^{1},\mu)\|+\|f|\operatorname{B}_{p}^{1-\frac{\theta}{p}}(S^{2})\|+\mathcal{GL}^{(3)}_{p}(f)\Bigr).
\end{equation}
\end{Th}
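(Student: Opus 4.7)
My plan is to fix an arbitrary $(S,c)$-nice family $\mathcal{F}=\{B_{r_{i}}(x_{i})\}_{i=1}^{\overline{N}}$ and decompose it into subfamilies whose contributions to $\mathcal{BSN}_{p,\{\mathfrak{m}_{k}\},c}(f)$ match the four terms in \eqref{eqq.4.44'}. A ``large-scale'' subfamily $\{B\in\mathcal{F}:r_{B}\geq\delta\}$ for some $\delta=\delta(c)>0$ is handled by Lemma \ref{Lm.largescale_estimate} together with Proposition \ref{Prop.different_lp_norms}. For the remaining small balls I apply Proposition \ref{Prop.combinatorial_trick} (after enlarging the dilation from $c$ to some $c'\in\{c+1,c+2,c+3\}$) to partition them into three classes, according to whether $c'B_{i}$ meets only $S^{1}$, only $S^{2}$, or both pieces.

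On a type-1 ball the measure $\mathfrak{m}_{k(r_{i})}$ restricted to $2cB_{i}$ is a constant multiple of $\mu\lfloor_{S^{1}}$, so $\mathcal{E}_{\mathfrak{m}_{k(r_{i})}}(f,2cB_{i})=\mathcal{E}_{\mu\lfloor_{S^{1}}}(f,2cB_{i})$ and Lemma \ref{Lm.3} yields the bound by $\|f^{\sharp}_{\mu\lfloor_{S^{1}}}|L_{p}(S^{1},\mu)\|^{p}$. Type-2 balls are handled symmetrically by Lemma \ref{Lm.4}, giving the Besov bound. For mixed balls I apply Lemma \ref{Lm.estimate_double_averaging_2} with $\mathcal{I}=\{1,2\}$ to decompose $\mathcal{E}_{\mathfrak{m}_{k(r_{i})}}(f,2cB_{i})$ into the two one-sided oscillations (absorbed as above) plus a cross-term double average
\begin{equation*}
\mathcal{A}_{i}:=\fint_{c'B_{i}\cap S^{1}}\fint_{c'B_{i}\cap S^{2}}|f(y)-f(z)|\,d\mu(y)\,d\mathcal{H}_{\theta}(z).
\end{equation*}

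To bound $\sum_{i}\mu(B_{i})r_{i}^{-p}\mathcal{A}_{i}^{p}$ by $(\mathcal{GL}^{(3)}_{p}(f))^{p}$, for each mixed $B_{i}$ I select $y_{i}\in c'B_{i}\cap S^{1}$, $z_{i}\in c'B_{i}\cap S^{2}$, and set $k'=k(r_{i})-c_{0}$ for an absolute $c_{0}$. I then use: (i) the Ahlfors--David regularities of $S^{1}$ and $S^{2}$ giving $\mu(c'B_{i}\cap S^{1})\approx\mu(B_{i})$ and $\mathcal{H}_{\theta}(c'B_{i}\cap S^{2})\approx\mu(B_{i})r_{i}^{-\theta}$; (ii) Proposition \ref{Prop.weight_ineq} giving $\operatorname{w}_{k'}(y,z)\approx 1/\mu(B_{i})$ on $(c'B_{i}\cap S^{1})\times(c'B_{i}\cap S^{2})$; and (iii) a doubling argument showing $\mathcal{A}_{i}\leq C\, A^{1,2}_{k'-1}(f)(y,z)$ for each $(y,z)$ in a local region $E_{i}\subset\Sigma^{1,2}_{k'-1}$. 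Combining these ingredients, taking $p$-powers, and locally averaging produce the pointwise estimate
\begin{equation*}
\frac{\mu(B_{i})}{r_{i}^{p}}\mathcal{A}_{i}^{p}\le C\,2^{k'(p-\theta)}\int_{E_{i}}\operatorname{w}_{k'-1}(y,z)\bigl(A^{1,2}_{k'-1}(f)(y,z)\bigr)^{p}\,d\mu(y)\,d\mathcal{H}_{\theta}(z).
\end{equation*}
Summing over $i$ at each fixed scale, using Proposition \ref{Prop.overlapping} to bound the overlap of the $E_{i}$'s among disjoint balls of comparable radius, and then comparing $\operatorname{w}_{k'-1}$ with $\operatorname{w}_{k(r_{i})}$ via \eqref{eqq.weights}, recovers the $(i,j)=(1,2)$ summand of $(\mathcal{GL}^{(3)}_{p}(f))^{p}$.

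The main obstacle is step (iii): the fixed double average $\mathcal{A}_{i}$ is taken over balls that shift with their centers, so relating it to $A^{1,2}_{k'-1}$ evaluated pointwise on a nearby piece of $\Sigma^{1,2}_{k'-1}$ requires the doubling properties of both $\mu\lfloor_{S^{1}}$ and $\mathcal{H}_{\theta}\lfloor_{S^{2}}$ (Remark \ref{Rem.Ahlfors_doubling}) combined with a one-unit scale drop to absorb the shift; only then does the passage from the discrete sum over balls to a weighted integral over $\Sigma^{1,2}$ become clean.
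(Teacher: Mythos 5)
Your proposal follows essentially the same architecture as the paper's proof of Theorem \ref{Th.3}: separate a large-radius subfamily and dispatch it with Lemma \ref{Lm.largescale_estimate}; partition the small balls by which of $S^{1}$, $S^{2}$ the enlarged ball meets; dispatch the single-piece balls via Lemma \ref{Lm.3} (for $S^{1}$) and Lemma \ref{Lm.4} (for $S^{2}$); decompose the mixed balls via Lemma \ref{Lm.estimate_double_averaging_2}; and control the cross-term by passing to the gluing functional $\mathcal{GL}^{(3)}_{p}$ through the weight $\operatorname{w}_{k}$, a scale drop, and an overlap bound. Your step (iii) matches the paper's Steps 5--6 almost verbatim (Proposition \ref{Prop.weight_ineq}, the Ahlfors--David comparison $\mu(c'B\cap S^{1})\approx\mu(B)$ and $\mathcal{H}_{\theta}(c'B\cap S^{2})\approx\mu(B)r_{B}^{-\theta}$, the doubling-based passage from the shifting double average to a nearby $A^{1,2}_{k'}$, and the covering-multiplicity argument).

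Two remarks. First, invoking Proposition \ref{Prop.combinatorial_trick} is superfluous when $N=2$: the only thing you need is to split the small balls according to whether $2cB$ meets $S^{1}$ only, $S^{2}$ only, or both, which is a direct trichotomy (the paper does exactly this). Second, and this is the substantive issue with the route you chose: you partition by the behaviour of $c'B_{i}$ with $c'\in\{c+1,c+2,c+3\}$, but the quantity you must estimate is $\mathcal{E}_{\mathfrak{m}_{k(r_{i})}}(f,2cB_{i})$. For your type-1 reduction (``$\mathfrak{m}_{k}\lfloor_{2cB_{i}}$ is a constant multiple of $\mu\lfloor_{S^{1}}$'') you need $2cB_{i}\cap S^{2}=\emptyset$, which only follows from $c'B_{i}\cap S^{2}=\emptyset$ if $2c\le c'$, i.e.\ $c\le 3$; for larger $c$ the classes don't align with the oscillation being computed. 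The fix is to drop the combinatorial trick entirely and partition directly by whether $2cB_{i}$ meets each piece (mirroring the paper's $\mathcal{B}^{1},\mathcal{B}^{2},\mathcal{B}^{3}$, with $2cB$ instead of $cB$). Once that is done the rest of your outline is sound and yields the same estimate as the paper.
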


\begin{proof}
Let $\underline{k} \in \mathbb{N}$ be the minimal among all $k \in \mathbb{N}$ satisfying $2^{-k} < 1/4c$. 
Let $\mathcal{B}$ be an arbitrary $(S,c)$-nice family of closed balls. 
We define the auxiliary families $\underline{\mathcal{B}}:=\{B \in \mathcal{B}:r_{B} \geq 2^{-\underline{k}-1}\}$ and $\overline{\mathcal{B}}:=\mathcal{B} \setminus \underline{B}$. 
We put
\begin{equation}
\mathcal{B}^{1}:=\{\overline{B} \in \mathcal{B}:cB \cap S^{2}=\emptyset\}, \quad \mathcal{B}^{2}:=\{B \in \overline{\mathcal{B}}:cB \cap S^{1}=\emptyset\}.
\end{equation}
For each $B \in \mathcal{B}$ we set $k(B):=k(r_{B})$, as usual. Recall \eqref{eqq.tricky_average}. We split the rest of the proof into several steps.

\textit{Step 1.}
For each $B \in \mathcal{B}^{1}$ we have $\widetilde{\mathcal{E}}_{\mathfrak{m}_{k(B)}}(f,cB)=\mathcal{E}_{\mu\lfloor_{S^{1}}}(f,2cB))$. Hence, by Lemma \ref{Lm.3}
\begin{equation}
\begin{split}
\label{eqq.4.46}
&\sum\limits_{B \in \mathcal{B}^{1}} \frac{\mu(B)}{(r_{B})^{p}}\Bigl(\widetilde{\mathcal{E}}_{\mathfrak{m}_{k(B)}}(f,cB)\Bigr)^{p} \le C\int\limits_{S^{1}}(f^{\sharp}_{\mu\lfloor_{S^{1}}})^{p}\,d\mu(x).
\end{split}
\end{equation}

\textit{Step 2.}
For each $B \in \mathcal{B}^{2}$ we have $\widetilde{\mathcal{E}}_{\mathfrak{m}_{k(B)}}(f,cB)=\mathcal{E}_{\mathcal{H}_{\theta_{2}}\lfloor_{S^{2}}}(f,2cB))$. By Lemma \ref{Lm.4} we get
\begin{equation}
\label{eqq.4.47}
\sum\limits_{B \in \mathcal{B}^{2}} \frac{\mu(B)}{(r_{B})^{p}}\Bigl(\widetilde{\mathcal{E}}_{\mathfrak{m}_{k(B)}}(f,cB)\Bigr)^{p}
\le C\|f|\operatorname{B}_{p}^{1-\frac{\theta_{2}}{p}}(S^{2})\|^{p}.
\end{equation}

\textit{Step 3.}
We put $\mathcal{B}^{3}:=\overline{\mathcal{B}} \setminus (\mathcal{B}^{1}\cup\mathcal{B}^{2})$, i.e.,
$B \in \mathcal{B}^{3}$ if and only if $cB \cap S^{1} \neq \emptyset$, $cB \cap S^{2} \neq \emptyset$
and $B \in \overline{\mathcal{B}}$.
Given $k \in \mathbb{Z}$, we consider the family
\begin{equation}
\label{eqq.ball_with_wave}
\mathcal{B}^{3}(k):=\{B \in \mathcal{B}^{3}:r(B) \in (2^{-k-1},2^{-k}]\}. 
%\quad \widetilde{\mathcal{B}}^{3}(k):=\{B \in \mathcal{B}^{3}:3cr(B) \in (2^{-k-1},2^{-k}]\}.
\end{equation}

Note that, given $k \in \mathbb{N}_{0}$, for each ball $B \in \mathcal{B}^{3}(k)$, there exist points $x^{1}_{k}(B) \in S^{1}$ and $x^{2}_{k}(B) \in S^{2}$
such that
\begin{equation}
\label{eqq.imp.incl}
B_{k}(x^{i}_{k}(B)) \subset 2cB \subset 3cB_{k}(x^{i}_{k}(B)), \quad i=1,2.
\end{equation}
%$2cB \cap S^{1} \subset S^{1,2}_{k}$ and $2cB \cap S^{2} \subset S^{2,1}_{k}$ for all $B \in \widetilde{\mathcal{B}}^{3}(k)$.
Thus, an application of Lemma \ref{Lm.estimate_double_averaging_2} gives
\begin{equation}
\label{eqq.4.49}
\begin{split}
&\mathcal{E}_{\mathfrak{m}_{k}}(f,2cB) \le C\Bigl(\mathcal{E}_{\mu\lfloor_{S^{1}}}(f,2cB)+\mathcal{E}_{\mathcal{H}_{\theta_{2}}\lfloor_{S^{2}}}(f,2cB)\\
&+\fint\limits_{2cB \cap S^{1}}\fint\limits_{2cB \cap S^{2}}|f(y')-f(z')|\,d\mu(y')d\mathcal{H}_{\theta_{2}}(z')\Bigr).
\end{split}
\end{equation}

\textit{Step 4.} From Lemmas \ref{Lm.3}, \ref{Lm.4} it follows that
\begin{equation}
\begin{split}
\label{eqq.4.50}
&\sum\limits_{B \in \mathcal{B}^{3}} \frac{\mu(B)}{(r_{B})^{p}}\Bigl[\Bigl(\mathcal{E}_{\mu\lfloor_{S^{1}}}(f,2cB)\Bigr)^{p}
+\Bigl(\mathcal{E}_{\mathcal{H}_{\theta_{2}}\lfloor_{S^{2}}}(f,2cB)\Bigr)^{p}\Bigr]\\
&\le C\Bigl[\int\limits_{S^{1}}(f^{\sharp}_{\mu\lfloor_{S^{1}}})^{p}\,d\mu(x)+\|f|\operatorname{B}_{p}^{1-\frac{\theta_{2}}{p}}(S^{2})\|^{p}\Bigr].
\end{split}
\end{equation}

\textit{Step 5.}
Fix for a moment $k \in \mathbb{N}_{0}$ and $B \in \mathcal{B}^{3}(k)$.
By \eqref{eqq.weight} and Proposition \ref{Prop.weight_ineq} it is easy to see that 
\begin{equation}
\notag
(\mu(2cB))^{-1} \le C\operatorname{w}_{k}(y,z) \quad
\text{for all} \quad (y,z) \in 2cB  \times 2cB.
\end{equation}
At the same time, combining Definition \ref{Def.Ahlfors_David} with inclusions \eqref{eqq.imp.incl} and taking into account \eqref{eqq.doubling} and Remark \ref{Rem.Ahlfors_doubling}
we get
\begin{equation}
\notag
\mu(2cB) \le C\mu(2cB \cap S^{1}), \quad \mu(B)2^{k\theta_{2}} \le C \mathcal{H}_{\theta_{2}}(2cB \cap S^{2}).
\end{equation}
As a result, we deduce existence of $C > 0$ such that, for each $(y,z) \in 2cB \cap S^{1} \times 2cB \cap S^{2}$,
\begin{equation}
\begin{split}
\label{eqq.useful_measure_estimate}
&\frac{\mu(B)}{(r_{B})^{p}} \le C 2^{kp}\mu(B) \frac{\mathcal{H}_{\theta_{2}}(2cB \cap S^{2})}{\mathcal{H}_{\theta_{2}}(2cB \cap S^{2})}\mu(2cB)\operatorname{w}_{k}(y,z)\\
&\le C 2^{k(p-\theta_{2})}\mathcal{H}_{\theta_{2}}(2cB \cap S^{2})\mu(2cB \cap S^{1})\operatorname{w}_{k}(y,z).
\end{split}
\end{equation}
Furthermore, using \eqref{eqq.doubling}, Remark \ref{Rem.Ahlfors_doubling} and \eqref{eqq.ball_with_wave} it is easy to see that
\begin{equation}
\label{eqq.useful_average_estimate}
\fint\limits_{2cB \cap S^{1}}\fint\limits_{2cB \cap S^{2}}|f(y')-f(z')|\,d\mu(y')d\mathcal{H}_{\theta_{2}}(z') \le C \inf A^{1,2}_{k-\underline{k}}(f)(y,z),
\end{equation}
where the infimum is taken over all $(y,z) \in 2cB \cap S^{1} \times 2cB \cap S^{2}$.

\textit{Step 6.}
Given $k \in \mathbb{N}_{0}$, by Definition the covering multiplicity of the family $\mathcal{B}^{3}(k)$ is bounded above by some constant $C > 0$ independent of $k$.
Using this fact in combination with \eqref{eqq.useful_measure_estimate}, \eqref{eqq.useful_average_estimate}
and taking into account that $2cB \cap S^{1} \times 2cB \cap S^{2} \subset \Sigma^{1,2}_{k-\underline{k}}$ we obtain, for each $B \in \mathcal{B}^{3}(k)$, the following estimate
\begin{equation}
\label{eqq.4.52'}
\begin{split}
&\sum\limits_{k=\underline{k}+1}^{\infty}\sum\limits_{B \in \mathcal{B}^{3}(k)}\frac{\mu(B)}{(r_{B})^{p}}\Bigl(\fint\limits_{2cB \cap S^{1}}\fint\limits_{2cB \cap S^{2}}|f(y)-f(z)|\,d\mu(y)d\mathcal{H}_{\theta_{2}}(z)\Bigr)^{p}\\
&\le C \sum\limits_{k=\underline{k}+1}^{\infty}2^{k(p-\theta_{2})}\iint\limits_{\Sigma^{1,2}_{k-\underline{k}}}\operatorname{w}_{k-\underline{k}}(y,z)(A^{1,2}_{k-\underline{k}}(f)(y,z))^{p}\,d\mu(y)d\mathcal{H}_{\theta_{2}}(z) \le C \Bigl(\mathcal{GL}^{1}_{p}(f)\Bigr)^{p}.
\end{split}
\end{equation}
Finally, using Lemma \ref{Lm.largescale_estimate} in combination with Proposition \ref{Prop.different_lp_norms} we have
\begin{equation}
\label{eqq.4.53'}
\begin{split}
&\sum\limits_{B \in \underline{\mathcal{B}}}\frac{\mu(B)}{(r_{B})^{p}}\Bigl(\mathcal{E}_{\mathfrak{m}_{k}}(f,2cB)\Bigr)^{p} \le \|f|L_{p}(\mathfrak{m}_{0})\|^{p}.
\end{split}
\end{equation}

\textit{Step 7.}
Finally, combining estimates \eqref{eqq.4.46}, \eqref{eqq.4.47}, \eqref{eqq.4.49}, \eqref{eqq.4.50}, \eqref{eqq.4.52'}, \eqref{eqq.4.53'} we arrive at \eqref{eqq.4.44'} and complete the proof.
\end{proof}

In order to establish the following result we recall that the set $S^{2}$ is $\sigma_{2}(S)$-porous.

\begin{Th}
\label{Th.4}
There is a constant $\underline{c} \geq 1$ depending on $\sigma_{2}(S)$ such that the following holds. For each $c \geq \underline{c}$, there is a constant $C > 0$ such that
\begin{equation}
\mathcal{GL}^{(3)}_{p}(f) \le C \mathcal{BSN}_{p,\{\mathfrak{m}_{k}\},c}(f).
\end{equation}
\end{Th}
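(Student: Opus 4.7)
The plan is to realize $\mathcal{GL}^{(3)}_{p}(f)^{p}$ as the $\mathcal{BSN}$-energy of a single multi-scale $(S,c)$-nice family built from inscribed ``porosity balls'' in $\operatorname{X}\setminus S^{2}$. The constant $\underline{c}$ ends up depending on $\sigma_{2}(S)$ precisely because the porosity of $S^{2}$ is what guarantees these balls can be enlarged by the factor $c$ to meet $S$.

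\emph{Reduction to a single-oscillation estimate.} Fix $k\in\mathbb{N}_{0}$ and $(y,z)\in \Sigma^{1,2}_{k}$. Since $\operatorname{d}(y,z)\le 2^{-k}$, the ball $2B_{k}(y)$ contains $B_{k}(y)$ and $B_{k}(z)$, so Lemma \ref{Lm.estimate_double_averaging_1} yields $A^{1,2}_{k}(f)(y,z)\le C\,\mathcal{E}_{\mathfrak{m}_{k}}(f,2B_{k}(y))$. Integrating $\operatorname{w}_{k}(y,z)$ over $z\in B_{k}(y)\cap S^{2}$ by Proposition \ref{Prop.integral_of_weights}, multiplying by $2^{k(p-\theta_{2})}$, summing in $k$, and handling the symmetric $(i,j)=(2,1)$ term identically, reduces the claim to proving
\begin{equation*}
\Lambda:=\sum_{k=1}^{\infty} 2^{kp}\int_{S^{1,2}_{k}} \bigl(\mathcal{E}_{\mathfrak{m}_{k}}(f,2B_{k}(y))\bigr)^{p}\,d\mu(y)\le C\,\mathcal{BSN}_{p,\{\mathfrak{m}_{k}\},c}(f)^{p}.
\end{equation*}

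\emph{Construction of the family.} At each scale $k$ I take a maximal $3\cdot 2^{-k}$-separated subset $\{y_{k,\alpha}\}\subset S^{1,2}_{k}$. Proposition \ref{Prop.oscillation_comparison} and doubling let me replace the $k$-th integral in $\Lambda$ by $\sum_{\alpha}\mu(B_{k}(y_{k,\alpha}))\,\mathcal{E}_{\mathfrak{m}_{k}}(f,C'B_{k}(y_{k,\alpha}))^{p}$. Each summand already has the BSN shape $\mu(B)/r_{B}^{p}\cdot(\mathcal{E}_{\mathfrak{m}_{k(B)}}(f,cB))^{p}$, with $B$ centered on $S^{1}\subset S$. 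To build a single $(S,c)$-nice family I invoke the $\sigma_{2}(S)$-porosity of $S^{2}$ (Remark \ref{Rem.measure_plus_porosity}): since $y_{k,\alpha}\in S^{1,2}_{k}$ gives $B_{k}(y_{k,\alpha})\cap S^{2}\ne\emptyset$, porosity supplies an inscribed ball $B^{W}_{k,\alpha}\subset B_{k}(y_{k,\alpha})\setminus S^{2}$ with $r(B^{W}_{k,\alpha})\ge \sigma_{2}(S)\,2^{-k}$, which after a harmless shrinking by a factor $2$ may be assumed to satisfy $r(B^{W}_{k,\alpha})\approx \operatorname{dist}(B^{W}_{k,\alpha},S^{2})\approx 2^{-k}$. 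For $c\ge \underline{c}:=2/\sigma_{2}(S)$ the dilation $cB^{W}_{k,\alpha}$ contains $y_{k,\alpha}$, hence meets $S$, so the family is $(S,c)$-nice once disjointness is established.

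\emph{The main obstacle and the conclusion.} The principal difficulty is ensuring that the multi-scale collection $\{B^{W}_{k,\alpha}\}_{k,\alpha}$ is pairwise disjoint. Within each scale $k$ this follows from the $3\cdot 2^{-k}$-separation of centers. Across scales, the Whitney-type relation $r(B^{W}_{k,\alpha})\approx \operatorname{dist}(B^{W}_{k,\alpha},S^{2})\approx 2^{-k}$ plays the decisive role: points of $B^{W}_{k,\alpha}$ remain at distance of order $2^{-k}$ from $S^{2}$, while points of $B^{W}_{k',\alpha'}$ with $k'\gg k$ remain much closer to $S^{2}$, so balls whose scales differ by more than a universal constant cannot overlap. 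The bounded-multiplicity overlaps between neighbouring scales are then removed by a standard coloring argument partitioning the collection into finitely many disjoint $(S,c)$-nice subfamilies, absorbing a universal constant. Once this is in place, the $\mathcal{BSN}$-energy of each $B^{W}_{k,\alpha}$ matches the discrete summand in $\Lambda$ via the Ahlfors--David $0$-regularity of $S^{1}$, the doubling of $\mu$, and one further application of Proposition \ref{Prop.oscillation_comparison} to absorb the dilation constants; summing over the finitely many colors then completes the proof.
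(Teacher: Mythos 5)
Your proposal is essentially correct and follows the same architecture as the paper's proof: reduce $\mathcal{GL}^{(3)}_{p}(f)^p$ to a single-parameter family of $\mathfrak{m}_{k}$-oscillations via Lemma \ref{Lm.estimate_double_averaging_1} and Proposition \ref{Prop.integral_of_weights}, discretize over a $\approx 2^{-k}$-net, use porosity of $S^{2}$ to produce inscribed Whitney-type balls avoiding $S^{2}$, exploit the resulting dist-to-$S^{2}$ separation across scales together with a finite coloring to extract $(S,c)$-nice subfamilies, and absorb the finitely many colors into the $\mathcal{BSN}$-supremum. The only substantive difference is where the first reduction lands: the paper integrates out the $S^{1}$-variable and arrives at $\sum_{k}2^{k(p-\theta)}\int_{S^{2}}(\mathcal{E}_{\mathfrak{m}_{k}}(f,B_{k}(y)))^{p}\,d\mathfrak{m}_{k}(y)$, so it discretizes over a net $Z_{k}(S^{2})\subset S^{2}$ and applies the porosity of $S^{2}$ directly at its own points; you integrate out the $S^{2}$-variable and arrive at your $\Lambda$ over $S^{1,2}_{k}\subset S^{1}$, so your net lives on $S^{1}$ and you must invoke the extension in Remark \ref{Rem.measure_plus_porosity} that porosity of $S^{2}$ propagates (with the factor $2\sigma_{2}/3$) to every sufficiently small ball, not just those centered on $S^{2}$. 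Both reductions are legitimate, since for each ordered pair $(i,j)$ Lemma \ref{Lm.estimate_double_averaging_1} lets you center the controlling ball on either $y$ or $z$; the paper's choice is a touch more economical because it bypasses that remark. A small quibble: your $\underline{c}=2/\sigma_{2}(S)$ is only tuned to make $cB^{W}_{k,\alpha}$ contain $y_{k,\alpha}$; to carry out the final energy comparison you also need the enlarged ball $2cB^{W}_{k,\alpha}$ to absorb the dilated ball $C'B_{k}(y_{k,\alpha})$ coming from the discretization step, so the threshold must be taken somewhat larger (the paper uses $3/\sigma_{2}$ from a tighter setup). This is a constants issue, not a gap.
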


\begin{proof}
We split the proof into several steps. Given $k \in \mathbb{N}_{0}$, let $Z_{k}(S^{2})$ be an arbitrary 
maximal $2^{-k}$-separated subset of $S^{2}$ with the corresponding index set $\mathcal{A}_{k}(S^{2})$, i.e., 
$$
Z_{k}(S^{2})=\{z_{k,\alpha}:\alpha \in \mathcal{A}_{k}(S^{2})\}.
$$

\textit{Step 1.}
Arguing as in the proof of Theorem \ref{Th.2} we have
\begin{equation}
\label{eqq.4.55}
\begin{split}
\Bigl(\mathcal{GL}^{(3)}_{p}(f)\Bigr)^{p} \le C \sum\limits_{k=0}^{\infty}2^{k(p-\theta)}\int\limits_{S^{2}}\Bigl(\mathcal{E}_{\mathfrak{m}_{k}}(f,B_{k}(y))\Bigr)^{p}\,d\mathfrak{m}_{k}(y).
\end{split}
\end{equation}

\textit{Step 2.}
Since $S^{2}$ is $\sigma_{2}:=\sigma_{2}(S)$-porous, given $k \in \mathbb{N}_{0}$ and $\alpha \in \mathcal{A}_{k}(S^{2})$, there is a ball $\widehat{B}_{k,\alpha} \subset B_{k,\alpha} \setminus S^{2}$ with $r(\widehat{B}_{k,\alpha}) \geq \sigma_{2}r(B_{k,\alpha})$.
Hence, $\frac{3}{\sigma_{2}}\widehat{B}_{k,\alpha} \supset B_{k}(y)$ for all $y \in B_{k,\alpha} \cap S^{2}$.
This inclusion in combination with Proposition \ref{Prop.oscillation_comparison} implies
\begin{equation}
\label{eqq.4.56}
\mathcal{E}_{\mathfrak{m}_{k}}(f,B_{k}(y)) \le C \mathcal{E}_{\mathfrak{m}_{k}}(\frac{3}{\sigma_{2}}\widehat{B}_{k,\alpha},f) \quad \text{for all} \quad y \in B_{k,\alpha} \cap S^{2}.
\end{equation}

\textit{Step 3.}
By \eqref{eqq.doubling} and \eqref{eqq.4.56} it is clear that
\begin{equation}
\label{eqq.4.57}
\begin{split}
&\int\limits_{S^{2}}\Bigl(\mathcal{E}_{\mathfrak{m}_{k}}(f,B_{k}(y))\Bigr)^{p}\,d\mathfrak{m}_{k}(y) \le \sum\limits_{\alpha \in \mathcal{A}_{k}(S^{2})}\int\limits_{B_{k,\alpha} \cap S^{2}}\Bigl(\mathcal{E}_{\mathfrak{m}_{k}}(f,B_{k}(y))\Bigr)^{p}\,d\mathfrak{m}_{k}(y)\\
&\le C \sum\limits_{\alpha \in \mathcal{A}_{k}(S^{2})} 2^{k\theta}\mu(\widehat{B}_{k,\alpha})\mathcal{E}_{\mathfrak{m}_{k}}(\frac{3}{\sigma_{2}}\widehat{B}_{k,\alpha},f).
\end{split}
\end{equation}

\textit{Step 4.}
By Lemma 7.3 in \cite{T5} there is a constant $N_{1} \in \mathbb{N}_{0}$ such that, for each $k \in \mathbb{N}$, the family
$\{\widehat{B}_{k,\alpha}:\alpha \in \mathcal{A}_{k}(S^{2})\}$ can be decomposed into at most $N_{1}$ disjoint subfamilies.
Furthermore, since $\operatorname{dist}(\frac{1}{2}\widehat{B}_{k,\alpha},S^{2}) \geq \frac{\sigma_{2}}{2}2^{-k}$ there is a constant $N_{2} \in \mathbb{N}$ depending on $\sigma_{2}$ only
such that, for each $k \in \mathbb{N}_{0}$ and $\alpha \in \mathcal{A}_{k}(S^{2})$, every ball $\frac{1}{2}\widehat{B}_{k,\alpha}$ with $\alpha \in \mathcal{A}_{k}(S^{2})$ does not meet
any ball $\frac{1}{2}\widehat{B}_{k+N_{2},\beta}$, $\beta \in \mathcal{A}_{k+N_{2}}(S^{2})$.
As a result, there is $i \in \{1,...,N_{2}\}$ and a sequence of index sets $\{\mathcal{J}_{k}\}_{k=0}^{\infty}$ such that $\mathcal{J}_{k} \subset \mathcal{A}_{i+kN_{2}}$, $k \in \mathbb{N}$
and
\begin{equation}
\begin{split}
\label{eqq.4.58}
&\sum\limits_{k=0}^{\infty}2^{k(p-\theta)} \sum\limits_{\alpha \in \mathcal{A}_{k}(S^{2})} 2^{k\theta}\mu(\widehat{B}_{k,\alpha})\mathcal{E}_{\mathfrak{m}_{k}}(\frac{3}{\sigma_{2}}\widehat{B}_{k,\alpha},f)\\
&\le N_{1}N_{2}\sum\limits_{k=0}^{\infty}2^{(i+kN_{2})p}
\sum\limits_{\alpha \in \mathcal{J}_{k}} \mu(\widehat{B}_{i+kN_{2},\alpha})\mathcal{E}_{\mathfrak{m}_{i+kN_{2}}}(\frac{3}{\sigma_{2}}\widehat{B}_{i+kN_{2},\alpha},f).
\end{split}
\end{equation}

\textit{Step 5.} Taking into account that the family $\mathcal{B}:=\{B_{i+kN_{2},\alpha}: k \in \mathbb{N}_{0}, \alpha \in \mathcal{J}_{k}\}$ is disjoint, combining \eqref{eqq.4.55}, \eqref{eqq.4.57}, \eqref{eqq.4.58}
and letting $\underline{c}=\frac{3}{\sigma_{2}}$ we arrive at the required estimate.
%and complete the proof.

The proof is complete.

\end{proof}

\textit{The main result} of this subsection is as follows.
\begin{Ca}
\label{Ca.2}
Let $\sigma \in (0,\sigma_{2}(S)]$. A function $f \in \cap_{i=1}^{2}L_{p}(\mathcal{H}_{\theta_{i}}\lfloor_{S^{i}})$ belongs to the space $W_{p}^{1}|_{S}^{\mathfrak{m}_{0}}$ if and only if $f \in  \operatorname{B}^{1-\frac{\theta_{2}}{p}}_{p}(\mathcal{H}_{\theta_{2}}\lfloor_{S^{2}})$, $f^{\sharp}_{\mu\lfloor_{S^{1}}} \in L_{p}(S,\mu)$ and $\mathcal{GL}^{(3)}_{p}(f) < +\infty$.
Furthermore,
\begin{equation}
\|f|W_{p}^{1}(\operatorname{X})|_{S}^{\mathfrak{m}_{0}}\| \approx \|f|L_{p}(S^{1},\mu)\|+\|f^{\sharp}_{\mu\lfloor_{S^{1}}}|L_{p}(S^{1},\mu)\|+\|f|\operatorname{B}_{p}^{1-\frac{\theta_{2}}{p}}(S^{2})\|+\mathcal{GL}^{(3)}_{p}(f),
\end{equation}
where the equivalence constants do not depend on $f$.

Finally, there exists an $\mathfrak{m}_{0}$-extension operator $\operatorname{Ext}_{S,\{\mathfrak{m}_{k}\}} \in \mathcal{L}(W_{p}^{1}(\operatorname{X})|_{S}^{\mathfrak{m}_{0}},W_{p}^{1}(\operatorname{X}))$.
\end{Ca}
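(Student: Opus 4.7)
The plan is to reduce the statement to the already proven Theorems~\ref{Th.3} and~\ref{Th.4} via the abstract criterion of Theorem~\ref{Th.main}, and then close the loop by dominating each summand on the right-hand side by the trace norm.

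First I would verify that the sequence $\{\mathfrak{m}_k\}$ defined in~\eqref{eqq.special_sequence_2} belongs to $\mathfrak{M}^{str}_{\theta}(S)$ with $\epsilon=1/2$, which follows from Proposition~\ref{Prop.special_regular_measures} applied to each of the two summands together with the elementary fact that the sum of two strongly $\theta$-regular sequences supported on $S^1$ and $S^2$ is strongly $\theta$-regular on $S=S^1\cup S^2$. Fixing $c\ge \max\{6,\underline{c}\}$ with $\underline{c}$ from Theorem~\ref{Th.4} and $\sigma\in(0,\min\{\sigma_2(S),1/(16c)\}]$, Theorem~\ref{Th.main} yields
\[
\|f|W_p^1(\operatorname{X})|_S^{\mathfrak{m}_0}\| \approx \operatorname{BSN}_{p,\{\mathfrak{m}_k\},c}(f) \approx \operatorname{BN}_{p,\{\mathfrak{m}_k\},\sigma}(f).
\]
The upper bound $\|f|W_p^1(\operatorname{X})|_S^{\mathfrak{m}_0}\| \le C\bigl(\text{right-hand side}\bigr)$ then follows at once from Theorem~\ref{Th.3}, the $L_p$-splitting of Proposition~\ref{Prop.different_lp_norms}, and the trivial estimate $\|f|L_p(\mathcal{H}_{\theta_2}\lfloor_{S^2})\| \le \|f|\operatorname{B}_p^{1-\theta_2/p}(S^2)\|$.

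The reverse inequality requires treating the four summands separately. The bound $\|f|L_p(S^1,\mu)\| \le C\|f|L_p(\mathfrak{m}_0)\|$ is immediate from Proposition~\ref{Prop.different_lp_norms}, while $\mathcal{GL}^{(3)}_p(f) \le C\,\mathcal{BSN}_{p,\{\mathfrak{m}_k\},c}(f)$ is precisely Theorem~\ref{Th.4}. For the Calder\'on-type term I would establish the pointwise estimate $f^{\sharp}_{\mu\lfloor_{S^1}}(x) \le C\,f^{\sharp}_{\{\mathfrak{m}_k\}}(x)$ for $x \in S^1$: since $S^1 \in \mathcal{ADR}_0(\operatorname{X})$ and $x \in S^1$, conditions~(\textbf{M}2)--(\textbf{M}3) combined with the doubling of $\mu$ force $\mathfrak{m}_{k(r)}(B_r(x)) \approx 2^{k(r)\theta}\mu(B_r(x)\cap S^1)$ for $r \in (0,1]$, from which $\mathcal{E}_{\mu\lfloor_{S^1}}(f,B_r(x)) \le C\,\mathcal{E}_{\mathfrak{m}_{k(r)}}(f,B_{2r}(x))$ follows by using $S^1$-mass as a lower bound on $\mathfrak{m}_{k(r)}$-mass; $L_p$ integration over $S^1$ then bounds $\|f^{\sharp}_{\mu\lfloor_{S^1}}|L_p(S^1,\mu)\|$ by $\operatorname{BN}_{p,\{\mathfrak{m}_k\},\sigma}(f)$. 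For the Besov piece on $S^2$, I would invoke Lemma~\ref{Lm.estimate_double_averaging_1} with $i=j=2$ and $y=z=x\in S^2$ to obtain $\mathcal{E}_{\mathcal{H}_{\theta_2}\lfloor_{S^2}}(f,B_k(x)) \le C\,\mathcal{E}_{\mathfrak{m}_k}(f,B_k(x))$, and exploit $d\mathcal{H}_{\theta_2}=2^{-k(\theta-\theta_2)}d\mathfrak{m}_k\lfloor_{S^2}$ to recast $(\mathcal{BN}^{1-\theta_2/p}_p(f))^p$ as a multiple of $\sum_k 2^{k(p-\theta)}\int_{S^2}(\mathcal{E}_{\mathfrak{m}_k}(f,B_k(y)))^p\,d\mathfrak{m}_k(y)$; this is precisely the expression controlled by $\mathcal{BSN}_{p,\{\mathfrak{m}_k\},c}(f)$ in Steps~2--5 of the proof of Theorem~\ref{Th.4}.

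The main obstacle is this last Besov step: because $S$ itself need not be porous, Proposition~\ref{Prop.estimate_simplebesov_by_modifiedbesov} is unavailable and the $\operatorname{BN}$-norm route from Corollary~\ref{Ca.1} is blocked. The workaround is the $\sigma_2$-porous ball construction from Theorem~\ref{Th.4}: for each maximally $2^{-k}$-separated point in $S^2$ one fills the surrounding ball $B_{k,\alpha}$ with an interior porous ball $\widehat{B}_{k,\alpha} \subset B_{k,\alpha}\setminus S^2$ (which may meet $S^1$), assembles an $(S,c)$-nice family after a disjointification, and absorbs the $S^2$-oscillations into the Brudnyi--Shvartsman functional. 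Gathering the four bounds and invoking Theorem~\ref{Th.main} yields the stated norm equivalence; the existence of the bounded $\mathfrak{m}_0$-extension operator is a built-in consequence of Theorem~\ref{Th.main} (obtained via the construction in~\cite{T5}).
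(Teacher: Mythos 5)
Your proposal is correct, but it takes a noticeably more hands-on route than the paper for the lower bound. The paper's proof dispatches the first three terms of the right-hand side in one stroke: if $F\in W_p^1(\operatorname{X})$ is an $\mathfrak{m}_0$-extension of $f$, then the trace compatibility relations $\chi_{S^2}F|_S^{\mathfrak{m}_0}=F|_{S^2}^{\mathcal{H}_{\theta_2}}$ and $\chi_{S^1\setminus S^2}F|_S^{\mathfrak{m}_0}=\chi_{S^1\setminus S^2}F|_{S^1}^{\mu}$ allow one to apply Theorem~\ref{Th.main} \emph{separately} on $S^1$ (with the sequence $\{2^{k\theta}\mu\}$) and on $S^2$ (with $\{2^{k(\theta-\theta_2)}\mathcal{H}_{\theta_2}\lfloor_{S^2}\}$). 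Since each $S^i$ is itself Ahlfors--David regular, the intrinsic characterizations on each piece immediately yield $\|f|L_p(S^1,\mu)\|+\|f^\sharp_{\mu\lfloor_{S^1}}|L_p(S^1,\mu)\|+\|f|\operatorname{B}_p^{1-\theta_2/p}(S^2)\| \le C\|F|W_p^1(\operatorname{X})\|$, and only the gluing term $\mathcal{GL}^{(3)}_p(f)$ requires the dedicated Theorem~\ref{Th.4}. You instead prove each bound from the $\operatorname{BN}$/$\operatorname{BSN}$ functional of $f$ on the full set $S$: a pointwise domination $f^\sharp_{\mu\lfloor_{S^1}}\le C f^\sharp_{\{\mathfrak{m}_k\}}$ on $S^1$ (valid by the Ahlfors regularity of $S^1$ and condition~(\textbf{M}2)), and a re-run of the $\sigma_2$-porous ball construction from Steps~2--5 of Theorem~\ref{Th.4} to control the Besov norm on $S^2$; both of these work because, as you observe, $\mathfrak{m}_k\lfloor_{S^2}=2^{k(\theta-\theta_2)}\mathcal{H}_{\theta_2}\lfloor_{S^2}$ (since $\mu(S^2)=0$). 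The paper's route is shorter and more conceptual, showing that everything but the gluing functional drops out automatically from the one-piece trace theory; your route is self-contained at the level of the estimates, at the cost of reproducing part of the argument in Theorem~\ref{Th.4}. Both are sound, and both correctly attribute the extension operator to Theorem~\ref{Th.main}.
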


\begin{proof}
Applying Theorem \ref{Th.main} firstly for the set $S^{1}$ with the sequence of measures
$\{\mathfrak{m}^{1}_{k}\}:=\{2^{k\theta}\mu\}$, and secondly for the set $S^{2}$ with the sequence of measures $\{\mathfrak{m}^{2}_{k}\}:=\{2^{k(\theta-\theta_{2})}\mathcal{H}_{\theta_{2}}\lfloor_{S^{2}}\}$, we obtain the existence of a constant $C > 0$ such that
\begin{equation}
\notag
\|f|L_{p}(S^{1},\mu)\|+\|f^{\sharp}_{\mu\lfloor_{S^{1}}}|L_{p}(S^{1},\mu)\| + \|f|\operatorname{B}_{p}^{1-\frac{\theta_{2}}{p}}(S^{2})\| \le C\|f|W_{p}^{1}(\operatorname{X})|_{S}^{\mathfrak{m}_{0}}\|.
\end{equation}
Furthermore, it is clear that $\chi_{S^{2}}F|_{S}^{\mathfrak{m}_{0}}=F|_{S^{2}}^{\mathcal{H}_{\theta_{2}}}$ and $\chi_{S^{1} \setminus S^{2}}F|_{S}^{\mathfrak{m}_{0}}=\chi_{S^{1} \setminus S^{2}}F|_{S^{1}}^{\mu}$.
The proof concludes by combining the above observations with Theorems \ref{Th.3}, \ref{Th.4}.
\end{proof}

\subsection{Concluding remarks} It is worth to note that, in particular, our results are applicable to the situation when $S=\cup_{i=1}^{N}S^{i}$
and $S^{N} \subset ... \subset S^{1}=S$. Informally speaking, in this case we characterize the trace space of the $W_{p}^{1}(\operatorname{X})$ to the set $S^{1}$
with ``different accuracy'' on different pieces of $S$. Surprisingly, such cases have never been considered in the literature.

Note also that the concrete construction of the weights given in \eqref{eqq.weight} is irrelevant. Indeed, one can use other weights $\widetilde{\operatorname{w}}_{k}$ in all main 
results of the present paper. 
The only requirement is that, given $c \geq 1$, there exists a constant $C > 0$ such that the following inequalities
$$
C^{-1}\widetilde{\operatorname{w}}_{k}(y,z) \le \operatorname{w}_{k}(y,z) \le C \widetilde{\operatorname{w}}_{k}(y,z)
$$ 
hold for each $k \in \mathbb{N}_{0}$ for all $(y,z) \in \operatorname{X} \times \operatorname{X}$ satisfying $\operatorname{d}(y,z) \le c 2^{-k}$. For example, for each $k \in \mathbb{N}_{0}$, 
one can take
$$
\widetilde{\operatorname{w}}_{k}(y,z):=\frac{1}{2}\Bigl(\frac{1}{\mu(B_{k}(y))}+\frac{1}{\mu(B_{k}(z))}\Bigr), \quad (y,z) \in \operatorname{X} \times \operatorname{X}.
$$

%We believe that the results of that kind will be useful in boundary value problems for partial differential equations.

%
%
\end{document}